\newcommand{\gp}{\Gamma}
\newcommand{\lat}[1][\gp]{\Lambda_{#1}}
\newcommand{\hol}[1][\gp]{H_{#1}}
\newcommand{\RR}{\mathbb{R}}
\newcommand{\CC}{\mathbb{C}}
\newcommand{\HH}{\mathbb{H}}
\newcommand{\ZZ}{\mathbb{Z}}
\newcommand{\QQ}{\mathbb{Q}}
\newcommand{\KK}{\mathbb{K}}
\DeclareMathOperator\Aut{Aut}
\DeclareMathOperator\Aff{Aff}
\DeclareMathOperator\AffDiff{AffDiff}
\DeclareMathOperator\Iso{Iso}
\DeclareMathOperator\End{End}
\DeclareMathOperator\Fix{Fix}
\DeclareMathOperator\GLin{GL}
\DeclareMathOperator\Ort{O}
\DeclareMathOperator\Mat{Mat}
\DeclareMathOperator\Gal{Gal}
\DeclareMathOperator\Diff{Diff}
\DeclareMathOperator\Out{Out}
\DeclareMathOperator\Id{Id}
\DeclareMathOperator\Tr{Tr}
\DeclareMathOperator\MCG{MCG}
\DeclareMathOperator\Hom{Hom}
\newcommand{\GL}[2][n]{\GLin_{#1}(#2)}
\newcommand{\OO}[2][n]{\Ort_{#1}(#2)}
\newcommand{\Mflat}[1][\mathcal{O}]{\mathcal{M}_{\textup{flat}}(#1)}
\newcommand{\Tflat}[1][\mathcal{O}]{\mathcal{T}_{\textup{flat}}(#1)}
\newcommand{\Rflat}[1][\mathcal{O}]{\mathcal{R}_{\textup{flat}}(#1)}
\newcommand{\nmlr}[2][\hol]{\mathrm{N}_{#2}\mathopen{}\left(#1\right)\mathclose{}}
\newcommand{\ctlr}[2][\hol]{\mathrm{C}_{#2}\mathopen{}\left(#1\right)\mathclose{}}
\newcommand{\tNG}{\mathcal{N}_\Gamma}
\newtheorem{thm}{Theorem}[section]
\newtheorem{prop}[thm]{Proposition}
\newtheorem{lem}[thm]{Lemma}
\newtheorem{cor}[thm]{Corollary}
\newtheorem{thmx}{Theorem}
\newtheorem{corx}[thmx]{Corollary}
\theoremstyle{definition}
\newtheorem{definition}[thm]{Definition}
\newtheorem{sta}[thm]{Standing Assumption}
\theoremstyle{remark}
\newtheorem{rmk}[thm]{Remark}
\title{Moduli spaces of flat Riemannian metrics on orbifolds}
\author{Karla Garc\'ia}
\address[K. Garc\'ia]{
Departamento de Matem\'aticas, Facultad de Ciencias, Universidad Nacional Autónoma de M\'exico, Circuito Exterior, Ciudad Universitaria, 04510 Coyoac\'an, Ciudad de M\'exico, Mexico}
\email{ohmu@ciencias.unam.mx}
\author{Ingrid Amaranta Membrillo Solis}
\address[I. A. Membrillo Solis]{School of Mathematical Sciences, Queen Mary University of London,  327 Mile End Road, London E1 4NS, United Kingdom}
\email{i.a.membrillosolis@qmul.ac.uk}
\author{Motiejus Valiunas}
\address[M. Valiunas]{Instytut Matematyczny, Uniwersytet Wroc{\l}awski, plac Grunwaldzki 2, 50-384 Wroc{\l}aw, Poland}
\email{motiejus.valiunas@math.uni.wroc.pl}
\date{}
\keywords{Flat orbifolds, moduli spaces, mapping class groups, crystallographic groups}
\subjclass{57R18 (primary), 20H15, 57S05 (secondary)}
\begin{document}

\begin{abstract}
    We study moduli spaces of flat metrics on closed Riemannian orbifolds admitting such metrics.  We show that for such orbifolds $\mathcal{O}$, the Teichm\"uller space of flat metrics $\Tflat$ serves as a classifying space for proper actions of the mapping class group, $\MCG(\mathcal{O})$, in the appropriate category.  We show that the moduli space of flat metrics, $\Mflat = \Tflat / \MCG(\mathcal{O})$, is itself a very good orbifold, and, under a technical assumption, describe this orbifold algebraically up to commensurability.
\end{abstract}
\maketitle

\section{Introduction}

Let $S_g$ be a surface of finite genus $g$, possibly with punctures and non-empty boundary.  When $S_g$ admits a hyperbolic metric, that is, a Riemannian metric of constant negative curvature, the rich geometry of such structures gives rise to the classical Teichm\"uller space $\mathcal{T}_{\mathrm{hyp}}(S_g)$, which parametrizes hyperbolic metrics on $S_g$ up to isotopy. The mapping class group $\MCG(S_g)$, defined as the group of isotopy classes of self-diffeomorphisms of $S_g$, acts naturally on $\mathcal{T}_{\mathrm{hyp}}(S_g)$, and the quotient is the moduli space of hyperbolic structures. These spaces have been extensively studied due to their deep connections to low-dimensional topology, geometric group theory, and algebraic geometry; see, for example, \cite{Farb-Margalit, imayoshi2012, hubbard2016} and the references therein.

A key advantage of the aforementioned spaces is that they admit rich algebraic structures, for example, structures of complex analytic or algebraic varieties. In this work, we consider an analogous setting for \emph{flat metrics}, that is, Riemannian metrics with zero sectional curvature. It turns out that the spaces of such metrics, when defined appropriately, admit a structure similar in spirit to that of the hyperbolic case. However, it is important to note that the techniques used to study spaces of hyperbolic metrics are highly specific to two-dimensional surfaces. In contrast, the study of spaces of negatively curved metrics on higher-dimensional manifolds involves substantially different tools and ideas; for an introduction, see~\cite{Tuschmann-Wraith}.

The motivation behind this work is to construct and analyze analogues of Teichm\"uller and moduli spaces in the flat setting. One of the main benefits of working with flat metrics is that the associated mapping class groups, as well as the corresponding Teichm\"uller and moduli spaces, can be naturally defined not only for closed flat manifolds but also for closed flat orbifolds. These orbifolds, which generalize manifolds by allowing controlled singularities, arise naturally in the context of flat geometry and broaden the scope of the theory.

Locally modeled on quotients of Euclidean space by finite group actions, orbifolds preserve much of the analytic and geometric structure of manifolds. They were first introduced by Satake \cite{Satake1956}, and later developed further by Thurston \cite{thurston2022} in the context of 3-manifold topology and geometric structures. Orbifolds arise naturally in several areas of mathematics and physics, including string theory, crystallography, and equivariant topology.  In the Riemannian setting, orbifolds support much of the analytic structure familiar from manifolds. Classical tools such as elliptic regularity, Hodge theory, and index theory extend naturally to orbifolds \cite{Kawasaki1979, kawasaki1981}. Heat kernel asymptotics and spectral theory have been developed in this context~\cite{dryden2008, gittins2024}, and geometric flows such as Ricci flow have been successfully adapted to orbifolds~\cite{kleiner2011}.

Motivated by this broad framework, in this paper we study closed flat orbifolds $\mathcal{O}$, as well as the Teichm\"uller and moduli spaces of flat metrics, and the mapping class groups associated with them. The geometric framework that we use is the best to generalize the notion of moduli space of flat metrics over closed manifolds to closed orbifolds. It follows from the Bieberbach Theorems (see Theorem~\ref{thm:bieb}) that such orbifolds of dimension~$n$ can be described as quotients $\mathcal{O} = \Gamma \backslash \RR^n$, where $\Gamma$ is a crystallographic group which is a discrete cocompact subgroup of the group $\Aff(n)$ of affine linear transformations of $\RR^n$.  We have $\Aff(n) \cong \RR^n \rtimes \GL{\RR}$, and we write $\tau\colon \Aff(n) \to \GL{\RR}$ for the canonical projection map.  We call $\hol \coloneqq \tau(\Gamma)$ and $\lat \coloneqq \ker(\tau) \cap \Gamma \leq \ker(\tau) \cong \RR^n$ the \emph{holonomy} and the \emph{translation lattice} of $\Gamma$, respectively.

For a closed flat orbifold $\mathcal{O}$, we consider its mapping class group $\MCG(\mathcal{O})$ in the category of affine diffeomorphisms, as well as its Teichm\"uller space of flat metrics, $\Tflat$ (see Sections \ref{ssec:prelims-modsp} and \ref{ssec:prelims-mcg} for details). Our first result gives an algebraic description of $\MCG(\mathcal{O})$ up to taking quotients by finite subgroups, and shows that $\Tflat$ is a classifying space for the family of finite subgroups of $\MCG(\mathcal{O})$.  The latter result is motivated by a broader interest in equivariant topology and geometric group theory, where classifying spaces for families of subgroups play a fundamental role. Given a group $G$ and a family $\mathcal{F}$ of subgroups (such as the family of finite subgroups), a classifying space $E_{\mathcal{F}}(G)$ is a $G$-CW complex whose isotropy groups lie in $\mathcal{F}$ and which is terminal among such spaces up to $G$-homotopy equivalence. These spaces generalize the universal space $EG$ for free $G$-actions and are central tools in the computation of group homology and algebraic $K$- and $L$-theory, particularly in relation to the Farrell--Jones conjecture (see \cite{luck2005, luck-reich2005}).

In our context, identifying $\Tflat$ as a model for $E_{\mathcal{F}_{\text{fin}}}(\MCG(\mathcal{O}))$ has both topological and algebraic implications. It enables the study of $\MCG(\mathcal{O})$-equivariant structures on moduli spaces and relates geometric structures on $\mathcal{O}$ to homotopy-theoretic invariants. Analogous results are known in the surface case: Kerckhoff \cite{Kerckhoff} showed that the Teichm\"uller space $\mathcal{T}_{\text{hyp}}(S_g)$ of hyperbolic metrics on a closed surface $S_g$ is a model for $E_{\mathcal{F}_{\text{fin}}}(\MCG(S_g))$, as discussed further in \cite[\S 4.10]{luck2005}. Thus, the identification of $\Tflat$ as a classifying space enriches the understanding of orbifold mapping class groups, offering a bridge between affine geometric structures and the homotopy theory of group actions.

For a group $G$ and a subgroup $H \leq G$, we denote the normalizer and the centralizer of $H$ in $G$ by $\nmlr[H]{G}$ and $\ctlr[H]{G}$, respectively.

\begin{thmx} \label{thm:classifying}
Let $\Gamma \leq \Aff(n)$ be a crystallographic group with holonomy $\hol = \tau(\Gamma)$, and let $\mathcal{O} = \Gamma\backslash\RR^n$ be the corresponding closed flat orbifold. Then the following hold:
\begin{enumerate}
\item \label{it:class-cocomens} There is an isomorphism 
\[
\MCG(\mathcal{O})/G \cong \tau(\nmlr[\Gamma]{\Aff(n)})/\hol
\]
for some finite normal subgroup $G \unlhd \MCG(\mathcal{O})$, and this isomorphism respects the actions of $\MCG(\mathcal{O})$ and of $\tau(\nmlr[\Gamma]{\Aff(n)})$ on $\Tflat$.
\item \label{it:class-EG} The space $\Tflat$ of flat metrics on $\mathcal{O}$ is model for a classifying space for proper actions of $\tau(\nmlr[\Gamma]{\Aff(n)})$ and of $\MCG(\mathcal{O})$. 
\end{enumerate}
\end{thmx}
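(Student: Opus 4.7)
The plan is to use a single geometric picture for both parts: flat metrics on $\mathcal{O}$ correspond to $\hol$-invariant positive-definite inner products on $\RR^n$, and affine self-diffeomorphisms of $\mathcal{O}$ correspond to elements of $\nmlr[\Gamma]{\Aff(n)}$ modulo $\Gamma$. For part (i), I would first realise the algebraic map explicitly: any $\alpha \in \nmlr[\Gamma]{\Aff(n)}$ descends to an affine self-diffeomorphism $\bar\alpha$ of $\mathcal{O}$, and conversely every such map lifts, yielding $\nmlr[\Gamma]{\Aff(n)}/\Gamma \cong \AffDiff(\mathcal{O})$. The identity component of the Lie group $\AffDiff(\mathcal{O})$ is the torus of translations $(\RR^n)^\hol/\lat^\hol$ in the $\hol$-fixed subspace, and $\RR^n \cap \nmlr[\Gamma]{\Aff(n)} = (\RR^n)^\hol$ is exactly the kernel of $\tau$ restricted to the normalizer. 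Quotienting further by $\tau(\Gamma) = \hol$ gives
\[
\nmlr[\Gamma]{\Aff(n)}\bigl/\bigl(\Gamma \cdot (\RR^n)^\hol\bigr) \;\cong\; \tau\bigl(\nmlr[\Gamma]{\Aff(n)}\bigr)/\hol,
\]
and the left-hand side is $\MCG(\mathcal{O})/G$ for a finite normal $G$ that absorbs the discrepancy between affine-isotopy and the notion of isotopy used to define $\MCG(\mathcal{O})$. Finiteness of $G$ rests on a Bieberbach-type rigidity for orbifolds, asserting that any self-diffeomorphism of $\mathcal{O}$ is isotopic to an affine one up to a finite indeterminacy. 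Equivariance of the isomorphism with respect to the $\Tflat$-actions is automatic from the construction, since both actions come from linear changes of coordinates on $\RR^n$.

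\textbf{Part (ii).}
I would identify $\Tflat$ with the open convex cone $\mathcal{P}^\hol \subseteq \mathrm{Sym}^2(\RR^n)^{*,\hol}$ of $\hol$-invariant positive-definite symmetric bilinear forms on $\RR^n$: a flat metric on $\mathcal{O}$ lifts uniquely to a $\Gamma$-invariant, hence $\hol$-invariant, inner product, and the only affine diffeomorphisms isotopic to the identity are translations by vectors in $(\RR^n)^\hol$, which act trivially on inner products. Convexity of $\mathcal{P}^\hol$ immediately gives contractibility. The standard linear action of $\GL{\RR}$ on $\mathrm{Sym}^2(\RR^n)^*$ restricts to a linear action of $\tau(\nmlr[\Gamma]{\Aff(n)})$ on $\mathcal{P}^\hol$, since the normalizer condition preserves $\hol$-invariance up to conjugation; since $\hol$ itself acts trivially on $\mathcal{P}^\hol$, the action descends to $\tau(\nmlr[\Gamma]{\Aff(n)})/\hol$. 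Properness follows because $\tau(\nmlr[\Gamma]{\Aff(n)})$ is discrete in $\GL{\RR}$ (its $\tau$-kernel contains all normalizing translations) and metric stabilizers are contained in orthogonal subgroups, hence compact, so their intersection with this discrete group is finite. For any finite subgroup $F$, averaging an element of $\mathcal{P}^\hol$ over $F$ produces an $F$-invariant metric, and the fixed-point set $(\mathcal{P}^\hol)^F$ is a non-empty convex subset, hence contractible; this verifies the universal property of a classifying space for proper actions of $\tau(\nmlr[\Gamma]{\Aff(n)})/\hol$.

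\textbf{Transfer and main obstacle.}
Combining with part (i), the $\MCG(\mathcal{O})$-action on $\Tflat$ is a finite extension of the proper action of $\tau(\nmlr[\Gamma]{\Aff(n)})/\hol$; finite extensions preserve both properness (isotropy groups remain finite) and contractibility of fixed-point sets (an $F$-fixed set for $F \leq \MCG(\mathcal{O})$ coincides with the fixed set of the image of $F$ in $\MCG(\mathcal{O})/G$), so $\Tflat$ is also a model for the classifying space for proper actions of $\MCG(\mathcal{O})$. The main technical obstacle is the Bieberbach-type rigidity underlying part (i): producing the finite normal subgroup $G$ and identifying it intrinsically is not formal, and requires a careful orbifold analogue of the classical result that every self-diffeomorphism of a closed flat manifold is isotopic to an affine one.
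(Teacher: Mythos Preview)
Your treatment of part~(ii) is essentially the paper's: identifying $\Tflat$ with the convex cone of $\hol$-invariant inner products, checking properness via discreteness of $\tau(\nmlr[\Gamma]{\Aff(n)}) \leq \Aut(\lat)$, and proving contractibility of fixed sets by averaging. The transfer to $\MCG(\mathcal{O})$ via the finite kernel $G$ is also the paper's route.

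Part~(i), however, contains a genuine error. Your identification of the identity component $\AffDiff_0(\mathcal{O})$ with $(\RR^n)^{\hol}/\lat^{\hol}$ is correct (indeed $\ctlr[\Gamma]{\Aff(n)} = (\RR^n)^{\hol}$ as translations), but the subsequent claim that $\RR^n \cap \nmlr[\Gamma]{\Aff(n)} = (\RR^n)^{\hol}$ is false. A translation by $v$ normalises $\Gamma$ iff $v - Av \in \lat$ for every $A \in \hol$, which is strictly weaker than $Av = v$: for $\hol = \{\pm 1\}$ on $\RR$ with $\lat = \ZZ$, the translation by $1/2$ normalises $\Gamma$ but is not $\hol$-fixed. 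Consequently your displayed isomorphism $\nmlr[\Gamma]{\Aff(n)}/(\Gamma\cdot(\RR^n)^{\hol}) \cong \tau(\nmlr[\Gamma]{\Aff(n)})/\hol$ fails in general; the left side is already $\MCG(\mathcal{O})$ in the affine category, so under your identification $G$ would be trivial, which it is not.

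This also means your diagnosis of $G$ is off. The paper works entirely in the affine category, so no smooth-versus-affine rigidity is involved. The finite group $G$ is precisely the quotient $(\tau|_N)^{-1}(\hol)\big/\Gamma\,\ctlr[\Gamma]{\Aff(n)}$, where $N = \nmlr[\Gamma]{\Aff(n)}$: it measures normalising translations that are not in $\lat + (\RR^n)^{\hol}$. The paper proves finiteness by observing that the map $v \mapsto (hv - v)_{h \in \hol}$ embeds $\ker(\tau|_N)/\ctlr[\Gamma]{\Aff(n)}$ discretely into $\bigoplus_{h} \lat$, while this quotient already contains the cocompact lattice $\lat\,\ctlr[\Gamma]{\Aff(n)}/\ctlr[\Gamma]{\Aff(n)}$; comparing the two gives finite index. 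This purely algebraic argument is what you are missing in part~(i).
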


\begin{rmk}
    It follows by inspection of the proof that the group $G$ in Theorem~\ref{thm:classifying} can be identified with the quotient $\nmlr[\Gamma]{\Aff(n)} \cap \tau^{-1}(\hol) \big/ \Gamma\ctlr[\Gamma]{\Aff(n)}$: see Lemma~\ref{lem:MCG-quotients}.  This allows us to compute the mapping class group $\MCG(\mathcal{O})$ of a closed flat orbifold $\mathcal{O}$ in the affine category.  We note that in other categories, such as $\mathbf{Top}$, $\mathbf{PL}$ or $\mathbf{Diff}$, the mapping class group might be different: see Remark~\ref{rmk:depends-on-category}.
\end{rmk}

Let $\Gamma \leq \Aff(n)$ be a crystallographic group with holonomy $\hol$ and translation lattice $\lat$. Then $\hol$ is a finite group acting on $\lat \leq \RR^n$ by linear transformations, and therefore $\lat \otimes \QQ$ is a representation of $\hol$. We then have splittings into irreducible $\hol$-representations
\begin{equation} \label{eq:splitting} \tag{$\ast$}
\lat \otimes \QQ = \bigoplus_{i=1}^\ell \bigoplus_{j=1}^{m_i} V_{i,j} \qquad\text{and}\qquad V_{i,j} \otimes K_i = \bigoplus_{\sigma \in \Gal(K_i/\QQ)} U_{i,j}^\sigma
\end{equation}
for some number fields $K_1,\ldots,K_\ell \subset \RR$ (which we choose to be minimal), so that $V_{i,j} \cong V_{i',j'}$ (as $\hol$-representations) if and only if $i = i'$, and so that $U_{i,j} \otimes_{K_i} \RR$ is an irreducible $\hol$-representation (over $\RR$) of type $\KK_i \in \{ \RR,\CC,\HH \}$.

The next result describes the group $\tau(\nmlr[\Gamma]{\Aff(n)})$ up to commensurability, i.e.\ taking finite-index subgroups and finite extensions.  In the case of fundamental groups of orbifolds, commensurability classification has the following geometric interpretation: if $G = \pi_1(\mathcal{O})$ is commensurable with a group $G'$, then $G' \cong \pi_1(\mathcal{O}')$ for an orbifold $\mathcal{O}'$ that is commensurable, i.e.\ shares a finite orbifold cover, with $\mathcal{O}$.

Because of certain technical complications, we assume that all the irreducible components of the $\hol$-representation $\lat \otimes \CC$ have Schur index one over $\CC$.  This is equivalent to saying that any irreducible $\hol$-subrepresentation $U' \leq \lat \otimes \CC$ is realizable over the field $K' = \QQ(\chi')$, where $\chi'\colon \hol \to \CC$ is the character of $U'$.  We note that for many classical groups, the Schur indices of \emph{all} of their irreducible representations are either one or two: see \cite[\S 1]{Turull} and references therein.

We note that, even though the description of $\tau(\nmlr[\Gamma]{\Aff(n)})$ up to isomorphism would be more desirable, we restrict ourselves to a description up to commensurability because of certain complications.  For instance, the action of $\tau(\nmlr[\Gamma]{\Aff(n)})$ does not necessarily preserve the subspaces $\bigoplus_{j=1}^{m_i} V_{i,j}$ appearing above---see e.g.\ the remark 3.2 for the manifold $G_6$ in \cite{Karla4D}--- and such a lack of splittings prevents certain algebraic descriptions of these groups.

\begin{thmx} \label{thm:commensurable}
With the notation above, suppose that all the irreducible constituents of the $\hol$-representation $\Lambda_\Gamma \otimes \CC$ have Schur index one over $\CC$.  Then there exists an isomorphism $\ctlr{\GL{\RR}} \cong \bigoplus_{i=1}^\ell \GL[m_i]{\KK_i}^{[K_i:\QQ]}$, under which the subgroup $\tau(\nmlr[\Gamma]{\Aff(n)}) \leq \GL{\RR}$ is commensurable with the subgroup
\[
\bigoplus_{i=1}^\ell \left\{ (A^\sigma \mid \sigma \in \Gal(K_i/\QQ)) \:\middle|\:\vphantom{\big|} A \in \GL[m_i]{\mathfrak{o}_i} \right\} \leq \bigoplus_{i=1}^\ell \GL[m_i]{\KK_i}^{[K_i:\QQ]},
\]
where $\mathfrak{o}_i$ is an $\mathfrak{o}_{K_i}$-order in a division $K_i$-subalgebra $L_i \subseteq \KK_i$ satisfying the following properties:
\begin{itemize}
    \item the canonical map $L_i \otimes_{K_i} \RR \to \KK_i$ is an isomorphism,
    \item the $\Gal(K_i/\QQ)$-action on $K_i$ extends to an action on $L_i$ by ring automorphisms, and
    \item the $\Gal(K_i/\QQ)$-action on $L_i$ restricts to an action on $\mathfrak{o}_i$.
\end{itemize}
\end{thmx}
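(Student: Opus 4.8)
The plan is to identify $\tau(\nmlr[\Gamma]{\Aff(n)})$ with a group of $\hol$-equivariant linear automorphisms of $\lat$, and then use the representation-theoretic splitting \eqref{eq:splitting} together with Galois descent to exhibit it as an arithmetic group of the stated form. First I would observe that the normalizer $\nmlr[\Gamma]{\Aff(n)}$ consists of affine transformations conjugating $\Gamma$ to itself; since $\lat$ is the translation lattice of $\Gamma$ and can be recovered intrinsically (by Bieberbach, $\lat$ is the unique maximal normal abelian subgroup up to finite index, or the Fitting subgroup), any such conjugation preserves $\lat$ and descends to an automorphism of $\lat$ commuting with the $\hol$-action on $\lat \otimes \QQ$. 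Conversely, up to the ambiguity measured by $\ctlr[\Gamma]{\Aff(n)}$ and finite-index issues, each $\hol$-equivariant automorphism of $\lat$ lifts; so $\tau(\nmlr[\Gamma]{\Aff(n)})$ is commensurable with $\Aut_{\hol}(\lat) = \GLin(\lat) \cap \ctlr{\GL{\RR}}$, the integral points of the centralizer. This reduces the theorem to (a) computing $\ctlr{\GL{\RR}}$ from the representation theory, and (b) identifying an appropriate integral structure on it.

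For step (a), I would apply Schur's lemma over $\QQ$. The first splitting in \eqref{eq:splitting} groups the $V_{i,j}$ into isotypic blocks indexed by $i$, so $\End_{\hol}(\lat \otimes \QQ) = \bigoplus_{i=1}^\ell \Mat_{m_i}(D_i)$ where $D_i = \End_{\hol}(V_{i,1})$ is a division $\QQ$-algebra whose centre is the character field; under the Schur-index-one hypothesis over $\CC$ (equivalently, realizability over $K_i = \QQ(\chi_i)$), the algebra $D_i$ has centre $K_i$ and, after tensoring with $\RR$, becomes $\Mat_{m_i}(\KK_i)$ on each of the $[K_i:\QQ]$ real places corresponding to the Galois conjugates $U_{i,j}^\sigma$ in the second splitting. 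This yields the asserted isomorphism $\ctlr{\GL{\RR}} \cong \bigoplus_i \GL[m_i]{\KK_i}^{[K_i:\QQ]}$, with the $\Gal(K_i/\QQ)$-action on $L_i := D_i$ coming from the Skolem--Noether theorem (inner automorphisms realizing the Galois action on the centre $K_i$). One takes $L_i \subseteq \KK_i$ to be the image under any one real embedding, and $\mathfrak{o}_i$ an $\mathfrak{o}_{K_i}$-order in $L_i$ stabilized by the Galois action — e.g.\ the order generated by a Galois-stable $\ZZ$-basis of $\lat$ inside $D_i$, which exists because $\lat$ itself is Galois-stable and finitely generated over $\ZZ$.

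For step (b), the integral structure: $\Aut_{\hol}(\lat)$ is the stabilizer of the lattice $\lat$ inside $\ctlr{\GL{\RR}}$, and under the isomorphism above this lattice maps to a full $\ZZ$-lattice in $\bigoplus_i \Mat_{m_i}(L_i)$, commensurable (as lattices) with $\bigoplus_i \Mat_{m_i}(\mathfrak{o}_i)$. Two arithmetic subgroups defined by commensurable lattices are themselves commensurable, so $\Aut_{\hol}(\lat)$ is commensurable with $\bigoplus_i \GL[m_i]{\mathfrak{o}_i}$, and under the real-embedding decomposition an element $A \in \GL[m_i]{\mathfrak{o}_i}$ acts on the $\sigma$-component as $A^\sigma$ (apply $\sigma$ to matrix entries, using that $\sigma$ acts on $\mathfrak{o}_i$). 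Combining with the reduction of the previous paragraph gives the theorem.

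The main obstacle I anticipate is step (a)'s passage from the rational endomorphism algebra to the real group in a way that tracks the Galois action correctly — specifically, verifying that the $[K_i:\QQ]$ real factors produced by $D_i \otimes_\QQ \RR$ are exactly indexed by $\Gal(K_i/\QQ)$ (this uses $K_i \subset \RR$, so all archimedean places of $K_i$ are real and in bijection with embeddings $K_i \hookrightarrow \RR$, i.e.\ with the Galois group once we fix one embedding), and that the division algebra $L_i$ with its Galois action and order $\mathfrak{o}_i$ can be chosen with all three bulleted properties simultaneously. The Schur-index-one hypothesis is precisely what guarantees $D_i \otimes_{K_i} \KK_i$-places are split appropriately so that $L_i \otimes_{K_i} \RR \xrightarrow{\sim} \KK_i$; without it $D_i$ could ramify at real places and the target would not be a product of $\GL$'s over $\RR,\CC,\HH$ in the clean stated form. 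A secondary technical point is making the ``commensurable'' bookkeeping honest: the lift from $\Aut_{\hol}(\lat)$ to $\nmlr[\Gamma]{\Aff(n)}$ is only up to the finite obstruction recorded in the Remark after Theorem~\ref{thm:classifying} and a finite-index subgroup (those equivariant automorphisms actually preserving $\Gamma$, not merely $\lat$ and the $\hol$-action), but all of this is absorbed by commensurability.
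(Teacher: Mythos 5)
Your overall route coincides with the paper's: reduce $\tau(\nmlr[\Gamma]{\Aff(n)})$ to (a finite-index subgroup of) $\ctlr{\Aut(\lat)}$, split into isotypic blocks by Schur's lemma, track the Galois action via the field $K_i = \QQ(\chi_i)$, and pin down an order $\mathfrak{o}_i$ by comparing lattices. However, two steps you treat as routine are where the paper invests real work, and in both places your sketch would not go through as written.

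First, the claim that ``each $\hol$-equivariant automorphism of $\lat$ lifts [to $\nmlr[\Gamma]{\Aff(n)}$] up to finite index'' is exactly the content of Proposition~\ref{prop:normaliser-fi}, and it is not automatic: it requires showing that the group $H^1(\hol,\RR^n/\lat)$ is finite, so that the $\nmlr{\Aut(\lat)}$-action on the set of $\ker(\tau)$-conjugacy classes of crystallographic groups with data $(\lat,\hol)$ has finite orbits. Your appeal to ``the finite obstruction recorded in the Remark'' does not cover this; that remark concerns a different finite quotient. Second, the appeal to Skolem--Noether for the $\Gal(K_i/\QQ)$-action on $L_i$ is not correct: Skolem--Noether produces \emph{inner} automorphisms of a central simple algebra, which \emph{fix} the centre pointwise, whereas what you need is an automorphism of $L_i$ \emph{restricting to} $\sigma\colon K_i\to K_i$, which necessarily moves the centre. (And under the Schur-index-one hypothesis, $L_i$ is in fact a field, where inner automorphisms are trivial, so Skolem--Noether gives nothing at all.) The paper instead proves in Proposition~\ref{prop:AutU-ncryst} that $\sigma$ extends to an isomorphism $L\to L^\sigma$ onto an a priori different subalgebra $L^\sigma\subseteq\KK_i$, and then in Corollary~\ref{cor:Lsigma} shows---by choosing an explicit $K$-basis of $L$ using Lemma~\ref{lem:k2}---that one can arrange $L^\sigma=L$. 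A related imprecision: $D_i=\End_{\hol}(V_{i,1})$ has centre the \emph{complex} character field, which in the complex-type case strictly contains the paper's $K_i$ (the real character field); conflating the two obscures why $L_i\otimes_{K_i}\RR\cong\KK_i$ rather than $\RR$. With those two gaps filled, the remaining steps (Schur decomposition, real places indexed by $\Gal(K_i/\QQ)$, commensurability of lattice stabilisers via Lemma~\ref{lem:Aut-lat-fi}) match the paper.
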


The following result is a straightforward corollary of Theorem~\ref{thm:commensurable} and the fact that the moduli space of flat metrics, $\Mflat = \Tflat / \MCG(\mathcal{O})$, has the structure of a very good orbifold.

\begin{corx} \label{cor:commensurable-orb}
Let $\Gamma \leq \Aff(n)$ be a crystallographic group, and let $\mathcal{O} = \Gamma\backslash\RR^n$ be the corresponding closed flat orbifold. Then the moduli space for flat metrics on $\mathcal{O}$ is a very good orbifold commensurable with the Cartesian product
\[
\prod_{i=1}^\ell \left( \OO[m_i]{\KK_i}^{[K_i:\QQ]} \,\Big\backslash\, \GL[m_i]{\KK_i}^{[K_i:\QQ]} \,\Big/\, \left\{ (A^\sigma \mid \sigma \in \Gal(K_i/\QQ)) \,\middle|\,\vphantom{\big|} A \in \GL[m_i]{\mathfrak{o}_i} \right\} \right)
\]
in the notation and under the assumptions of Theorem~\ref{thm:commensurable}.
\end{corx}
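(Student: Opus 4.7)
The plan is to combine the identification of $\Tflat$ as a homogeneous space with the algebraic descriptions provided by Theorems~\ref{thm:classifying} and~\ref{thm:commensurable}.  The claim that $\Mflat$ is a very good orbifold is one of the main results of the paper, established separately, so I would focus on the commensurability assertion.

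I would first recall, from the setup in earlier sections, the identification
\[
\Tflat \;\cong\; \ctlr{\OO{\RR}} \backslash \ctlr{\GL{\RR}},
\]
obtained by parametrizing $\hol$-invariant positive-definite inner products on $\RR^n$ via a basepoint making $\hol$ orthogonal.  Under this identification, Theorem~\ref{thm:classifying}~\eqref{it:class-cocomens} shows that the $\MCG(\mathcal{O})$-action on $\Tflat$ agrees, modulo a finite kernel, with that of $\tau(\nmlr[\Gamma]{\Aff(n)})/\hol$.  Replacing $\tau(\nmlr[\Gamma]{\Aff(n)})$ by its finite-index intersection with $\ctlr{\GL{\RR}}$ (of finite index because $\nmlr{\GL{\RR}}/\ctlr{\GL{\RR}}$ embeds into the finite group $\Aut(\hol)$), I obtain that $\Mflat$ is commensurable, as an orbifold, with the double coset space
\[
\ctlr{\OO{\RR}} \backslash \ctlr{\GL{\RR}} \big/ \bigl( \tau(\nmlr[\Gamma]{\Aff(n)}) \cap \ctlr{\GL{\RR}} \bigr).
\]

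Next, I would invoke Theorem~\ref{thm:commensurable}.  The isomorphism $\ctlr{\GL{\RR}} \cong \bigoplus_{i=1}^\ell \GL[m_i]{\KK_i}^{[K_i:\QQ]}$ identifies the lattice $\tau(\nmlr[\Gamma]{\Aff(n)}) \cap \ctlr{\GL{\RR}}$ with a group commensurable with the arithmetic lattice in the corollary.  The remaining point is to verify that the compact subgroup $\ctlr{\OO{\RR}}$ corresponds, under this same isomorphism, to $\bigoplus_{i=1}^\ell \OO[m_i]{\KK_i}^{[K_i:\QQ]}$.  I would read this off from the construction in the proof of Theorem~\ref{thm:commensurable}: each factor $\GL[m_i]{\KK_i}$ arises as the automorphism group of an isotypic $\KK_i$-vector-space summand equipped with a canonical $\hol$-invariant positive-definite form, whose orthogonal group is precisely $\OO[m_i]{\KK_i}$.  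Substituting these identifications into the double coset space above yields the displayed product.

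The principal technical point is this last step: pinning down $\ctlr{\OO{\RR}}$ inside the product decomposition of $\ctlr{\GL{\RR}}$.  Once this is in place, the corollary follows formally, since commensurability of discrete groups acting properly on the contractible homogeneous space $\ctlr{\OO{\RR}} \backslash \ctlr{\GL{\RR}}$ (guaranteed by Theorem~\ref{thm:classifying}~\eqref{it:class-EG}) descends to orbifold commensurability of the quotients—finite-index passages produce finite-sheeted orbifold covers, and finite extensions produce finite quotient orbifolds.
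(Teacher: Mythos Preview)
Your proposal is correct and follows essentially the same route as the paper, which simply cites Proposition~\ref{prop:Mflat-orb}, Theorem~\ref{thm:commensurable}, and Theorem~\ref{thm:Bettiol-Derdzinski-Piccione} with its surrounding discussion. One small simplification: the detour through Theorem~\ref{thm:classifying}\ref{it:class-cocomens} is unnecessary, since $\Mflat$ is \emph{defined} in the paper as $\Tflat/\tNG$ with $\tNG = \tau(\nmlr[\Gamma]{\Aff(n)})$, so you can pass directly to the double coset description without invoking the comparison with $\MCG(\mathcal{O})$.
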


Finally, we consider the case when all the $\hol$-representations $V_{i,j}$ in \eqref{eq:splitting} are irreducible over~$\RR$. In that case, we have the following easier description.

\begin{corx} \label{cor:commensurable-cryst}
Suppose that the subfields $K_i$ in~\eqref{eq:splitting} are all equal to $\QQ$, i.e.\ the $\hol$-representations $V_{i,j} \otimes \RR$ are irreducible over $\RR$. Then we have an identification $\ctlr{\GL{\RR}} \cong \bigoplus_{i=1}^\ell \GL[m_i]{\KK_i}$, under which the subgroup $\tau(\nmlr[\Gamma]{\Aff(n)}) \leq \GL{\RR}$ is commensurable with the subgroup
\[
\bigoplus_{i=1}^\ell \GL[m_i]{R_i} \leq \bigoplus_{i=1}^\ell \GL[m_i]{\KK_i},
\]
where $R_i \subseteq \KK_i$ is a discrete subring that spans $\KK_i$ over $\RR$.  In particular, $\Mflat$ is a very good orbifold commensurable with the Cartesian product
\[
\prod_{i=1}^\ell \left( \OO[m_i]{\KK_i} \,\big\backslash\, \GL[m_i]{\KK_i} \,\big/\, \GL[m_i]{R_i} \right).
\]
\end{corx}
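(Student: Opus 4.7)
The plan is to derive Corollary~\ref{cor:commensurable-cryst} as a direct specialization of Theorem~\ref{thm:commensurable} and Corollary~\ref{cor:commensurable-orb} under the collapse $K_i = \QQ$. First I would verify that the hypothesis of the corollary enforces this collapse: if $V_{i,j}$ is $\RR$-irreducible, then for the minimal subfield $K_i \subseteq \RR$ of the splitting~\eqref{eq:splitting} the choice $K_i = \QQ$ already satisfies the defining condition, since with $U_{i,j} := V_{i,j}$ the tensor product $U_{i,j} \otimes_{\QQ} \RR = V_{i,j} \otimes \RR$ is $\RR$-irreducible by assumption; by minimality $K_i = \QQ$, and hence $\Gal(K_i/\QQ)$ is trivial and $[K_i:\QQ] = 1$.

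With this collapse in hand, I would apply Theorem~\ref{thm:commensurable} directly. The Cartesian powers $(-)^{[K_i:\QQ]}$ reduce to a single factor, and the twisted diagonal subgroup collapses to $\GL[m_i]{\mathfrak{o}_i}$. Setting $R_i := \mathfrak{o}_i$ immediately yields both the claimed isomorphism $\ctlr{\GL{\RR}} \cong \bigoplus_i \GL[m_i]{\KK_i}$ and the commensurability of $\tau(\nmlr[\Gamma]{\Aff(n)})$ with $\bigoplus_i \GL[m_i]{R_i}$. The two Galois-equivariance conditions imposed on $\mathfrak{o}_i$ in Theorem~\ref{thm:commensurable} become vacuous in this setting.

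Next I would verify the stated properties of $R_i$. Since $R_i$ is a $\ZZ$-order in a division $\QQ$-subalgebra $L_i \subseteq \KK_i$ with $L_i \otimes_\QQ \RR \cong \KK_i$, by definition it is a subring of $L_i$ which is finitely generated as a $\ZZ$-module and spans $L_i$ over $\QQ$; hence it also spans $\KK_i$ over $\RR$. A finitely generated $\ZZ$-submodule of a finite-dimensional $\RR$-vector space is automatically discrete in that space, so $R_i$ is discrete in $\KK_i$. The final claim about $\Mflat$ is then the corresponding specialization of Corollary~\ref{cor:commensurable-orb}, using that commensurability of arithmetic subgroups of $\ctlr{\GL{\RR}}$ is preserved under passage to the relevant double-coset spaces.

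The step I expect to require the most care is ensuring that the Schur-index hypothesis of Theorem~\ref{thm:commensurable} is implicit in the corollary's setting or can otherwise be verified. If it turns out not to be automatic when each $V_{i,j}$ is merely $\RR$-irreducible, my fallback is a direct construction of $R_i$ bypassing the abstract machinery: take $L_i := \End_{\QQ\hol}(V_{i,j})$, which is a division $\QQ$-algebra with $L_i \otimes_\QQ \RR \cong \KK_i$ by Schur's lemma over $\RR$, and let $R_i$ be the $\ZZ$-order of elements of $L_i$ preserving the integral lattice $\lat \cap \bigoplus_j V_{i,j}$; the desired commensurability would then follow from standard arithmetic arguments on $\hol$-invariant lattices, paralleling those in the proof of Theorem~\ref{thm:commensurable} but becoming substantially cleaner in the absence of Galois descent.
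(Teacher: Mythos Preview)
Your primary approach---specializing Theorem~\ref{thm:commensurable} to $K_i = \QQ$---does not go through in general, precisely because of the Schur-index hypothesis you flagged. That hypothesis is \emph{not} implied by $\RR$-irreducibility of the $V_{i,j}$. For a concrete obstruction, take $\hol = Q_8$: its unique faithful irreducible $\QQ$-representation $V$ is $4$-dimensional, $V \otimes \RR$ is irreducible of quaternionic type, but $V \otimes \CC$ is twice a $2$-dimensional complex irreducible $U$ with $\QQ(\chi_U) = \QQ$ and Schur index $2$. So Corollary~\ref{cor:commensurable-cryst} genuinely covers cases outside the scope of Theorem~\ref{thm:commensurable}, and your fallback is not optional but required.

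The paper in fact proves Corollary~\ref{cor:commensurable-cryst} directly, \emph{before} introducing the Schur-index assumption and independently of Theorem~\ref{thm:commensurable}, by combining Corollary~\ref{cor:commens} (which reduces $\tau(\nmlr[\Gamma]{\Aff(n)})$ up to commensurability to $\prod_i \ctlr[\hol|_{W_i}]{\Aut(\lat \cap W_i)}$) with Lemma~\ref{lem:crystallographic}. That lemma is essentially your fallback made precise: it fixes $\hol$-equivariant isomorphisms $\phi_j\colon V \to V_j$, sets $\lat' = \bigcap_j \phi_j^{-1}(\lat \cap V_j)$, defines $R = \End_{\hol}(\lat')$, and shows $R$ is a discrete subring of $\KK = \End_{\hol}(V \otimes \RR)$ spanning $\KK$ over $\RR$, with $\ctlr{\Aut(\lat \cap W)}$ commensurable with $\GL[m]{R}$ via Lemma~\ref{lem:Aut-lat-fi}. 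Note the paper works with a lattice inside a \emph{single} irreducible $V$, not with $\lat \cap W_i$ itself as you proposed; this is what makes $\End_{\hol}(\lat')$ land inside $\KK$ rather than in $\Mat_m(\KK)$. The moduli-space statement is then obtained from Proposition~\ref{prop:Mflat-orb} and Theorem~\ref{thm:Bettiol-Derdzinski-Piccione}, exactly as you indicate.
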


The structure of the paper is as follows.  In Section~\ref{sec:prelims}, we review the necessary preliminaries on orbifolds and their Teichm\"uller and moduli spaces.  In Section~\ref{sec:EG}, we discuss the structure of the group $\MCG(\mathcal{O})$ and prove Theorem~\ref{thm:classifying}.  We study the group $\tau(\nmlr[\Gamma]{\Aff(n)})$ up to commensurability and prove the remaining results in Section~\ref{sec:commens}.

\subsection*{Acknowledgement}

K.\ G. and I.\ A.\ M.\ S. would like to thank the London Mathematical Society for partially supporting the project through the Scheme 5 Grant No.\ 52204.

\section{Preliminaries} \label{sec:prelims}

In this section we give some essential definitions related to orbifolds and their moduli spaces. See \cite{thurston1979} and \cite{gittins2024} together with the references therein for more details on orbifolds.
\subsection{Flat orbifolds} For a connected open subset $U\subseteq X$, an orbifold chart (of dimension~$n$) over $U$ is a triple $(\widetilde U, G_U,\phi_U)$, where $\widetilde U\subseteq\RR^n$ is a connected open subset, $G_U$ is a finite group acting effectively by diffeomorphisms on $\widetilde U$, and $\phi_U$ is a map inducing a homeomorphism ${\widetilde U/G_U\cong U}$.

A (smooth) \emph{orbifold} of dimension $n$ is a second countable Hausdorff topological space $X$ together with a maximal set of $n$-dimensional charts, called \emph{orbifold atlas}, that cover it. The orbifold charts must satisfy a compatibility condition (see, for example, Definition~2.1 of \cite{gittins2024}).

If an orbifold arises as the quotient space $M/G$ of a manifold $M$ under a smooth proper action of a discrete group $G$, it is called \emph{good}, and those which are quotients by finite groups are called \emph{very good}. If the action of the group is free, the quotient $M/G$ is a manifold.

A \emph{Riemannian structure} on an orbifold $\mathcal O$ is an assignment of a $G_U$-invariant Riemannian metric $\widetilde U$ to each orbifold chart. If $\mathcal{O}$ is a good orbifold, i.e.\ $\mathcal{O} = M/G$ for a smooth proper action of a discrete group $G$ on a manifold $M$, then the Riemannian structures on $\mathcal{O}$ can be naturally identified with $G$-invariant Riemannian structures on $M$.

A \emph{covering} of an orbifold $\mathcal O$ is a pair $(\widetilde {\mathcal O},\rho)$, where $\widetilde{\mathcal O}$ is another orbifold and $\rho\colon \widetilde X\to X $ is a smooth surjective map between the underlying topological spaces, satisfying the following property: each point $x\in X$ has a neighborhood $U$ and an orbifold chart over $U$, $(\widetilde U,G_U,\phi_U)$, for which each component $V_i\subseteq \rho^{-1}(U)$ is isomorphic to $\widetilde U/G_U^i$, where $G_U^i\subset G_U$ is a subgroup.

We say that a covering $(\widetilde{\mathcal{O}},\rho)$ of an orbifold $\mathcal{O}$ is \emph{finite} if the cardinality of the preimage of any point $x \in X$ under $\rho$ is finite. In particular, an orbifold is very good if and only if it has a finite covering $(\widetilde{\mathcal{O}},\rho)$ with $\widetilde{\mathcal{O}}$ a manifold.  Two orbifolds $\mathcal{O}$, $\mathcal{O}'$ are said to be \emph{commensurable} if they share a finite orbifold cover $\widetilde{\mathcal{O}}$.

The \emph{universal orbifold covering} of an orbifold $\mathcal O$ is a connected orbifold covering that is the covering of any other connected covering of $\mathcal O$. When $\mathcal O$ is a good orbifold, its universal orbifold covering is a simply connected manifold.  Let $(\widetilde{\mathcal O},\rho)$ be an orbifold covering. A \emph{deck transformation} of an orbifold covering is an (orbifold) homeomorphism $f:\widetilde {\mathcal O}\to \widetilde {\mathcal O}$ that preserves the covering map, that is, $\rho\circ f=\rho$. The \emph{orbifold fundamental group} of $\mathcal O$, $\pi^{orb}_1(\mathcal O)$, is the group of deck transformations of its universal cover.

\begin{definition}
A \emph{flat orbifold} is an orbifold equipped with a Riemannian structure in which the metric on each orbifold chart has zero sectional curvature. Such metrics are referred to as \emph{flat metrics}.
\end{definition}

Flat orbifolds are characterized by their orbifold fundamental group, which corresponds to a crystallographic group.

\begin{definition}
A \emph{crystallographic group} is a discrete subgroup $\Gamma$ of the group of isometries of $\RR^n$ such that $\RR^n/\Gamma$ is compact. A torsion-free crystallographic group $\Gamma$ is called a \emph{Bieberbach group}.
\end{definition}

When $\Gamma$ is a Bieberbach group we have that $\RR^n / \Gamma$ is a closed flat manifold. When $\Gamma$ is a non-torsion-free crystallographic group, we have that $\RR^n / \Gamma$ is a closed flat orbifold with singularities. Conversely, let $M$ be a closed manifold with a flat metric, then its universal cover is isometric to $\RR^n$ and its fundamental group is isomorphic to a Bieberbach group. Similarly, if a closed Riemannian orbifold is flat, then it is good (see \cite[Theorem III.$\mathcal G$.1.13]{bridson2013}), its universal orbifold covering is isometric to $\RR^n$ (by the Killing--Hopf theorem) and its orbifold fundamental group is isomorphic to a crystallographic group.

\subsection{Moduli space of flat metrics} \label{ssec:prelims-modsp}

Let $M$ be a smooth, closed manifold. We denote by $\mathcal{R}(M)$ the space of all (complete) Riemannian metrics on $M$. We equip $\mathcal{R}(M)$ with the smooth compact-open topology (see \cite{Tuschmann-Wraith}, \cite{Corro-Kordass}, \cite{Hirsch}). To identify metrics that are isometric, we consider the following group action: let $\Diff(M)$ denote the group of self-diffeomorphisms of $M$, then $\Diff(M)$ acts on $\mathcal{R}(M)$ by pulling back metrics. The quotient of $\mathcal{R}(M)$ by this action is the \emph{moduli space} $\mathcal{M}(M)$ of Riemannian metrics on $M$. Since this action preserves sectional curvature, we may consider the following:

\begin{definition} \label{def moduli}
The \emph{moduli space of flat metrics} $\Mflat[M]$ is the quotient of the space of (complete) Riemannian metrics with zero sectional curvature, $\Rflat[M]$, by the action of $\Diff(M)$.  The \emph{Teichm\"uller space of flat metrics} $\Tflat[M]$ is the quotient of $\Rflat[M]$ by the action of the subgroup $\Diff_0(M) \leq \Diff(M)$ of diffeomorphisms isotopic to the identity.
\end{definition}

Our methods to study the moduli spaces of flat metrics on orbifolds rely on a result of Bieberbach.

The group of affine transformations of $\RR^n$, denoted by $\Aff(n)$, has the structure of a semidirect product $\Aff(n) = \GL{\RR} \ltimes \RR^n $. The group of isometries of $\RR^n$, denoted by $\Iso(n)$, also has the structure of a semidirect product, $\Iso(n) = \OO{\RR}\ltimes \RR^n $. 

Consider the projection homomorphism
\begin{align*}
\tau\colon \Aff(n) & \rightarrow \GL{\RR}, \\
(A,v) & \mapsto A.  
\end{align*}

\begin{definition}
Let $\Gamma$ be a crystallographic group. The \emph{holonomy} of $\Gamma$, denoted  $\hol$,  is the subgroup of $\GL{\RR}$ given as the image of $\gp$ under $\tau$. The \emph{translation lattice} of $\Gamma$, denoted $\lat$, is the subgroup of $\Aff(n)$ given as the kernel of the restriction of $\tau$ to $\Gamma$.
\end{definition}

\begin{thm}[Bieberbach Theorems {\cite{Szczepanski}}] \label{thm:bieb}
Let $n \geq 0$.
\begin{enumerate}
\item Let $\Gamma \leq \Aff(n)$ be a crystallographic group. Then $\lat \cong \ZZ^n$ is a normal subgroup of $\Gamma$ of finite index, and it is a maximal abelian subgroup of $\Gamma$.
\item There exist finitely many isomorphism classes of crystallographic groups of dimension $n$.
\item \label{it:bieb-iso} Let $\phi\colon \Gamma \to \Gamma'$ be an isomorphism between two crystallographic groups $\Gamma,\Gamma' \leq \Aff(n)$. Then there exists $\alpha_\phi \in \Aff(n)$ such that $\phi(\gamma) = \alpha_\phi \circ \gamma \circ \alpha_\phi^{-1}$ for all $\gamma \in \Gamma$.
\end{enumerate}
\end{thm}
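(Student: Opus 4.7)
The plan is to establish the three statements in sequence, each building on the previous.

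For part~(i), I would start from the observation that $\Gamma$ is discrete in $\Iso(n) = \OO{\RR} \ltimes \RR^n$ and acts cocompactly on $\RR^n$. The heart of the argument is Bieberbach's \emph{almost-commutativity} estimate: for orthogonal matrices $A, B$ close to the identity, $\|[A,B] - I\| \leq 2\|A - I\| \cdot \|B - I\|$. Fix a sufficiently small neighbourhood $U$ of $I$ in $\OO{\RR}$; for $\gamma_1,\gamma_2 \in \Gamma$ with $\tau(\gamma_i) \in U$, iterating commutators produces a sequence whose linear parts converge to $I$, and by discreteness this sequence must stabilize at the identity. A refined analysis, controlling the translation parts as well, shows that any $\gamma \in \Gamma$ with $\tau(\gamma)$ sufficiently close to $I$ must actually be a pure translation. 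Hence $\hol = \tau(\Gamma)$ is discrete in the compact group $\OO{\RR}$, so $\hol$ is finite. Cocompactness then forces $\lat = \ker(\tau|_\Gamma)$ to be a full-rank lattice in $\RR^n$, hence isomorphic to $\ZZ^n$, and maximal abelianness is immediate: any element of $\Gamma$ commuting with a full-rank lattice of translations must itself be a translation.

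For part~(ii), having identified $\hol$ with a finite subgroup of $\GL{\ZZ}$ acting on $\lat \cong \ZZ^n$, I would invoke Jordan's theorem together with classical Minkowski-type bounds on the finiteness of $\GL{\ZZ}$-conjugacy classes of finite subgroups of $\GL{\QQ}$. Group extensions $1 \to \ZZ^n \to \Gamma \to \hol \to 1$ realizing a given holonomy are then classified by $H^2(\hol;\ZZ^n)$, which is a finite group since $\hol$ is finite. This yields finitely many isomorphism classes of crystallographic groups in each dimension.

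For part~(iii), any isomorphism $\phi\colon \Gamma \to \Gamma'$ must preserve the unique maximal normal abelian subgroups characterized in~(i), so it restricts to an isomorphism $\lat \to \lat[\Gamma']$. Any isomorphism of full-rank lattices in $\RR^n$ extends uniquely to a linear automorphism $A \in \GL{\RR}$. After conjugating $\phi$ by the affine map with linear part $A$, we may assume $\phi$ is the identity on $\lat$; the remaining content of $\phi$ is a cocycle $\hol \to \RR^n/\lat$ describing how $\phi$ alters coset representatives modulo $\lat$. Averaging over the finite group $\hol$ exhibits this cocycle as a coboundary, producing a translation that adjusts $\phi$ to conjugation by a single affine transformation $\alpha_\phi$.

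The main obstacle I anticipate is part~(i): combining the almost-commutativity estimate with discreteness and cocompactness to extract both the finite-index and full-rank conclusions requires delicate bookkeeping, and the transition from commuting \emph{linear parts} to $\gamma$ being a pure translation is the subtle point. Part~(iii) also has a nontrivial cohomological step, namely verifying that the relevant obstruction in $H^1(\hol;\RR^n/\lat)$ vanishes after the initial linear adjustment.
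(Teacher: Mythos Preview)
The paper does not prove this theorem; it is quoted with a citation to Szczepa\'nski and used as a black box throughout. Your outline follows the classical route (commutator estimates for part~(i), Jordan--Zassenhaus plus $H^2$ for part~(ii), reduction to a cohomological obstruction for part~(iii)) and is essentially correct.

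One genuine slip in part~(iii): after arranging $\phi|_{\lat}=\Id$, the obstruction is a $1$-cocycle $\hol\to\RR^n$, \emph{not} $\hol\to\RR^n/\lat$. Concretely, if $(h,s(h))\in\Gamma$ is a lift of $h\in\hol$, then $\phi(h,s(h))=(h,t(h))$ is a specific element of $\Gamma'$, and $d(h)\coloneqq t(h)-s(h)\in\RR^n$ is a well-defined cocycle (the homomorphism condition on $\phi$ together with $\phi|_{\lat}=\Id$ forces $d(h_1h_2)=h_1\,d(h_2)+d(h_1)$). Averaging then works because $H^1(\hol;\RR^n)=0$, as $\RR^n$ is uniquely divisible. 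By contrast, $H^1(\hol;\RR^n/\lat)$ is typically nonzero---the paper itself exploits this group in the proof of Proposition~\ref{prop:normaliser-fi}, where only its \emph{finiteness} is used---so your averaging claim would fail with the target you wrote.
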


Thanks to the Bieberbach theorems, the space of flat metrics on a fixed closed orbifold $\mathcal{O}$ can be described in terms of their crystallographic groups, as follows. For a flat metric $g$ on $\mathcal{O}$ we get a crystallographic group $\Gamma$ such that $\mathcal{O}=\RR^n/\Gamma$, and if we consider another flat metric $g'$ on $\mathcal{O}$ we get another crystallographic group $\Gamma'$, where $\Gamma' = \gamma \Gamma \gamma^{-1} $ for some $\gamma \in \Aff(n)$.  In fact, if $\mathcal{O}$ is a manifold then we have the following description.

\begin{cor}[see \cite{Wolf2}, {\cite[\S 1.3]{KarlaThesis}}]
    For a flat Riemannian manifold $M = \RR^n/\gp$, where $\gp \leq \Aff(n)$ is a Bieberbach group, we have homeomorphisms
    \[
    \Tflat[M] \cong \Iso(n) \backslash \widetilde{\mathcal{C}}_\Gamma
    \]
    and
    \[
    \Mflat[M] \cong \Iso(n) \backslash \widetilde{\mathcal{C}}_\Gamma /  \nmlr[\gp]{\Aff(n)},
    \]
    where $\widetilde{\mathcal{C}}_\Gamma = \{ \gamma \in \Aff(n)  \mid \gamma \gp \gamma^{-1} \subset  \Iso(n) \}$.
\end{cor}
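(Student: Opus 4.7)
The plan is to construct a natural $\Iso(n)$-invariant map $\Phi\colon \widetilde{\mathcal{C}}_\Gamma \to \Rflat[M]$, verify that the induced map to $\Tflat[M]$ is a bijection, and then match the right action of $\nmlr[\Gamma]{\Aff(n)}$ on $\widetilde{\mathcal{C}}_\Gamma$ with the action of $\Diff(M)/\Diff_0(M)$ on $\Tflat[M]$.  For the construction, given $\alpha \in \widetilde{\mathcal{C}}_\Gamma$, the conjugate $\Gamma_\alpha := \alpha\Gamma\alpha^{-1}$ is a Bieberbach subgroup of $\Iso(n)$, so the Euclidean metric on $\RR^n$ descends to a flat metric $g_\alpha$ on $M_\alpha := \RR^n/\Gamma_\alpha$.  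Since $\alpha$ intertwines the two $\Gamma$-actions, it descends to a diffeomorphism $\bar\alpha\colon M \to M_\alpha$, and I set $\Phi(\alpha) := \bar\alpha^* g_\alpha \in \Rflat[M]$.  If $\alpha' = h\alpha$ with $h \in \Iso(n)$, then $h$ descends to an isometry $M_\alpha \to M_{\alpha'}$ and a quick check gives $\Phi(h\alpha) = \Phi(\alpha)$, so $\Phi$ induces a well-defined map $\bar\Phi\colon \Iso(n)\backslash\widetilde{\mathcal{C}}_\Gamma \to \Tflat[M]$.

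For surjectivity, I lift a flat metric $g$ on $M$ to a $\Gamma$-invariant flat metric $\tilde g$ on $\RR^n$.  The Killing--Hopf theorem supplies an isometry $\psi\colon(\RR^n,\tilde g) \to (\RR^n,g_0)$ to the Euclidean model, and $\psi\Gamma\psi^{-1}$ is a Bieberbach subgroup of $\Iso(n)$.  Applying Theorem~\ref{thm:bieb}(iii) to the abstract isomorphism $\gamma \mapsto \psi\gamma\psi^{-1}$ produces some $\alpha \in \Aff(n)$ with $\alpha\gamma\alpha^{-1} = \psi\gamma\psi^{-1}$ for every $\gamma \in \Gamma$, so $\alpha \in \widetilde{\mathcal{C}}_\Gamma$.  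The map $\alpha^{-1}\psi$ then centralizes $\Gamma$ inside $\Diff(\RR^n)$ and descends to a self-diffeomorphism $f$ of $M$ satisfying $g = f^*\Phi(\alpha)$; a further argument, exploiting the linear structure of $\RR^n$ to build a $\Gamma$-equivariant isotopy from $\alpha^{-1}\psi$ to the identity, shows that $f \in \Diff_0(M)$, placing $g$ in the same class as $\Phi(\alpha)$ in $\Tflat[M]$.  For injectivity, if $\Phi(\alpha)$ and $\Phi(\alpha')$ coincide in $\Tflat[M]$, then their holonomy representations $\gamma\mapsto\alpha\gamma\alpha^{-1}$ and $\gamma\mapsto\alpha'\gamma\alpha'^{-1}$ are $\Iso(n)$-conjugate, since the holonomy is an isotopy invariant of $(\Iso(n),\RR^n)$-structures; so $(\alpha')^{-1}h\alpha \in \ctlr[\Gamma]{\Aff(n)}$ for some $h \in \Iso(n)$.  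Because $\Gamma$ contains the full-rank translation lattice $\lat$, a direct calculation gives $\ctlr[\Gamma]{\Aff(n)} = \{(v,\Id) : v \in \Fix(\hol)\} \subseteq \Iso(n)$, so $\alpha' \in \Iso(n)\alpha$, completing the proof of the first homeomorphism.

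For the moduli space, I analyze the pullback action of $\Diff(M)/\Diff_0(M)$ on $\Tflat[M]$.  Any diffeomorphism $f\colon M\to M$ induces an automorphism of $\Gamma = \pi_1(M)$ which, by Theorem~\ref{thm:bieb}(iii), is conjugation by some $\beta \in \nmlr[\Gamma]{\Aff(n)}$.  An explicit computation with developing maps identifies the action of $f$ on $\Iso(n)\backslash\widetilde{\mathcal{C}}_\Gamma$ with right multiplication by $\beta$; conversely every $\beta \in \nmlr[\Gamma]{\Aff(n)}$ is realized by an affine self-diffeomorphism of $M$.  The redundancies $\beta \in \Gamma$ and $\beta \in \ctlr[\Gamma]{\Aff(n)}$ are already absorbed by the left $\Iso(n)$-action (since $\alpha\Gamma \subseteq \Iso(n)\alpha$ and $\ctlr[\Gamma]{\Aff(n)} \subseteq \Iso(n)$), so the resulting quotient is exactly $\Iso(n)\backslash\widetilde{\mathcal{C}}_\Gamma/\nmlr[\Gamma]{\Aff(n)}$.

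The main obstacle in this plan is the isotopy claim in the surjectivity step: showing that the self-diffeomorphism $f$ of $M$, lifted to the $\Gamma$-centralizing diffeomorphism $\alpha^{-1}\psi$ of $\RR^n$, is isotopic to the identity requires producing a genuinely $\Gamma$-equivariant path of diffeomorphisms joining $\alpha^{-1}\psi$ to $\Id$.  This reflects the classical deformation principle that the holonomy representation is a complete invariant of a complete $(\Iso(n),\RR^n)$-structure on a closed manifold up to isotopy, and can be established either by invoking the Ehresmann--Thurston classification or by an explicit averaging argument using the underlying affine structure of $\RR^n$.
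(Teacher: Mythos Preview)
The paper does not prove this corollary; it is stated with references to \cite{Wolf2} and \cite[\S1.3]{KarlaThesis} and then used to motivate the orbifold definitions that follow.  So there is no ``paper's own proof'' to compare against, and your sketch must stand on its own.

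Your outline is the standard one and is essentially correct.  A few comments:

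\emph{Injectivity.}  Your argument is right but stops one step short.  From $(\alpha')^{-1}h\alpha \in \ctlr[\Gamma]{\Aff(n)} \subseteq \Iso(n)$ you get $\alpha' = h\alpha c^{-1}$ with $h,c \in \Iso(n)$, which is not yet $\alpha' \in \Iso(n)\alpha$.  The missing observation is that $c$ is a pure translation, so $\alpha c^{-1}\alpha^{-1}$ is also a translation and hence lies in $\Iso(n)$; then $\alpha' = h(\alpha c^{-1}\alpha^{-1})\alpha \in \Iso(n)\alpha$.  (Incidentally, you do not really need to invoke ``holonomy is an isotopy invariant'' here: once you lift $f \in \Diff_0(M)$ to a $\Gamma$-centralizing $\tilde f$ and observe that $\alpha'\tilde f\alpha^{-1}$ is a $g_0$-isometry of $\RR^n$, you get $\tilde f = (\alpha')^{-1}h\alpha \in \Aff(n)$ for free, and the rest is as above.)

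\emph{Surjectivity.}  You have correctly isolated the genuine obstacle: showing that the $\Gamma$-centralizing diffeomorphism $F = \alpha^{-1}\psi$ descends to an element of $\Diff_0(M)$.  Your straight-line idea $F_t(x) = x + t\,b(x)$ with $b = F - \Id$ (bounded and $\Gamma$-equivariant in the right sense) is the natural first attempt, but note that $F_t$ need not be a diffeomorphism for intermediate $t$, so either an Ehresmann--Thurston style deformation argument or a more careful construction is indeed required.  Citing this as a known deformation principle for complete $(G,X)$-structures is appropriate at the level of a sketch.

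\emph{Homeomorphism.}  You establish a bijection; the statement claims a homeomorphism.  The continuity in both directions follows from the explicit nature of $\Phi$ and of the developing-map construction, but it is worth a sentence.
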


Since the subset $\widetilde{\mathcal{C}}_\Gamma$ above is the preimage of $\mathcal{C}_\Gamma \coloneqq \{ X \in \GL[n]{\RR} \mid X\hol X^{-1} \subset \OO[n]{\RR} \}$ under~$\tau$, the Teichm\"uller space and the moduli space of flat metrics of $M=\RR^n/\gp$ can be equivalently described as
\[
\Tflat[M] = \OO[n]{\RR} \backslash \mathcal{C}_\Gamma
\]
and
\[
\Mflat[M] = \OO[n]{\RR} \backslash \mathcal{C}_\Gamma / \tau ( \text{N}_{\text{Aff}(n)} (\gp)),
\]
respectively. This motivates generalisations of moduli and Teichm\"uller spaces in the case when $M$ is replaced by an orbifold, as follows.

\begin{definition}
Let $\mathcal{O}=\RR^n/\Gamma$ be a flat orbifold.
\begin{enumerate}
    \item The \emph{Teichm\"uller space of flat metrics} $\Tflat[\mathcal{O}]$ of $\mathcal{O}$ is the orbit space \[\OO[n]{\RR} \backslash \mathcal{C}_\Gamma,\] where $\mathcal{C}_\Gamma = \{ X \in \GL[n]{\RR} \mid X\hol X^{-1} \subset \OO[n]{\RR} \}$, under the left $\OO[n]{\RR}$-action by matrix multiplication.
    \item The \emph{moduli space of flat metrics} $\Mflat[\mathcal{O}]$ of $\mathcal{O}$ is the orbit space $\Tflat[\mathcal{O}] / \mathcal{N}_\Gamma$ under the right $\mathcal{N}_\Gamma$-action by matrix multiplication, where $\mathcal{N}_\Gamma = \tau(\nmlr[\gp]{\Aff(n)})$. 
\end{enumerate}
\end{definition}

Now, we explain the description of the Teichm\"uller space of flat metrics on a flat orbifold done by Bettiol, Derdzinski and Piccione \cite{Bettiol-Derdzinski-Piccione}, where the isotypic components are used. Recall that a non-zero invariant subspace $V$ of $\RR^n$ is \emph{irreducible} if it does not contain any proper invariant subspace, or, equivalently, if every non-zero element of the vector space $\End_{\hol}(V)$ of linear equivariant endomorphisms of $V$ is an isomorphism. In this situation, $\End_{\hol}(V)$ is an associative real division algebra and hence isomorphic to one of $\RR$, $\CC$, or  $\HH$. The irreducible subspace $V$ is called of \emph{real}, \emph{complex}, or \emph{quaternionic} type, according to the isomorphism type of $\End_{\hol}(V)$.

We now consider the following simplification of the splitting \eqref{eq:splitting} of the $\hol$-representation given in the Introduction, by decomposing the representation over $\RR$ to begin with instead of considering the decomposition over $\QQ$ first. In particular, let 
\[
\RR^n = \bigoplus_{i=1}^\ell \bigoplus_{j=1}^{M_i} U_{i,j}'
\]
be a decomposition into irreducible $\hol$-subspaces, so that $U'_{i,j}$ is isomorphic to $U'_{i',j'}$ if and only if $i=i'$. Then the $W'_i = \bigoplus_{j=1}^{M_i} U'_{i,j}$ are the so-called \emph{isotypic components}. They are said to be of type $\RR$, $\CC$ or $\HH$ if the irreducible representations $U'_{i,j}$ are of the corresponding type.  

\begin{thm}[Bettiol--Derdzinski--Piccione {[\cite{Bettiol-Derdzinski-Piccione}]}] \label{thm:Bettiol-Derdzinski-Piccione}
Let $\mathcal{O}$ be a closed flat orbifold, and denote by $W'_i$, $1\leq i \leq \ell$, the isotypic components of the orthogonal representation of its holonomy group. Each $W'_i$ consists of $M_i$ copies of the same irreducible representation, and we write $\mathbb{K}_i$ for $\RR$, $\CC$ or $\HH$, according to this representation being of real, complex, or quaternionic type. Then the Teichm\"uller space $\Tflat$ is diffeomorphic to
\[
\Tflat \cong \prod_{i=1}^\ell \frac{\GL[M_i]{\mathbb{K}_i}}{\OO[M_i]{\mathbb{K}_i}},
\]
where $\GL[m]{\mathbb{K}}$ is the group of $\mathbb{K}$-linear automorphisms of $\mathbb{K}^m$ and $\OO[m]{\mathbb{K}}$ stands for $\mathrm{O}(m)$, $\mathrm{U}(m)$, or $\mathrm{Sp}(m)$, when $\mathbb{K}$ is, respectively, $\RR$, $\CC$, or $\HH$. In particular, $\Tflat$ is real analytic and diffeomorphic to $\RR^d$.  
\end{thm}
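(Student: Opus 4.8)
The plan is to rewrite the orbit space $\Tflat = \OO[n]{\RR}\backslash\mathcal{C}_\Gamma$ in purely representation-theoretic terms and then apply Schur's lemma. Since $\hol$ is finite it preserves some inner product on $\RR^n$, so after conjugating we may assume $\hol\subseteq\OO[n]{\RR}$, whence $I\in\mathcal{C}_\Gamma$. Writing $X=OP$ for the polar decomposition of $X\in\GL[n]{\RR}$ (so $O\in\OO[n]{\RR}$ and $P$ is positive-definite symmetric), the condition $X\hol X^{-1}\subseteq\OO[n]{\RR}$ is readily seen to be equivalent to $h^{\mathsf T}P^2h=P^2$ for all $h\in\hol$, i.e.\ to $P^2$ --- equivalently $P$, as a positive-definite symmetric matrix has a unique positive-definite symmetric square root --- commuting with every element of $\hol$. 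Since $\OO[n]{\RR}X$ depends only on $P$, this gives a diffeomorphism
\[
\Tflat \;\cong\; \bigl\{\, P\in\GL[n]{\RR} : P^{\mathsf T}=P,\ P>0,\ Ph=hP\ \text{for all }h\in\hol \,\bigr\},
\]
that is, $\Tflat$ is identified with the set of positive-definite self-adjoint elements of the commutant $\End_{\hol}(\RR^n)$, where ``self-adjoint'' and ``positive-definite'' are taken with respect to the ($\hol$-invariant) standard inner product.

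I would then split this along the isotypic decomposition $\RR^n=\bigoplus_{i=1}^\ell W_i'$. Over $\RR$ an invariant inner product identifies each irreducible with its own dual, so $\Hom_{\hol}(W_i',W_j')=0$ for $i\neq j$ and, moreover, every $\hol$-invariant bilinear form vanishes on $W_i'\times W_j'$; hence the $W_i'$ are mutually orthogonal, $\End_{\hol}(\RR^n)=\prod_{i=1}^\ell\End_{\hol}(W_i')$ as $\ast$-algebras for the adjoint involution, and a self-adjoint endomorphism is positive-definite if and only if each of its blocks is. So it suffices to treat one isotypic component $W:=W_i'=U^{\oplus M}$, where $U=U_{i,1}'$ is irreducible of type $\KK=\KK_i\in\{\RR,\CC,\HH\}$, $M=M_i$, and $D:=\End_{\hol}(U)\cong\KK$.

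Choosing the summands of $W=\bigoplus_{k=1}^{M}U_k$ mutually orthogonal and fixing $\hol$-equivariant isometries $U_k\cong U$, one gets $\End_{\hol}(W)\cong\Mat_M(D)$, under which the adjoint involution becomes $(\phi_{kl})\mapsto(\phi_{lk}^{\ast})$, where $\ast$ denotes the adjoint involution on $D$ induced by the invariant inner product on $U$. The crucial step is to pin down $\ast$: for any $e\in D$ with $e^2=-1$ one has $\langle ex,ex\rangle=\langle x,e^{\ast}ex\rangle>0$ for $x\neq0$, which forces $e^{\ast}=-e$ (the operator $e^{\ast}e\in D$ is positive-definite self-adjoint, hence a positive real scalar, and the identity $(e^{\ast})^{\ast}=e$ then pins this scalar to $1$); thus $\ast$ is the identity on $\RR$, complex conjugation on $\CC$, and quaternionic conjugation on $\HH$. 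Therefore $\End_{\hol}(W)\cong\Mat_M(\KK)$ carries the standard conjugate-transpose involution, its positive-definite self-adjoint elements are exactly the positive-definite Hermitian matrices over $\KK$, and the map $g\mapsto g^{\ast}g$ identifies this set with $\OO[M]{\KK}\backslash\GL[M]{\KK}\cong\GL[M]{\KK}/\OO[M]{\KK}$ (here $\OO[M]{\KK}$ is $\Ort(M)$, $\mathrm{U}(M)$ or $\mathrm{Sp}(M)$) via the polar decomposition in $\GL[M]{\KK}$. Taking the product over $i$ gives $\Tflat\cong\prod_{i=1}^\ell\GL[M_i]{\KK_i}/\OO[M_i]{\KK_i}$; since every identification above is the restriction of a classical diffeomorphism, this is a diffeomorphism.

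For the final claim, the set of positive-definite Hermitian $M\times M$ matrices over $\KK$ is an open convex cone in the real vector space of Hermitian matrices, hence a real-analytic manifold diffeomorphic to $\RR^{d_i}$ with $d_i=M+\tfrac12M(M-1)\dim_{\RR}\KK$; the product is then real analytic and diffeomorphic to $\RR^{d}$ with $d=\sum_i d_i$. The main obstacle, I expect, is the middle paragraph: organising the isomorphism $\End_{\hol}(W)\cong\Mat_M(D)$ compatibly with the adjoint involution and identifying that involution as the standard (positive) one on $\KK$ --- the remaining ingredients are routine manipulations with polar decompositions and Schur's lemma. One should also take care that each of the set-theoretic bijections above is smooth with smooth inverse, which is what upgrades the statement to the asserted diffeomorphism and underlies the real-analyticity claim.
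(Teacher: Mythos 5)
This theorem is cited in the paper from Bettiol--Derdzinski--Piccione without an in-house proof, so there is no ``paper's own proof'' to compare against; the closest the paper comes is Lemma~\ref{lem:Teich-IP}, which implements your first reduction (via $\OO{\RR}A \mapsto \langle-,-\rangle_A$) to the space of $\hol$-invariant inner products. Your argument is a correct standalone proof, and it follows the same route as the cited reference: reduce $\OO[n]{\RR}\backslash\mathcal{C}_\Gamma$ to positive-definite self-adjoint elements of $\End_{\hol}(\RR^n)$ (equivalently, $\hol$-invariant inner products), split along isotypic components, and apply Schur's lemma. The one place that deserves a sentence more than you give it is the identification of the adjoint anti-involution on $D = \End_{\hol}(U) \cong \KK$ with standard conjugation: the clause ``positive-definite self-adjoint, hence a positive real scalar'' uses that the $\ast$-fixed elements of $D$ are exactly $\RR$, which follows since a self-adjoint element of a finite-dimensional real division algebra is diagonalizable over $\RR$ and, being invertible-or-zero after subtracting any real number, can have only one eigenvalue. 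With that filled in, your computation $e^*e = c$, $e^* = -ce$, $(e^*)^* = c^2 e$, hence $c=1$ and $e^* = -e$ for every $e$ with $e^2=-1$ does pin $\ast$ down (in $\HH$ the elements with $e^2=-1$ span the $3$-dimensional complement of $\RR$, so $\ast$ is determined there by linearity). The remaining steps --- mutual orthogonality of isotypic components under an invariant inner product, $\End_{\hol}(W) \cong \Mat_M(D)$ carrying the conjugate-transpose involution, the polar-decomposition bijection with $\OO[M]{\KK}\backslash\GL[M]{\KK}$, and the convex-cone observation giving real-analyticity and diffeomorphism to $\RR^d$ --- are all correct.
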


The description of $\Tflat$ given in Theorem~\ref{thm:Bettiol-Derdzinski-Piccione} is consistent with the actions of $\tNG$ discussed below.  In particular, whenever a subgroup of $\GL{\RR} = \tau(\Aff(n))$ is identified with a subgroup $G \leq \GL[m]{\KK}$ in Section~\ref{sec:commens}, the action of $\tNG < \GL{\RR}$ on $\Tflat$ under this identification coincides with the action of $G$ on the right coset space $\OO[m]{\KK} \backslash \GL[m]{\KK}$ by right multiplication.

A study of $\Mflat[M]$ for $3$-dimensional closed manifolds was carried out by Kang in \cite{Kang} and by García in \cite{Karla3D}, where García examined the topology of these spaces using the structure of their Bieberbach groups and the above description. For $4$-dimensional closed flat manifolds, García and Palmas provided an algebraic description and analyzed the topology of most of these spaces in \cite{Karla4D}.

\subsection{The mapping class group for orbifolds} \label{ssec:prelims-mcg}

Here we introduce the main class of groups studied in the paper. We start with a general categorical setup.

We say a category $\mathcal{C}$ is \emph{topologically representable} if it is equipped with a faithful functor $F_{\mathcal{C}} \colon \mathcal{C} \to \mathbf{Top}$, where $\mathbf{Top}$ is the category of topological spaces (and continuous maps between them). The functor $F_{\mathcal{C}}$ is usually implicit---that is, we usually consider categories in which each object is understood to have an underlying topological space. Given two objects $X$, $Y$ in a topologically representable category $\mathcal{C}$, the set $\Hom_{\mathcal{C}}(X,Y)$ of morphisms from $X$ to $Y$ can be equipped with the \emph{compact-open topology}, generated by the subsets
\[
V(K,U) \coloneqq \{ f \in \Hom_{\mathcal{C}}(X,Y) \mid F_{\mathcal{C}}(f)(K) \subseteq U \}
\]
for $K \subseteq F_{\mathcal{C}}(X)$ compact and $U \subseteq F_{\mathcal{C}}(Y)$ open.

Now consider the case when $F_{\mathcal{C}}(X)$ is a compact Hausdorff space. Then the subset $\Aut_{\mathcal{C}}(X) \subseteq \Hom_{\mathcal{C}}(X,X)$ of automorphisms of $X$ is a topological group under the compact-open topology \cite[Theorem~3]{Arens}. Recall that for any topological group~$G$, the connected component $G_0 \subseteq G$ of the identity is a closed normal subgroup of $G$.

\begin{definition}
Let $X$ be an object in a topologically representable category $\mathcal{C}$ such that $F_{\mathcal{C}}(X)$ is a compact Hausdorff space. The \emph{mapping class group} of $X$ in $\mathcal{C}$ is the group
\[
\MCG_{\mathcal{C}}(X) = \pi_0(\Aut_{\mathcal{C}}(X)) = \Aut_{\mathcal{C}}(X)/\Aut_{\mathcal{C},0}(X),
\]
where $\Aut_{\mathcal{C},0}(X)$ is the connected component of the identity in $\Aut_{\mathcal{C}}(X)$.
\end{definition}

We are interested in mapping class groups of closed flat Riemannian orbifolds $\mathcal{O}$. We would like such a group to have a canonical action on the space of flat metrics on $\mathcal{O}$, i.e.\ we would like the automorphisms of $\mathcal{O}$ in our category to preserve the set of flat metrics on $\mathcal{O}$. In order to accomplish this, we define the \emph{affine category} $\mathbf{Aff}$, as follows. Let the objects of $\mathbf{Aff}$ be quotients $\Gamma \backslash \RR^n$ for $n \geq 0$ and discrete cocompact subgroups $\Gamma \leq \Aff(n)$. Given two objects $\mathcal{O} = \Gamma \backslash \RR^n$ and $\mathcal{O}' = \Gamma' \backslash \RR^{n'}$, let the morphisms $f\colon \mathcal{O} \to \mathcal{O}'$ in $\mathbf{Aff}$ be the maps between (the underlying spaces of) $\mathcal{O}$ and $\mathcal{O}'$ such that $f(\Gamma \cdot x) \subseteq \Gamma' \cdot \widetilde{f}(x)$ for all $x \in \RR^n$, where $\widetilde{f}\colon \RR^n \to \RR^{n'}$ is an \emph{affine map}---that is, a map defined by $\widetilde{f}(x) = Ax+b$ for some $A \in \operatorname{Mat}_{n' \times n}(\RR)$ and $b \in \RR^{n'}$. It is easy to see that a composite of two such morphisms in $\mathbf{Aff}$ is again a morphism, and so $\mathbf{Aff}$ is indeed a category.

Let $\mathcal{O} = \Gamma \backslash \RR^n$ be a closed flat Riemannian orbifold. One may then verify that $\Aut_{\mathbf{Aff}}(\mathcal{O})$ consists of bijective maps $f\colon \mathcal{O} \to \mathcal{O}$ such that $f(\Gamma \cdot x) = \Gamma \cdot \widetilde{f}(x)$ for all $x \in \RR^n$, where $\widetilde{f} \in \Aff(n)$. We call such a map $f$ an \emph{affine diffeomorphism} of $\mathcal{O}$, and we write $\AffDiff(\mathcal{O})$ and $\AffDiff_0(\mathcal{O})$ for $\Aut_{\mathbf{Aff}}(\mathcal{O})$ and $\Aut_{\mathbf{Aff},0}(\mathcal{O})$, respectively. For the ease of notation, we also write $\MCG(\mathcal{O})$ for the mapping class group $\MCG_{\mathbf{Aff}}(\mathcal{O}) = \AffDiff(\mathcal{O}) / \AffDiff_0(\mathcal{O})$.

\subsection{Classifying spaces}

We give a brief introduction to classifying spaces for families of subgroups. We follow \cite{luck2005}.

Given a discrete group $G$, a \emph{family} $\mathcal F$ of subgroups of $G$ is a non-empty set of subgroups that is closed under conjugation and taking subgroups in the following sense: if $H\in\mathcal F$, $g\in G$ and $K\leq H$, then $K\in\mathcal F$ and $gHg^{-1}\in\mathcal F$. We are considering the case when $\mathcal F$ is the family of finite subgroups of $G$. 

Let $\mathcal F$ be a family of subgroups of $G$. A model $E_{\mathcal F}(G)$ for the classifying $G$-$CW$-complex for the family $\mathcal F$ of subgroups is a $G$-$CW$-complex $E_{\mathcal F}(G)$ which has the following properties:
\begin{enumerate}
\item All isotropy groups of $E_{\mathcal F}(G)$ belong to $\mathcal F$.
\item For any $G$-$CW$-complex $Y$, whose isotropy groups belong to $\mathcal F$, there is up to $G$-homotopy precisely one $G$-map $Y \rightarrow X$.
\end{enumerate}

\begin{thm}[\protect{\cite{luck2005}, Theorem 1.9}] \label{thm:model}
Let $\mathcal F$ be a family of subgroups of $G$. A $G$-CW-complex $X$ is a model for  $E_\mathcal F(G)$ if and only if all its isotropy groups belong to $\mathcal F$ and for each $H \in \mathcal F $ the $H$-fixed point set $X^{H}$ is weakly contractible. 
\end{thm}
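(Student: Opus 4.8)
The plan is to reduce Theorem~\ref{thm:model} to two standard facts from equivariant homotopy theory: the existence of one concrete model for $E_{\mathcal{F}}(G)$ whose fixed-point spaces can be computed by hand, and the equivariant Whitehead theorem, which says that a $G$-map of $G$-CW-complexes is a $G$-homotopy equivalence if and only if it induces a weak homotopy equivalence on $H$-fixed points for every subgroup $H \leq G$. The bridge between these and the statement is the adjunction $\operatorname{Map}_G(G/H \times B, X) \cong \operatorname{Map}(B, X^H)$, valid for a $G$-CW-complex $X$ and a CW-complex $B$ carrying the trivial $G$-action, together with the observation that for $H \in \mathcal{F}$ the $G$-CW-complexes $G/H \times D^k$ and $G/H \times S^k$ have all their isotropy groups conjugate to $H$, hence in $\mathcal{F}$; this is what turns assertions about $G$-maps into $X$ into assertions about the homotopy types of the spaces $X^H$. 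I would also use the elementary fact that $X^H$ is weakly contractible if and only if every map $S^k \to X^H$ extends over $D^{k+1}$ for all $k \geq -1$, with the convention $S^{-1} = \emptyset$.

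First I would exhibit a convenient model $E$, taking it to be the infinite join of the $G$-set $\coprod_{H \in \mathcal{F}} G/H$ with itself (such a model is constructed in \cite{luck2005}). Its isotropy groups are conjugates of the various $H \in \mathcal{F}$ and so lie in $\mathcal{F}$, and for any subgroup $K \leq G$ the fixed-point space $E^K$ is the infinite join of $\coprod_{H \in \mathcal{F}}(G/H)^K$ with itself. If $K \in \mathcal{F}$ this is the infinite join of a non-empty space, because $eK \in (G/K)^K$, and an infinite join of a non-empty space is contractible; if $K \notin \mathcal{F}$ then each $(G/H)^K$ is empty, since a $K$-fixed coset $gH$ would force $g^{-1}Kg \leq H \in \mathcal{F}$ and hence $K \in \mathcal{F}$. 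Thus $E$ satisfies both conditions in the statement. One then checks that $E$ has the universal property by the standard obstruction-theoretic induction: for a $G$-CW-complex $Y$ with isotropy in $\mathcal{F}$, a $G$-map $Y \to E$ is built up over the equivariant skeleta of $Y$, and any two such maps are joined by a $G$-homotopy built up over the equivariant skeleta of $Y \times [0,1]$ relative to $Y \times \{0,1\}$; each extension over an equivariant cell reduces, via the adjunction, to extending a map into the relevant $E^H$ across an inclusion $S^{m-1} \hookrightarrow D^m$, and the obstruction vanishes because $E^H$ is weakly contractible.

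With $E$ available, both implications are formal. For the ``if'' direction, let $X$ be a $G$-CW-complex with isotropy in $\mathcal{F}$ such that $X^H$ is weakly contractible for every $H \in \mathcal{F}$. The universal property of $E$ yields a $G$-map $f \colon X \to E$. For a subgroup $H \leq G$: if $X^H$ and $E^H$ are both empty, then $f^H$ is trivially a weak equivalence; otherwise $H$ is contained in some isotropy group of $X$ or of $E$, so $H \in \mathcal{F}$ (the family being closed under passing to subgroups), whence $X^H$ and $E^H$ are both weakly contractible and $f^H$ is again a weak equivalence. The equivariant Whitehead theorem then shows that $f$ is a $G$-homotopy equivalence, so $X$ is a model for $E_{\mathcal{F}}(G)$. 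For the ``only if'' direction, if $X$ is a model then it is $G$-homotopy equivalent to $E$ (any two models are $G$-homotopy equivalent, each being terminal up to $G$-homotopy among $G$-CW-complexes with isotropy in $\mathcal{F}$), so $X^H \simeq E^H$ for every $H$; taking $H \in \mathcal{F}$ shows $X^H$ is weakly contractible, while the condition on isotropy groups holds by the definition of a model.

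The step I expect to require the most care is the equivariant obstruction theory underlying both the verification of the universal property for $E$ and the equivariant Whitehead theorem; this is the only genuinely non-trivial input, and I would cite it from \cite{luck2005} and the references therein rather than reprove it. If a self-contained argument avoiding Whitehead's theorem were wanted, the two needed directions can be run directly. To see that $X^H$ is weakly contractible when $X$ is a model, note that a map $S^k \to X^H$ ($k \geq -1$) corresponds to a $G$-map $G/H \times S^k \to X$; since $G/H \times D^{k+1}$ has isotropy in $\mathcal{F}$, the universal property provides a $G$-map $G/H \times D^{k+1} \to X$, and uniqueness up to $G$-homotopy of $G$-maps into $X$ makes this $G$-homotopic to one restricting to the given map, which therefore extends over $D^{k+1}$ by the homotopy extension property (the case $k = -1$ giving $X^H \neq \emptyset$). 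Conversely, constructing a $G$-map $Y \to X$ cell by cell out of weak contractibility of the spaces $X^H$, together with the analogous construction of $G$-homotopies, establishes the universal property; hence the two conditions characterise the models for $E_{\mathcal{F}}(G)$.
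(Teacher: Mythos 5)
The paper does not prove this statement; it is imported verbatim from L\"uck's survey (cited as \cite{luck2005}, Theorem~1.9), so there is no in-paper proof to compare against. Your argument is correct and is essentially the standard proof given in that reference and in tom~Dieck's book: build the Milnor-style model $E = {*}^{\infty}\bigl(\coprod_{H\in\mathcal F}G/H\bigr)$, compute $E^K$ (contractible for $K\in\mathcal F$, empty otherwise), verify the universal property by equivariant obstruction theory over the skeleta, and then for general $X$ compare against $E$ via the equivariant Whitehead theorem, splitting into the cases $H\in\mathcal F$ (both fixed sets weakly contractible) and $H\notin\mathcal F$ (both fixed sets empty, by closure of $\mathcal F$ under conjugation and subgroups). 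The only point worth flagging is that the forward implication is really immediate from the definition and the uniqueness of $E_{\mathcal F}(G)$ up to $G$-homotopy, so the Whitehead theorem is only genuinely needed for the converse; your final paragraph already notes a Whitehead-free route, which is the one L\"uck actually takes.
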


\subsection{Some algebraic number and Galois theories}

Here we review the concepts appearing in the statement of Theorem~\ref{thm:commensurable}.

Recall that given two modules $M$ and $N$ over a commutative ring $R$, the \emph{tensor product} of $M$ and $N$ over $R$ is an $R$-module $M \otimes_R N$ together with an $R$-bilinear map $\iota\colon M \times N \to M \otimes_R N$ satisfying the following universal property: for any $R$-module $L$ and any $R$-bilinear map $f\colon M \times N \to L$, there exists a unique $R$-linear map $f'\colon M \otimes_R N \to L$ such that $f = f' \circ \iota$. The elements of $M \otimes_R N$ are $R$-linear combinations of elements of the form $m \otimes n := \iota(m,n)$ for $m \in M$ and $n \in N$. Given an $R$-linear map $f\colon M \to M'$ and an $R$-module $N$, we also have an $R$-linear map $f \otimes_R N \colon M \otimes_R N \to M' \otimes_R N$ defined by $(f \otimes_R N)(m \otimes n) = f(m) \otimes n$. For simplicity of notation, we write $f \otimes N$ and $M \otimes N$, where $M$ is an $R$-module and $f\colon M \to M'$ an $R$-linear map, for $f \otimes_{\QQ} N$ and $M \otimes_{\QQ} N$ (respectively) if $M$ is a $\QQ$-vector space, and for $f \otimes_{\ZZ} N$ and $M \otimes_{\ZZ} N$ (respectively) otherwise.

By a \emph{number field} we mean a finite field extension of $\QQ$, i.e.\ a subfield $K \subseteq \CC$ such that $[K:\QQ] := \dim_{\QQ}(K) < \infty$. We say a number field $K$ is \emph{Galois} if the group $\Gal(K/\QQ) = \Aut(K)$ has order $[K:\QQ]$, and \emph{abelian} if furthermore $\Gal(K/\QQ)$ is abelian. By the Galois correspondence, given a Galois number field $K$ there is a bijection $\{ \text{subgroups of } \Gal(K/\QQ) \} \leftrightarrow \{ \text{subfields of } K \}$, given by $H \mapsto \operatorname{Fix}(H) = \{ x \in K \mid \sigma(x)=x \text{ for all } x \in H \}$ and $\Aut_{K'}(K) = \{ \sigma \in \Gal(K/\QQ) \mid \sigma|_{K'} = \Id_{K'} \} \mapsfrom K'$. Under this bijection, a subgroup $H \leq \Gal(K/\QQ)$ is normal if and only if the corresponding subfield $K' \subseteq K$ is Galois; in particular, if $K$ is an abelian number field then all subfields of $K$ are Galois.

Given a number field $K$, its \emph{ring of integers} $\mathfrak{o}_K$ is the set of elements $x \in K$ that are roots of a monic polynomial with coefficients in $\ZZ$; it can be shown that $\mathfrak{o}_K$ is a subring of $K$. Moreover, an associative $K$-algebra $L$ is a \emph{division algebra} if it has a multiplicative identity $1 \in L$ and every non-zero element $x \in L$ is invertible, i.e.\ there exists $x^{-1} \in L$ such that $xx^{-1} = 1 = x^{-1}x$. If $K$ is a number field and $L$ is a $K$-algebra, then an \emph{$\mathfrak{o}_K$-lattice} in $L$ is a finitely generated $\mathfrak{o}_K$-submodule of $L$ that spans $L$ over $K$; an $\mathfrak{o}_K$-lattice in $L$ that is a subring of $L$ is called an \emph{$\mathfrak{o}_K$-order}.

\section{Models for classifying spaces} \label{sec:EG}

In this section we prove Theorem~\ref{thm:classifying}.  Throughout this section, $\Gamma \leq \Aff(n)$ is a crystallographic group with holonomy $\hol = \tau(\Gamma)$ and translation lattice $\lat = \ker(\tau) \cap \Gamma$, where $\tau\colon \Aff(n) \to \GL{\RR}$ is the canonical map.  Recall that for a group $G$ and a subgroup $H \leq G$, we denote the normalizer and the centralizer of $H$ in $G$ by $\nmlr[H]{G}$ and $\ctlr[H]{G}$, respectively.

The following is an auxiliary lemma that will be used in this section as well as in the following section.

\begin{lem} \label{lem:ctlr-trans}
We have $\ctlr[\Gamma]{\Aff(n)} \leq \ker(\tau)$ and $\tau(\nmlr[\Gamma]{\Aff(n)}) \leq \nmlr{\Aut(\lat)}$.
\end{lem}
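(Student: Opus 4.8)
The plan is to prove the two containments separately, using in both cases the fact that $\lat = \ker(\tau) \cap \Gamma$ is the unique maximal abelian (equivalently, maximal normal free-abelian) subgroup of $\Gamma$ of finite index, as guaranteed by the first Bieberbach theorem (Theorem~\ref{thm:bieb}). The key structural observation is that, because $\lat$ is characteristic in $\Gamma$ (being the unique subgroup with this maximality property), conjugation by any element of $\nmlr[\Gamma]{\Aff(n)}$ preserves $\lat$; hence we get a homomorphism $\nmlr[\Gamma]{\Aff(n)} \to \Aut(\lat)$, and the content of the lemma is to identify how this factors through $\tau$.

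For the first containment $\ctlr[\Gamma]{\Aff(n)} \leq \ker(\tau)$: let $\alpha = (A, b) \in \ctlr[\Gamma]{\Aff(n)}$, so $\alpha$ commutes with every element of $\Gamma$. In particular, $\alpha$ commutes with every $\lambda = (I, v) \in \lat \cong \ZZ^n$. Writing out the group law in $\Aff(n) = \GL{\RR} \ltimes \RR^n$, the commutator $\alpha \lambda \alpha^{-1} \lambda^{-1}$ equals the pure translation $(I, Av - v)$, so commuting with $\lambda$ forces $Av = v$ for all $v$ in the lattice $\lat$. Since $\lat$ spans $\RR^n$ over $\RR$ (it is a cocompact lattice), this gives $A = I$, i.e.\ $\alpha \in \ker(\tau)$. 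First I would set up this computation cleanly with the semidirect product conventions already fixed in the paper, and then the conclusion is immediate.

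For the second containment $\tau(\nmlr[\Gamma]{\Aff(n)}) \leq \nmlr{\Aut(\lat)}$: here I need to be careful about what $\nmlr{\Aut(\lat)}$ means, namely the normalizer of $\hol$ inside $\Aut(\lat) \cong \GL{\ZZ}$, where $\hol$ is viewed as a subgroup of $\Aut(\lat)$ via its linear action on the lattice $\lat$ (the holonomy action is, by Bieberbach, integral with respect to some basis of $\lat$). Take $\alpha \in \nmlr[\Gamma]{\Aff(n)}$ with $A = \tau(\alpha)$. Conjugation by $\alpha$ is an automorphism of $\Gamma$ preserving the characteristic subgroup $\lat$, and on $\lat \cong \ZZ^n \leq \RR^n$ it acts precisely by the linear map $A$ (one checks $\alpha (I,v) \alpha^{-1} = (I, Av)$), so $A \in \Aut(\lat)$. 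It remains to see $A$ normalizes $\hol$ inside $\Aut(\lat)$; this follows because conjugation by $\alpha$ induces an automorphism of the quotient $\Gamma/\lat \cong \hol$ that is compatible, via $\tau$, with the conjugation action of $A$ on the linear holonomy representation: concretely, for $h = \tau(\gamma) \in \hol$ with $\gamma \in \Gamma$, one has $A h A^{-1} = \tau(\alpha \gamma \alpha^{-1}) \in \tau(\Gamma) = \hol$, and $\alpha\gamma\alpha^{-1} \in \Gamma$ since $\alpha$ normalizes $\Gamma$. I would present this by first noting $\tau(\alpha\gamma\alpha^{-1}) = A\,\tau(\gamma)\,A^{-1}$ from multiplicativity of $\tau$, and then packaging it as $A\hol A^{-1} = \hol$.

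The main obstacle is purely bookkeeping: making sure the identification of $\hol$ with a subgroup of $\Aut(\lat) = \GL{\ZZ}$ is compatible with the ambient linear action on $\RR^n$, so that "$A$ normalizes $\hol$ in $\GL{\RR}$" and "$A$ normalizes $\hol$ in $\Aut(\lat)$" say the same thing. Once one fixes a $\ZZ$-basis of $\lat$ and observes that $\hol$, $A$, and $A\hol A^{-1}$ all act by integer matrices preserving $\lat$ (the last two because $\alpha$ normalizes $\Gamma$ hence preserves $\lat$), this compatibility is automatic, and the proof concludes. Neither step requires more than the semidirect product structure of $\Aff(n)$ together with the Bieberbach characterization of $\lat$.
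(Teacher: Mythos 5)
Your proof is correct and follows essentially the same route as the paper: you use the semidirect-product group law to show that commuting with every lattice translation forces the linear part $A$ to be the identity (since $\lat$ spans $\RR^n$), and you use the conjugation formula $\alpha(\Id,v)\alpha^{-1} = (\Id,Av)$ together with multiplicativity of $\tau$ to get $A \in \Aut(\lat)$ and $A\hol A^{-1} \subseteq \hol$. One small remark: the appeal to $\lat$ being characteristic in $\Gamma$ is an unnecessary (though valid) detour, since the computation you already carry out shows directly that $(\Id, A^{\pm 1}w) = \alpha^{\pm 1}(\Id,w)\alpha^{\mp 1} \in \Gamma \cap \ker(\tau) = \lat$ for $w \in \lat$, which is precisely how the paper argues.
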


\begin{proof}
Note that an element $(A,v) \in \ctlr[\Gamma]{\Aff(n)}$ commutes with every translation $(\Id,w) \in \Gamma$, implying that $Aw = w$. Since the translation lattice $\lat \leq \RR^n$ of $\Gamma$ is cocompact and in particular spans $\RR^n$ over $\RR$, it follows that $A = \Id$ and thus $\ctlr[\Gamma]{\Aff(n)} \leq \ker(\tau)$, as required.

On the other hand, for any $\alpha = (A,v) \in \nmlr[\Gamma]{\Aff(n)}$ and any translation $(\Id,w) \in \Gamma$, we have $(A,v)^{\pm 1} (\Id,w) (A,v)^{\mp 1} = (\Id,A^{\pm 1}w) \in \Gamma$, implying that $\tau(\alpha) = A \in \Aut(\lat)$. Moreover, for any $B \in \hol$ we have $\beta \coloneqq (B,u) \in \Gamma$ for some $u \in \RR^n$ and therefore $A^{\pm 1} B A^{\mp 1} = \tau(\alpha^{\pm 1}\beta\alpha^{\mp 1}) \in \tau(\Gamma) = \hol$, implying that $\tau(\alpha) = A$ normalizes $\hol$. Thus indeed $\tau(\nmlr[\Gamma]{\Aff(n)})$ is a subgroup of $\nmlr{\Aut(\lat)}$.
\end{proof}

Theorem~\ref{thm:classifying}\ref{it:class-cocomens} is a consequence of Lemmas \ref{lem:MCG-norm} and \ref{lem:MCG-quotients} below.

\begin{lem} \label{lem:MCG-norm}
The group $\AffDiff(\mathcal{O})$ can be identified with the quotient $\nmlr[\Gamma]{\Aff(n)}/\Gamma$. Moreover, under this identification we have $\AffDiff_0(\mathcal{O}) = \Gamma\ctlr[\Gamma]{\Aff(n)}/\Gamma$.
\end{lem}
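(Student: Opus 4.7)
The plan is to construct a continuous group homomorphism $\Phi\colon \nmlr[\Gamma]{\Aff(n)} \to \AffDiff(\mathcal{O})$ sending $\tilde f$ to the diffeomorphism of $\mathcal{O} = \Gamma\backslash\RR^n$ that it descends to, to show that $\Phi$ is surjective with kernel $\Gamma$ (so that the first assertion follows), and finally to identify the identity components on both sides by computing $\nmlr[\Gamma]{\Aff(n)}_0$ via Lemma~\ref{lem:ctlr-trans} and transporting across $\Phi$.

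For surjectivity and the kernel computation I would use the description of $\Aut_{\mathbf{Aff}}(\mathcal{O})$ recorded just before the lemma: every $f \in \AffDiff(\mathcal{O})$ admits a lift $\tilde f \in \Aff(n)$ with $f(\Gamma\cdot x) = \Gamma\cdot\tilde f(x)$ for all $x$. Given $\gamma \in \Gamma$, the affine map $g = \tilde f\gamma\tilde f^{-1}$ then satisfies $g(y)\in\Gamma y$ for every $y\in\RR^n$. Picking $y_0$ with trivial $\Gamma$-stabiliser (these form a dense open set, since stabilisers are non-trivial only on a countable union of proper affine subspaces of $\RR^n$), there is a unique $\delta_0\in\Gamma$ with $g(y_0) = \delta_0 y_0$, and the affine map $\delta_0^{-1}g$ fixes $y_0$ while still sending each $y$ into $\Gamma y$. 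Discreteness of $\Gamma$ together with continuity then force $\delta_0^{-1}g$ to equal the identity on a neighbourhood of $y_0$, and hence on all of $\RR^n$ since the map is affine. Thus $\tilde f\gamma\tilde f^{-1} = \delta_0 \in \Gamma$, so $\tilde f\in\nmlr[\Gamma]{\Aff(n)}$, which gives surjectivity. Applying the same argument to any lift of the identity on $\mathcal{O}$ shows $\ker\Phi = \Gamma$.

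For the identity component calculation I would first show $\nmlr[\Gamma]{\Aff(n)}_0 = \ctlr[\Gamma]{\Aff(n)}$. By Lemma~\ref{lem:ctlr-trans}, $\tau$ sends $\nmlr[\Gamma]{\Aff(n)}$ into $\Aut(\lat)$, which is discrete in $\GL{\RR}$ since $\lat$ is a lattice. Hence $\nmlr[\Gamma]{\Aff(n)}_0$ lies in $\ker\tau\cap\nmlr[\Gamma]{\Aff(n)}$, and a direct calculation identifies this intersection with $\{(\Id,v) : (\Id-B)v\in\lat \text{ for all }B\in\hol\} \subseteq \RR^n$. Since $\lat$ is discrete, the maximal vector subspace of this closed subgroup of $\RR^n$ is the $\hol$-fixed subspace $\{v : Bv=v \text{ for all } B\in\hol\}$, and a short centraliser computation (using that $(\Id,v)$ commutes with $(B,w)$ precisely when $Bv = v$) identifies this subspace with $\ctlr[\Gamma]{\Aff(n)}$. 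The latter is itself a vector subspace and so connected, establishing the asserted equality; taking the quotient by $\Gamma$ then gives the identity component of $\nmlr[\Gamma]{\Aff(n)}/\Gamma$ as $\Gamma\ctlr[\Gamma]{\Aff(n)}/\Gamma$.

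The main obstacle is the topological bookkeeping in the final step: $\Phi$ descends to a continuous group bijection $\nmlr[\Gamma]{\Aff(n)}/\Gamma \to \AffDiff(\mathcal{O})$ (continuity being immediate, as convergence of affine maps implies uniform convergence on compact sets), but in order to transfer the identity component calculation across $\Phi$ I need this map to be a homeomorphism. I plan to address this either by a hands-on neighbourhood-basis comparison based on a compact fundamental domain for the $\Gamma$-action, or by invoking a standard open-mapping theorem for continuous surjective homomorphisms between locally compact, second-countable topological groups. Once this is in place, $\AffDiff_0(\mathcal{O}) = \Gamma\ctlr[\Gamma]{\Aff(n)}/\Gamma$ follows immediately.
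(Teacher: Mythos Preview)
Your proposal is correct and, for the first assertion, essentially identical to the paper's proof: both construct $\Phi$ in the obvious way, pick a point with trivial $\Gamma$-stabiliser, and use discreteness plus affineness to force any lift of an affine diffeomorphism into $\nmlr[\Gamma]{\Aff(n)}$ (and any lift of the identity into $\Gamma$).

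For the identity component you organise things differently. The paper argues directly in $\AffDiff(\mathcal{O})$: one inclusion comes from the path $t\mapsto(\Id,tv)$ inside $\ctlr[\Gamma]{\Aff(n)}$ together with continuity of $\Phi$; for the other, the paper lifts a path in $\AffDiff_0(\mathcal{O})$ through the covering map $\nmlr[\Gamma]{\Aff(n)}\to\nmlr[\Gamma]{\Aff(n)}/\Gamma$ and uses discreteness of $\Gamma$ to conclude the lifted endpoint centralises $\Gamma$. You instead compute $\nmlr[\Gamma]{\Aff(n)}_0=\ctlr[\Gamma]{\Aff(n)}$ upstairs---a clean argument via Lemma~\ref{lem:ctlr-trans} and the structure of closed subgroups of $\RR^n$---and then pass to the quotient. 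Your route has the virtue of making explicit the point the paper glosses over: in order to go from a path in $\AffDiff_0(\mathcal{O})$ to a path in $\nmlr[\Gamma]{\Aff(n)}/\Gamma$ (as the paper does), or equivalently to transport identity components across $\bar\Phi$ (as you do), one needs $\bar\Phi^{-1}$ to be continuous. Either of your proposed fixes works; the open-mapping theorem is the cleaner option since both sides are locally compact and second countable.
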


\begin{proof}
Let $\Gamma \leq \Aff(n)$ be the corresponding crystallographic group, so that $\mathcal{O} = \Gamma \backslash \RR^n$. Let $F \in \nmlr[\Gamma]{\Aff(n)}$. Then for any $\gamma \in \Gamma$ there exists $\gamma' \in \Gamma$ such that $F \circ \gamma = \gamma' \circ F$, implying that $F(\Gamma \cdot x) \subseteq \Gamma \cdot F(x)$ for all $x \in \RR^n$, and therefore $F$ induces a map $f\colon \mathcal{O} \to \mathcal{O}$ given by $f(\Gamma \cdot x) = \Gamma \cdot F(x)$. Moreover, the map $f$ is surjective since $F$ is surjective, and injective since $F(\Gamma \cdot x) = \Gamma \cdot F(x)$ for all $x \in \RR^n$ (which follows since for any $\gamma' \in \Gamma$ there exists $\gamma \in \Gamma$ such that $F \circ \gamma = \gamma' \circ F$). Thus $f$ is an affine diffeomorphism of $\mathcal{O}$, giving a map $\Phi\colon \nmlr[\Gamma]{\Aff(n)} \to \AffDiff(\mathcal{O})$ defined by $\Phi(F) = f$. It is easy to see that $\Phi$ is a continuous group homomorphism. We claim that $\Phi$ is surjective and has kernel $\Gamma$.

In order to see that $\Phi$ is surjective, note first that if an affine transformation $\alpha \in \Aff(n)$ satisfies $\alpha(x) \in \Gamma \cdot x$ for all $x \in \RR^n$, then $\alpha \in \Gamma$. Indeed, in this case for every $x \in \RR^n$ there exists $\gamma_x \in \Gamma$ such that $\alpha(x) = \gamma_x(x)$. On the other hand, if we take $x \in \RR^n$ such that $\Gamma \cdot x \in \mathcal{O}$ is a regular point then there exists a connected open neighbourhood $U \subseteq \RR^n$ of $x$ such that all points in $\Gamma \cdot U \subseteq \mathcal{O}$ are regular. The facts that $U$ is connected and $\alpha$ is continuous then imply that $\gamma_y = \gamma_x$ for all $y \in U$, and therefore $\alpha|_U = \gamma_x|_U$. Since $U$ is open and $\alpha,\gamma_x \in \Aff(n)$, it follows that $\alpha = \gamma_x \in \Gamma$, as required.

Now let $f \in \AffDiff(\mathcal{O})$, and let $\widetilde{f} \in \Aff(n)$ be a lift of $f$---that is, an affine linear transformation such that $f(\Gamma \cdot x) = \Gamma \cdot \widetilde{f}(x)$ for all $x \in \RR^n$.  We claim that the map $\widetilde{f} \in \Aff(n)$ normalizes~$\Gamma$. Indeed, since $f(\Gamma \cdot x) = \Gamma \cdot \widetilde{f}(x)$, it follows that $\widetilde{f}(\Gamma \cdot x) \subseteq \Gamma \cdot \widetilde{f}(x)$ for all $x \in \RR^n$ and hence $\widetilde{f}\Gamma\widetilde{f}^{-1} \cdot y \in \Gamma \cdot y$ for all $y \in \RR^n$. Given $\gamma \in \Gamma$, we thus have $\widetilde{f}\gamma\widetilde{f}^{-1} \cdot y \in \Gamma \cdot y$ for all $y \in \RR^n$, and therefore $\widetilde{f}\gamma\widetilde{f}^{-1} \in \Gamma$; this shows that $\widetilde{f}\Gamma\widetilde{f}^{-1} \subseteq \Gamma$. Conversely, let $\gamma \in \Gamma$. Given an $x \in \RR^n$, we have $\widetilde{f}(y) = \gamma \cdot \widetilde{f}(x)$, where $y = \widetilde{f}^{-1}\gamma\widetilde{f} \cdot x$, implying that $f(\Gamma \cdot y) = \Gamma \cdot \widetilde{f}(y) = \Gamma \cdot \widetilde{f}(x) = f(\Gamma \cdot x)$; since $f$ is injective, it follows that $\Gamma \cdot y = \Gamma \cdot x$ and therefore $\widetilde{f}^{-1}\gamma\widetilde{f} \cdot x = y \in \Gamma \cdot x$. Since this holds for all $x \in \RR^n$, it follows that $\widetilde{f}^{-1}\gamma\widetilde{f} \in \Gamma$, implying that $\widetilde{f}\Gamma\widetilde{f}^{-1} = \Gamma$, as claimed. This shows that $\Phi$ is surjective.

In order to compute $\ker(\Phi)$, let $f \in \AffDiff(\mathcal{O})$. Since $f(\Gamma \cdot x) = \Gamma \cdot \widetilde{f}(x)$ for all $x \in \RR^n$, we have $f = \Id$ if and only if $\widetilde{f}(x) \in \Gamma \cdot x$ for all $x \in \RR^n$, which happens if and only if $\widetilde{f} \in \Gamma$. Therefore, given $F \in \nmlr[\Gamma]{\Aff(n)}$, we have $\Phi(F) = \Id$ if and only if $F \in \Gamma$. Thus $\ker(\Phi) = \Gamma$, as required.  In particular, $\Phi = \overline\Phi \circ q_\Gamma$ for a continuous group isomorphism $\overline\Phi\colon \nmlr[\Gamma]{\Aff(n)}/\Gamma \to \AffDiff(\mathcal{O})$, where $q_\Gamma\colon \nmlr[\Gamma]{\Aff(n)} \to \nmlr[\Gamma]{\Aff(n)}/\Gamma$ is the quotient map.

In order to show that $\AffDiff_0(\mathcal{O})$ is identified with $\Gamma\ctlr[\Gamma]{\Aff(n)}/\Gamma$ under the isomorphism $\overline\Phi$, we need to show that $\Phi(\ctlr[\Gamma]{\Aff(n)}) = \AffDiff_0(\mathcal{O})$.  Note first that a translation $\lambda = (\Id,v) \in \Aff(n)$ commutes with a transformation $\alpha = (A,w) \in \Aff(n)$ if and only if $Av = v$, implying that if $(\Id,v)$ centralizes $\Gamma$ then so does $(\Id,tv)$ for any $t \in [0,1]$. This shows that $\ctlr[\Gamma]{\Aff(n)} \cap \ker(\tau) \subseteq \pi_0(\nmlr[\Gamma]{\Aff(n)})$, implying by Lemma~\ref{lem:ctlr-trans} that $\ctlr[\Gamma]{\Aff(n)} \subseteq \pi_0(\nmlr[\Gamma]{\Aff(n)})$.  Since $\Phi$ is continuous, and so sends connected subsets to connected subsets, it follows that $\Phi(\ctlr[\Gamma]{\Aff(n)}) \subseteq \AffDiff_0(\mathcal{O})$. Conversely, let $f \in \AffDiff_0(\mathcal{O})$. Then there exists a path $\varphi\colon [0,1] \to \nmlr[\Gamma]{\Aff(n)}/\Gamma$ such that $\varphi(0) = \Gamma$ and $\varphi(1) = \widetilde{f}\Gamma$, where $\widetilde{f} \in \Phi^{-1}(f)$. Since $\nmlr[\Gamma]{\Aff(n)}$ is a Lie group and $\Gamma$ is its discrete subgroup, it follows that the quotient map $q_\Gamma\colon \nmlr[\Gamma]{\Aff(n)} \to \nmlr[\Gamma]{\Aff(n)}/\Gamma$ is a covering map, and therefore $\varphi$ lifts to a path $\widetilde\varphi\colon [0,1] \to \nmlr[\Gamma]{\Aff(n)}$ such that $\widetilde\varphi(0) = \Id$ and $\widetilde\varphi(1) = \gamma' \widetilde{f}$ for some $\gamma' \in \Gamma$. Given $\gamma \in \Gamma$, we thus have a path $\psi_\gamma\colon [0,1] \to \Gamma$ given by $\psi_\gamma(t) = \widetilde\varphi(t) \gamma \widetilde\varphi(t)^{-1}$; since $\Gamma$ is discrete, it follows that $\psi_\gamma$ is constant and therefore $(\gamma'\widetilde{f}) \gamma (\gamma'\widetilde{f})^{-1} = \psi_\gamma(1) = \psi_\gamma(0) = \gamma$. Thus $\lambda = \gamma'\widetilde{f} \in \Aff(n)$ centralizes $\Gamma$; this shows that $\AffDiff_0(\mathcal{O}) = \Phi(\ctlr[\Gamma]{\Aff(n)})$, as required.
\end{proof}

\begin{rmk} \label{rmk:depends-on-category}
Lemma~\ref{lem:MCG-norm} allows us to identify $\MCG(\mathcal{O})$ with the outer automorphism group $\Out(\Gamma)$ of $\Gamma$.  Indeed, we have a map $\nmlr[\Gamma]{\Aff(n)} \to \Aut(\Gamma)$ defined by $\alpha \mapsto [\gamma \mapsto \alpha\gamma\alpha^{-1}]$, which is surjective by Theorem~\ref{thm:bieb}\ref{it:bieb-iso} and has kernel $\ctlr[\Gamma]{\Aff(n)}$ by definition. Thus we have an isomorphism $\Psi\colon \nmlr[\Gamma]{\Aff(n)}/\ctlr[\Gamma]{\Aff(n)} \to \Aut(\Gamma)$. Clearly, the subgroup of inner automorphisms of $\Gamma$ is precisely $\Psi(\Gamma\ctlr[\Gamma]{\Aff(n)}/\ctlr[\Gamma]{\Aff(n)})$, implying that we have an isomorphism $\nmlr[\Gamma]{\Aff(n)}/\Gamma\ctlr[\Gamma]{\Aff(n)} \cong \Out(\Gamma)$. But the mapping class group $\MCG(\mathcal{O})$ can be identified with $\nmlr[\Gamma]{\Aff(n)}/\Gamma\ctlr[\Gamma]{\Aff(n)}$ using Lemma~\ref{lem:MCG-norm}.

It is worth noting that a mapping class group can be defined in categories other than $\mathbf{Aff}$, such as $\mathbf{Top}$ (topological spaces and continuous maps), $\mathbf{PL}$ (piecewise linear manifolds and piecewise linear maps) and $\mathbf{Diff}$ (differentiable manifolds and smooth maps).  In \cite[Remark~4.7]{Bettiol-Derdzinski-Piccione}, an isomorphism $\MCG_{\mathcal{C}}(M) \cong \Out(\pi_1(M))$ for $\mathcal{C} = \mathbf{Diff}$ and a closed flat manifold $M$ is described.  However, we note that this isomorphism does not hold in general: for instance, for any $n$-torus $\mathbb{T}^n = \RR^n/\ZZ^n$, where $n \geq 5$, the canonical map $\MCG_{\mathcal{C}}(\mathbb{T}^n) \to \Out(\pi_1(\mathbb{T}^n)) = \GL{\ZZ}$ is not injective \cite[Theorem~4.1]{Hatcher-torus}.  The same is true for $\mathcal{C} = \mathbf{Top}$ and $\mathcal{C} = \mathbf{PL}$.

Moreover, even though this does not fall into the setup for mapping class groups above, one may also consider ``mapping class groups'' in the homotopy category $\mathcal{C} = \mathbf{hTop}$ (topological spaces and homotopy classes of continuous maps), where instead of a faithful functor $F_{\mathcal{C}} \colon \mathcal{C} \to \mathbf{Top}$ we consider the full forgetful functor $\mathbf{Top} \to \mathbf{hTop}$ and define the remaining concepts similarly to the construction above.  If $\mathcal{O}$ is a closed flat manifold, then $\MCG_{\mathbf{hTop}}(\mathcal{O})$---the group of self-homotopy equivalences of $\mathcal{O}$ modulo homotopy---can be identified with $\Out(\pi_1(\mathcal{O}))$: this follows since the universal cover of $\mathcal{O}$ is contractible \cite[Proposition~1B.9]{Hatcher}.  Using the notion of orbi-maps and homotopies between them \cite{Yamasaki,Takeuchi}, this identification can be generalised to any closed flat orbifold $\mathcal{O}$.
\end{rmk}

In view of Lemma~\ref{lem:MCG-norm}, from now on we will identify $\AffDiff(\mathcal{O})$ and $\MCG(\mathcal{O})$ with $\nmlr[\Gamma]{\Aff(n)}/\Gamma$ and $\pi_0(\nmlr[\Gamma]{\Aff(n)}/\Gamma)$, respectively.

Note that by construction, the action of $\AffDiff(\mathcal{O})$ on $\Tflat$ is induced by the action of $\nmlr[\Gamma]{\Aff(n)}$ on $\Iso(n) \backslash \{ \alpha \in \Aff(n) \mid \alpha\Gamma\alpha^{-1} \subseteq \Iso(n) \}$ by right multiplication. By Lemmas \ref{lem:ctlr-trans} and \ref{lem:MCG-norm}, any element of $\AffDiff_0(\mathcal{O})$ is of the form $\Gamma\lambda$ for a translation $\lambda$ and therefore acts trivially on $\Tflat$, implying that the action of $\AffDiff(\mathcal{O})$ induces an action of $\MCG(\mathcal{O})$ on $\Tflat$.

The following result relates $\MCG(\mathcal{O})$ to the group $\tNG = \tau(\nmlr[\Gamma]{\Aff(n)})$ studied later. Here, both of these groups are seen as quotients of $\nmlr[\Gamma]{\Aff(n)}$.

\begin{lem} \label{lem:MCG-quotients}
There exists a finite normal subgroup $G \unlhd \MCG(\mathcal{O})$ such that $\MCG(\mathcal{O})/G = \tNG/\hol$.
\end{lem}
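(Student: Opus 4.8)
The plan is to present all three groups involved as quotients of the single Lie group $N \coloneqq \nmlr[\Gamma]{\Aff(n)}$ and then apply the third isomorphism theorem. By Lemma~\ref{lem:MCG-norm} and the identification adopted after it, $\MCG(\mathcal O) = N/\Gamma\ctlr[\Gamma]{\Aff(n)}$, and $\Gamma\ctlr[\Gamma]{\Aff(n)} \unlhd N$ (it contains the normal subgroup $\Gamma \unlhd N$, and $\Gamma\ctlr[\Gamma]{\Aff(n)}/\Gamma = \AffDiff_0(\mathcal O)$ is the identity component of $\AffDiff(\mathcal O) = N/\Gamma$, hence normal). On the other hand, the computation in the proof of Lemma~\ref{lem:ctlr-trans} shows that $\hol = \tau(\Gamma)$ is normalised by every element of $\tNG = \tau(N)$, so $\tNG/\hol$ is a group and is the image of $N$ under the composite $N \xrightarrow{\tau} \tNG \twoheadrightarrow \tNG/\hol$; thus $\tNG/\hol \cong N/H$, where $H \coloneqq N \cap \tau^{-1}(\hol)$ is the kernel of this composite. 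The subgroup $H$ is normal in $N$ (being the preimage of $\hol \unlhd \tNG$) and contains $\Gamma\ctlr[\Gamma]{\Aff(n)}$, since $\Gamma \subseteq \tau^{-1}(\hol)$ and, by Lemma~\ref{lem:ctlr-trans}, $\ctlr[\Gamma]{\Aff(n)} \subseteq \ker\tau \subseteq \tau^{-1}(\hol)$. Setting $G \coloneqq H/\Gamma\ctlr[\Gamma]{\Aff(n)} \unlhd \MCG(\mathcal O)$, the third isomorphism theorem yields $\MCG(\mathcal O)/G \cong N/H \cong \tNG/\hol$. It therefore remains only to prove that $G$ is finite.

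To this end I would describe $H$ and $\Gamma\ctlr[\Gamma]{\Aff(n)}$ explicitly on the level of translations. Identify $\ker\tau$ with $\RR^n$ and $\lat$ with the corresponding full-rank lattice. A direct computation of the kind already carried out in the proof of Lemma~\ref{lem:MCG-norm} shows that
\[
N \cap \ker\tau \;=\; L \coloneqq \{\, v \in \RR^n \mid (A-\Id)v \in \lat \text{ for all } A \in \hol \,\},
\]
while $\ctlr[\Gamma]{\Aff(n)}$ is the fixed subspace $V \coloneqq \{\, v \in \RR^n \mid Av = v \text{ for all } A \in \hol \,\}$. Since $\tau(H) = \hol = \tau(\Gamma)$ one gets $H = \Gamma L$, and since $\Gamma \cap \ker\tau = \lat$ one gets $\Gamma\ctlr[\Gamma]{\Aff(n)} \cap \ker\tau = \lat + V$. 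The composite $L \hookrightarrow H \twoheadrightarrow G$ is then surjective (because $H = \Gamma L$ and $\Gamma \subseteq \Gamma\ctlr[\Gamma]{\Aff(n)}$) with kernel $L \cap (\lat + V) = \lat + V$ (the last equality using $\lat \subseteq L$ and $V \subseteq L$), so $G \cong L/(\lat + V)$.

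Finally I would bound $L/(\lat+V)$ using the averaging projection $P \coloneqq \frac{1}{|\hol|}\sum_{A \in \hol} A$ onto $V$: for $v \in L$ we have $|\hol|\,v = |\hol|\,Pv + \sum_{A \in \hol}(v - Av)$, where $|\hol|\,Pv \in V$ and each $v-Av = -(A-\Id)v$ lies in $\lat$, so $|\hol|\,v \in \lat + V$. Hence $\lat + V \subseteq L \subseteq \frac{1}{|\hol|}(\lat+V) = \frac{1}{|\hol|}\lat + V$, and therefore $G \cong L/(\lat+V)$ embeds into $\big(\frac{1}{|\hol|}\lat + V\big)/(\lat+V)$, which is a quotient of $\frac{1}{|\hol|}\lat/\lat \cong (\ZZ/|\hol|\ZZ)^n$ and so finite (in fact $|G|$ divides $|\hol|^n$). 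This finishes the proof.

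The argument is essentially bookkeeping, and I expect the only mildly delicate points to be the verifications that the composites $N \xrightarrow{\tau} \tNG/\hol$ and $L \hookrightarrow H \twoheadrightarrow G$ have exactly the kernels claimed, which rely on the explicit descriptions of $H$, $L$, $V$ together with the normality facts recalled above; there is no substantive obstacle. As a sanity check, the resulting identification $G \cong N \cap \tau^{-1}(\hol) \big/ \Gamma\ctlr[\Gamma]{\Aff(n)}$ matches the description of $G$ given in the remark following Theorem~\ref{thm:classifying}.
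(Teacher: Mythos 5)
Your proof is correct. You and the paper share the same skeleton: identify $\MCG(\mathcal O) = N/\Gamma\ctlr[\Gamma]{\Aff(n)}$ and $\tNG/\hol = N/(N\cap\tau^{-1}(\hol))$ where $N=\nmlr[\Gamma]{\Aff(n)}$, observe the containment of the two normal subgroups, and reduce (via the third isomorphism theorem) to showing that $\Gamma\ctlr[\Gamma]{\Aff(n)}$ has finite index in $N\cap\tau^{-1}(\hol)$, which after intersecting with $\ker\tau$ becomes the statement that $\lat+V$ has finite index in $L$, with $V$ the $\hol$-fixed subspace and $L=\{v\in\RR^n\mid (A-\Id)v\in\lat\text{ for all }A\in\hol\}$. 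Where you diverge from the paper is in proving this last index-finiteness: the paper forms the linear map $\Theta(x)=(h(x)-x\mid h\in\hol)$ into $\bigoplus_{h\in\hol}\RR^n$, deduces that $L/V$ is discrete in $\ker\tau/V$ because it injects into the discrete group $\bigoplus_h\lat$, and then invokes the topological fact that a discrete subgroup of $\ker\tau/V$ sandwiched between two cocompact lattices (namely $\lat V/V$ and itself) must contain the smaller one with finite index. You instead use the averaging projection $P=\tfrac{1}{|\hol|}\sum_{A\in\hol}A$ to show directly that $|\hol|\cdot L\subseteq\lat+V$, giving the containment $\lat+V\subseteq L\subseteq\tfrac{1}{|\hol|}\lat+V$ and hence the explicit bound that $|G|$ divides $|\hol|^n$. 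Your argument is more elementary (no appeal to the theory of lattices in Lie groups) and more quantitative; the paper's is slicker in that it sidesteps the explicit averaging computation. Both are valid, and your identification of $G$ with $(N\cap\tau^{-1}(\hol))/\Gamma\ctlr[\Gamma]{\Aff(n)}$ matches the remark after Theorem~\ref{thm:classifying} as you note.
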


\begin{proof}
For brevity, we will write $N$ and $C$ for the groups $\nmlr[\Gamma]{\Aff(n)}$ and $\ctlr[\Gamma]{\Aff(n)}$, respectively.

Note that given $\alpha \in N$ we have $\tau(\alpha)\hol\tau(\alpha)^{-1} = \tau(\alpha\Gamma\alpha^{-1}) = \tau(\Gamma) = \hol$, implying that $\hol$ is normal in $\tau(N)$. Thus, by Lemma~\ref{lem:MCG-norm}, we have well-defined groups $\MCG(\mathcal{O}) = (N/\Gamma)/(\Gamma C/\Gamma) = N/\Gamma C$ and $\tNG/\hol = (N/\ker(\tau|_N))/\hol = N/(\tau|_N)^{-1}(\hol)$. Since $\Gamma C \leq \Gamma\ker(\tau) = \tau^{-1}(\hol)$ (by Lemma~\ref{lem:ctlr-trans}) and $\Gamma C \leq N$, we also have $\Gamma C \leq (\tau|_N)^{-1}(\hol)$. It is then enough to show that $\Gamma C$ has finite index in $(\tau|_N)^{-1}(\hol)$.

Consider the $\RR$-linear map $\Theta\colon \RR^n \to \bigoplus_{h \in \hol} \RR^n$ defined by $\Theta(x) = (h(x)-x \mid h \in \hol)$. It is easy to verify that given any translation $\lambda_x = (\Id,x) \in \Aff(n)$, we have $\Theta(x) = 0$ if and only if $\lambda_x \in C$, whereas $\Theta(x) \in \bigoplus_{h \in \hol} \lat$ if and only if $\lambda_x \in N$. Since $C \leq \ker(\tau)$ by Lemma~\ref{lem:ctlr-trans}, it follows that $\Theta$ induces a continuous injection $\Theta'\colon \ker(\tau|_N)/C \to \bigoplus_{h \in \hol} \lat$. Since $\bigoplus_{h \in \hol} \lat$ is discrete in $\bigoplus_{h \in \hol} \RR^n$, it follows that $\ker(\tau|_N)/C$ is discrete in $\ker(\tau)/C$. However, since $\lat$ spans $\RR^n$ over $\RR$, it follows that $\lat C/C$ spans $\ker(\tau)/C$ over $\RR$. Since $\Gamma \subseteq N$ and $C \subseteq \ker(\tau|_N)$, we have $\lat C \subseteq \ker(\tau|_N)$, implying that $\ker(\tau|_N)/C$ and $\lat C/C$ are both cocompact lattices in $\ker(\tau)/C$; therefore, $\lat C$ has finite index in $\ker(\tau|_N)$. Since $\lat C \leq \Gamma C$ and since $\ker(\tau|_N)$ has finite index in $(\tau|_N)^{-1}(\hol)$, it follows that $\Gamma C$ has finite index in $(\tau|_N)^{-1}(\hol)$, as required.
\end{proof}

We now prove Theorem~\ref{thm:classifying}\ref{it:class-EG}.  In order to do that, we first identify the space $\Tflat$ with the space of $\hol$-invariant inner products in Lemma~\ref{lem:Teich-IP}.  We then use this description to show that $\Tflat$ is a classifying space for proper actions of $\tNG$ in Proposition~\ref{prop:EG}.

\begin{lem} \label{lem:Teich-IP}
Under the identification of $\OO{\RR} \backslash \GL{\RR}$ with the space of inner products on $\RR^n$ via $\OO{\RR} A \mapsto \langle {-},{-} \rangle_A$, where $\langle x,y \rangle_A \coloneqq x^T A^T A y$ for $x,y \in \RR^n$, the Teichm\"uller space $\OO{\RR} \backslash \mathcal{C}_\Gamma$ of flat metrics corresponds to the subspace of $\hol$-invariant inner products. Moreover, the action of $\tNG$ on the Teichm\"uller space can be described by $\langle {-},{-} \rangle \cdot B = \langle B{-},B{-} \rangle$ for $\langle {-},{-} \rangle \in \Tflat$ and $B \in \tNG$.
\end{lem}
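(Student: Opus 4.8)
The plan is to unwind the two identifications in turn. First I would recall that the map $\GL{\RR} \to \{\text{inner products on }\RR^n\}$, $A \mapsto \langle{-},{-}\rangle_A$, is surjective (every inner product has a symmetric positive-definite square root, hence a ``Cholesky-type'' factorization $A^TA$) with fibres exactly the left cosets $\OO{\RR}A$, since $A^TA = B^TB$ if and only if $(BA^{-1})^T(BA^{-1}) = \Id$, i.e.\ $BA^{-1} \in \OO{\RR}$; this is the standard diffeomorphism $\OO{\RR}\backslash\GL{\RR} \cong \{\text{inner products}\}$ underlying Theorem~\ref{thm:Bettiol-Derdzinski-Piccione}. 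Under this bijection I would then check that $\OO{\RR}A$ lies in the image of $\mathcal{C}_\Gamma = \{X \mid X\hol X^{-1}\subseteq\OO{\RR}\}$ precisely when $\langle{-},{-}\rangle_A$ is $\hol$-invariant. Concretely: $\langle{-},{-}\rangle_A$ is $\hol$-invariant means $\langle hx, hy\rangle_A = \langle x,y\rangle_A$ for all $h\in\hol$ and all $x,y$, i.e.\ $(Ah)^T(Ah) = A^TA$, i.e.\ $h^T(A^TA)h = A^TA$; by the uniqueness-of-square-root argument above this is equivalent to $AhA^{-1}\in\OO{\RR}$, i.e.\ to $A\in\mathcal{C}_\Gamma$. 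So the first assertion follows; I would note that well-definedness on cosets is automatic since $\mathcal{C}_\Gamma$ is a union of cosets $\OO{\RR}X$ (indeed if $X\in\mathcal{C}_\Gamma$ then $OX\in\mathcal{C}_\Gamma$ for $O\in\OO{\RR}$).

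For the second assertion, I would trace through the action. By the discussion following Lemma~\ref{lem:MCG-norm}, the $\MCG(\mathcal{O})$-action on $\Tflat = \OO{\RR}\backslash\mathcal{C}_\Gamma$ is induced by right multiplication of $\nmlr[\Gamma]{\Aff(n)}$ on $\Iso(n)\backslash\{\alpha \mid \alpha\Gamma\alpha^{-1}\subseteq\Iso(n)\}$, which after applying $\tau$ becomes the right-multiplication action of $\tNG = \tau(\nmlr[\Gamma]{\Aff(n)})$ on $\OO{\RR}\backslash\mathcal{C}_\Gamma$: an element $B\in\tNG$ sends the coset $\OO{\RR}X$ to $\OO{\RR}XB$ (one checks $XB\in\mathcal{C}_\Gamma$ when $X\in\mathcal{C}_\Gamma$ and $B\in\tNG$, using Lemma~\ref{lem:ctlr-trans} that $B$ normalizes $\hol$). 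Translating via $X\mapsto\langle{-},{-}\rangle_X$: the coset $\OO{\RR}XB$ corresponds to $\langle x,y\rangle_{XB} = x^TB^TX^TXBy = \langle Bx, By\rangle_X$. Hence $\langle{-},{-}\rangle\cdot B = \langle B{-}, B{-}\rangle$, as claimed.

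The only real subtlety, and the step I would be most careful with, is the equivalence ``$h^T(A^TA)h = A^TA$ for all $h\in\hol$ $\iff$ $AhA^{-1}\in\OO{\RR}$ for all $h\in\hol$'': the forward direction needs that a positive-definite symmetric matrix has a \emph{unique} positive-definite symmetric square root, so that $h^T(A^TA)h = A^TA$ forces $(AhA^{-1})^T(AhA^{-1}) = \Id$ after conjugating appropriately—more directly, $(Ah)^T(Ah) = A^TA \iff (AhA^{-1})^T(AhA^{-1}) = \Id$ by pre- and post-multiplying by $(A^{-1})^T$ and $A^{-1}$. Once this linear-algebra identity is in hand, everything else is a direct substitution and a bookkeeping check that the actions match up; I would present it in a couple of lines. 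I would also remark that this identification is compatible with Theorem~\ref{thm:Bettiol-Derdzinski-Piccione} and with the conventions stated after it, so that the $\tNG$-action here is the one referred to throughout Section~\ref{sec:commens}.
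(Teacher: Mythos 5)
Your proposal is correct and takes essentially the same route as the paper: both reduce the first claim to the linear-algebra equivalence $(Ah)^T(Ah) = A^TA \iff AhA^{-1} \in \OO{\RR}$ (the paper phrases it via equality of cosets $\OO{\RR}AB = \OO{\RR}A$, you via pre- and post-multiplication by $A^{-1}$, which are the same computation), and both derive the ``moreover'' part from the identity $\langle B{-},B{-}\rangle_A = \langle{-},{-}\rangle_{AB}$. The brief appeal to uniqueness of positive-definite square roots is unnecessary, as you yourself note when you give the ``more direct'' one-line argument.
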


\begin{proof}
Given $A \in \GL[n]{\RR}$, we have
\begin{align*}
\OO{\RR}A \in \OO{\RR} \backslash \mathcal{C}_\Gamma \quad &\Longleftrightarrow \quad A B A^{-1} \in \OO{\RR} \text{ for all } B \in \hol \\
&\Longleftrightarrow \quad \OO{\RR} A B = \OO{\RR} A \text{ for all } B \in \hol \\
&\Longleftrightarrow \quad \langle {-},{-} \rangle_{AB} = \langle {-},{-} \rangle_A \text{ for all } B \in \hol \\
&\Longleftrightarrow \quad \langle B{-},B{-} \rangle_A = \langle {-},{-} \rangle_A \text{ for all } B \in \hol \\
&\Longleftrightarrow \quad \langle {-},{-} \rangle_A \text{ is $\hol$-invariant},
\end{align*}
as required.

Since the group $\tNG$ acts on the Teichm\"uller space by right multiplication and since we have $\langle B{-},B{-} \rangle_A = \langle {-},{-} \rangle_{AB}$ for all $A$ and $B$, the ``moreover'' part follows immediately.
\end{proof}

The following result is well-known, but we include its proof here for completeness.

\begin{lem} \label{lem:dq-orbifold}
Let $G$ be a topological group and let $H,K \leq G$ be closed subgroups such that $H$ is discrete and $K$ is compact. Then the right $H$-action on the right coset space $K \backslash G$ is properly discontinuous.
\end{lem}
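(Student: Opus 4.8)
The plan is to verify proper discontinuity in the local form: for every $x_0 \in K \backslash G$ there is an open neighbourhood $V \ni x_0$ with $\{\, h \in H \mid Vh \cap V \neq \emptyset \,\}$ finite. (When $K \backslash G$ is locally compact Hausdorff, as in all the applications of the lemma in this paper, this is equivalent to finiteness of $\{\, h \in H \mid Ch \cap C \neq \emptyset \,\}$ for every compact $C \subseteq K \backslash G$.) Write $q \colon G \to K \backslash G$, $g \mapsto Kg$, for the quotient map, which is open since $q^{-1}(q(U)) = KU$ is open for every open $U \subseteq G$. Fix $x_0 = Kg_0$ and put $C_0 \coloneqq g_0^{-1} K g_0$, which is compact because $K$ is. The neighbourhood I would use is $V \coloneqq q(g_0 W)$ for a symmetric open neighbourhood $W$ of $e \in G$ to be chosen below.

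First I would record the coset computation at the heart of the argument: for $h \in H$ and $w_1, w_2 \in W$, the identity $K g_0 w_1 h = K g_0 w_2$ holds if and only if $w_1 h w_2^{-1} \in g_0^{-1} K g_0 = C_0$, using that $K$ is a subgroup. Hence $Vh \cap V \neq \emptyset$ forces $h \in w_1^{-1} C_0 w_2 \subseteq W^{-1} C_0 W = W C_0 W$ for some $w_1, w_2 \in W$, so it suffices to arrange that $H \cap W C_0 W$ is finite. Next I would observe that $F \coloneqq H \cap C_0$ is already finite: it is discrete, being a subset of the discrete group $H$, and it is closed in the compact set $C_0$ because $H$ is closed in $G$; thus $F$ is compact and discrete, hence finite. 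Consequently $H \setminus F$ is closed in $G$ (it is closed in $H$, since $F$ is open in the discrete group $H$, and $H$ is closed in $G$), and $H \setminus F$ is disjoint from $C_0$.

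The remaining, and only non-formal, step is a separation argument inside the topological group $G$: produce a symmetric open neighbourhood $W$ of $e$ with $W C_0 W \cap (H \setminus F) = \emptyset$. Granting this, $\{\, h \in H \mid Vh \cap V \neq \emptyset \,\} \subseteq H \cap W C_0 W \subseteq F$ is finite, which finishes the proof. To obtain $W$, for each $c \in C_0$ I would use continuity of $(a,b) \mapsto a c b$ at $(e,e)$, together with the fact that $c$ lies in the open set $G \setminus (H \setminus F)$, to find a symmetric open $W_c \ni e$ with $W_c c W_c \cap (H \setminus F) = \emptyset$; then choose symmetric $W_c'$ with $W_c' W_c' \subseteq W_c$, cover the compact set $C_0$ by finitely many sets $W_{c_1}' c_1, \dots, W_{c_m}' c_m$, and set $W \coloneqq \bigcap_{i=1}^m W_{c_i}'$. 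A routine compactness bookkeeping then gives $W C_0 W \subseteq \bigcup_{i=1}^m W_{c_i} c_i W_{c_i}$, which is disjoint from $H \setminus F$. I do not expect a genuine obstacle here; the only care needed is in tracking which neighbourhood is shrunk at each stage so that the triple product $W C_0 W$ still lands inside the chosen $W_{c_i} c_i W_{c_i}$. Conceptually this lemma is the statement that, for $G$ locally compact Hausdorff, the right $G$-action on $K \backslash G$ (which is proper precisely because $K$ is compact) restricts to a proper action of the closed subgroup $H$, and proper plus discrete means properly discontinuous; but since here $G$ is merely a topological group, the hands-on argument above is the natural one.
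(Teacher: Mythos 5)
The local-to-global step is a genuine gap. You correctly prove the \emph{local} form of proper discontinuity: every $x_0 = Kg_0$ has a neighbourhood $V = q(g_0 W)$ with $\{h \in H : Vh \cap V \neq \varnothing\} \subseteq H \cap g_0^{-1}Kg_0$, which is finite because it is a closed, discrete subset of a compact set, and the separation argument producing $W$ is sound. But the parenthetical claim that, when $K\backslash G$ is locally compact Hausdorff, the local form is \emph{equivalent} to the compact form (finiteness of $\{h : Ch\cap C\neq\varnothing\}$ for every compact $C$) is false, and it is the compact form that the paper's proof establishes and that the applications require---in particular Proposition~\ref{prop:Mflat-orb} needs it to conclude that $K\backslash G/\tNG$ is a Hausdorff orbifold. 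A standard counterexample is the free $\ZZ$-action on $\RR^2\setminus\{0\}$ by $n\cdot(x,y)=(2^n x,\,2^{-n} y)$: every point has a neighbourhood meeting only its $n=0$ translate, yet the compact arc $\{(\cos\theta,\sin\theta):0\le\theta\le\pi/2\}$ meets \emph{all} of its translates, and the quotient is non-Hausdorff. Passing from the local form to the compact form needs, in addition, a two-point separation condition (for $x$, $y$ in distinct orbits, find $V\ni x$, $V'\ni y$ with $Vh\cap V'=\varnothing$ for all $h$), which your write-up does not supply.

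Your approach can be completed along the same lines: running the coset computation for a pair $Kg_0$, $Kg_1$ gives $\{h : q(g_0W_0)h\cap q(g_1W_1)\neq\varnothing\}\subseteq H\cap W_0^{-1}(g_0^{-1}Kg_1)W_1$, and $H\cap g_0^{-1}Kg_1$ is again finite by the closed--discrete--compact argument; covering a compact $C$ by finitely many such charts and shrinking the $W$'s to accommodate all pairs then yields the compact form. But this is additional work that the equivalence claim was quietly replacing. The paper sidesteps the whole issue with a shorter route: since $K$ is compact, the quotient map $q\colon G\to K\backslash G$ is perfect, so the preimage $V=q^{-1}(U)$ of a compact $U$ is compact, and $\{h\in H:Uh\cap U\neq\varnothing\}\subseteq H\cap V^{-1}V$ is the intersection of a closed discrete set with a compact set, hence finite. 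That argument also requires no local compactness or Hausdorffness hypotheses on $G$.
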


\begin{proof}
Let $U \subseteq K \backslash G$ be a compact subset. If $h \in H$ is such that $Uh \cap U \neq \varnothing$, then there exist $Ku,Ku' \in U$ such that $Kuh = Ku'$ and therefore $h \in u^{-1}Ku' \subseteq V^{-1}V$, where $V \subseteq G$ is the preimage of $U$ under the quotient map $q\colon G \to K \backslash G$. Note that the map $q$ is perfect \cite[Theorem~1.5.7]{Arhangelskii-Tkachenko}, implying that $V$ (and hence also $V^{-1}V$) is compact: see \cite[\S26, Exercise~12]{Munkres}. As $H$ is discrete, this implies that there are only finitely many elements $h \in H$ such that $Uh \cap U \neq \varnothing$, as required.
\end{proof}

\begin{prop} \label{prop:EG}
The space $\Tflat$ is a model of the classifying space for proper actions of~$\tNG$.
\end{prop}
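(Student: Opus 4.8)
The plan is to verify the two conditions of Theorem~\ref{thm:model}: first, that every isotropy group of the $\tNG$-action on $\Tflat$ is finite (so that the action is indeed proper), and second, that for each finite subgroup $H \leq \tNG$ the fixed-point set $\Tflat^H$ is weakly contractible. The $\tNG$-CW-structure will follow from the action being properly discontinuous, which in turn we get from Lemma~\ref{lem:dq-orbifold}: we identify $\Tflat = \OO{\RR}\backslash\mathcal{C}_\Gamma$ and, using Lemma~\ref{lem:ctlr-trans}, realise $\tNG = \tau(\nmlr[\Gamma]{\Aff(n)})$ as a discrete subgroup of $\GLin_n(\RR)$ (it lies in the normalizer in $\AutL$ of $\hol$, hence is commensurable with $\GL{\ZZ}$ and thus discrete); since $\OO{\RR}$ is compact, Lemma~\ref{lem:dq-orbifold} gives proper discontinuity of the right $\tNG$-action on $\OO{\RR}\backslash\GLin_n(\RR)$, and $\mathcal{C}_\Gamma$ is a $\tNG$-invariant (indeed $\nmlr{\AutL}$-invariant) subset, so the action restricts. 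A properly discontinuous smooth action on a manifold admits an equivariant CW-structure, and properness gives finiteness of point stabilizers.

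Next I would compute the fixed-point sets. Here the description from Lemma~\ref{lem:Teich-IP} is the crucial tool: $\Tflat$ is the space of $\hol$-invariant inner products on $\RR^n$, and $B \in \tNG$ acts by $\langle{-},{-}\rangle \mapsto \langle B{-},B{-}\rangle$. Thus for a finite subgroup $H \leq \tNG$, the fixed set $\Tflat^H$ consists of those $\hol$-invariant inner products that are moreover $H$-invariant — equivalently, invariant under the finite subgroup $\langle \hol, H\rangle \leq \GLin_n(\RR)$ generated by $\hol$ and (a set-theoretic lift of) $H$. Wait, one must be slightly careful: elements of $\tNG$ are cosets $\hol B$ is not the right description — rather $\tNG \leq \GLin_n(\RR)$ is an honest subgroup containing $\hol$ as a normal subgroup, so a finite subgroup $H \leq \tNG$ together with $\hol$ generates a subgroup $\hol H \leq \tNG$ which is finite (as $\hol$ is finite and $H$ is finite and $\hol$ is normal). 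So $\Tflat^H$ is exactly the space of $\hol H$-invariant inner products on $\RR^n$.

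The key point is then that the space of invariant inner products for any finite (equivalently, any compact) subgroup of $\GLin_n(\RR)$ is non-empty and convex, hence contractible. Non-emptiness is the standard averaging argument (average any inner product over the finite group); convexity is immediate since a convex combination of invariant inner products is again invariant and positive-definite. A convex subset of the vector space $\mathrm{Sym}_n(\RR)$ of symmetric bilinear forms is contractible (straight-line homotopy to any chosen point), so in particular weakly contractible. This handles the case $H$ finite; the case $H = 1$ gives that $\Tflat$ itself is contractible, consistent with Theorem~\ref{thm:Bettiol-Derdzinski-Piccione}.

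Assembling these: all isotropy groups of $\Tflat$ are finite, and $\Tflat^H$ is weakly contractible for every finite $H \leq \tNG$ (and empty is not an issue since it is always non-empty). By Theorem~\ref{thm:model}, $\Tflat$ is a model for $E_{\mathcal{F}_{\text{fin}}}(\tNG)$, the classifying space for proper actions of $\tNG$. The main obstacle I anticipate is the bookkeeping around the $\tNG$-CW-structure: one needs that a manifold with a properly discontinuous (not necessarily free, not necessarily cocompact) smooth action of a discrete group admits an equivariant CW-complex structure. This is standard (it follows from the existence of an equivariant triangulation of a smooth proper $G$-manifold, or from Illman's theorem), but it should be cited carefully; the geometric content — the identification of fixed sets with convex sets of invariant inner products — is straightforward given Lemmas~\ref{lem:ctlr-trans}, \ref{lem:dq-orbifold} and~\ref{lem:Teich-IP}.
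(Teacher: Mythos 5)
Your proof is correct and takes essentially the same route as the paper's: verify Theorem~\ref{thm:model} by combining Lemma~\ref{lem:Teich-IP} (identification of $\Tflat$ with $\hol$-invariant inner products), Lemma~\ref{lem:dq-orbifold} plus discreteness of $\tNG \leq \Aut(\lat)$ for properness, and averaging plus convexity for contractibility of fixed sets. Your phrasing of $\Tflat^H$ as the inner products invariant under the finite group $\hol H$ is a mild simplification of the paper's version, which averages over $H$ and then checks $\hol$-invariance separately using that $H$ normalizes $\hol$; and your caveat about the $G$-CW structure is legitimate (the paper leaves it implicit, and it does require something like Illman's equivariant triangulation theorem).
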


\begin{proof}
By Theorem~\ref{thm:model}, it is enough to show that the action of $\tNG$ on $\Tflat$ is proper and that for any finite subgroup $F < \tNG$, the fixed point set of $F$ is contractible. We will use the description of $\Tflat$ as the space of $\hol$-invariant inner products on $\RR^n$, given by Lemma~\ref{lem:Teich-IP}.

Note first that $\tNG$ is discrete in $\GL{\RR}$, as it is contained in a discrete subgroup $\Aut(\lat) \cong \GL{\ZZ}$. Moreover, $\Tflat$ is a closed subspace of $\OO{\RR} \backslash \GL{\RR}$, and the action of $\tNG$ on $\Tflat$ extends to an action on $\OO{\RR} \backslash \GL{\RR}$. Since $\OO{\RR}$ is compact, it follows by Lemma~\ref{lem:dq-orbifold} that the $\tNG$-action on $\OO{\RR} \backslash \GL{\RR}$ is proper, hence the action on the subspace $\Tflat$ is proper as well.

Now let $F \leq \tNG$ be a finite subgroup; we aim to show that the fixed point set $\Fix(F)$ of $F$ is contractible. We first claim that $\Fix(F)$ is non-empty. Indeed, pick an arbitrary $\langle{-},{-}\rangle \in \Tflat$, and define $\langle{-},{-}\rangle'$ by $\langle x,y \rangle' = \sum_{A \in F} \langle Ax,Ay \rangle$. Then $\langle{-},{-}\rangle'$ is clearly an inner product on $\RR^n$ which is $F$-invariant. Moreover, $\langle{-},{-}\rangle'$ is $\hol$-invariant: indeed, given $B \in \hol$ and $x,y \in \RR^n$ we have
\[
\langle Bx,By \rangle' = \sum_{A \in F} \langle ABx,ABy \rangle = \sum_{A \in F} \langle (ABA^{-1})Ax,(ABA^{-1})Ay \rangle = \sum_{A \in F} \langle Ax,Ay \rangle = \langle x,y \rangle'
\]
since $\langle{-},{-}\rangle$ is $\hol$-invariant and $ABA^{-1} \in \hol$ for all $A \in F \leq \tNG$. Thus $\langle{-},{-}\rangle' \in \Fix(F)$ and so $\Fix(F)$ is non-empty, as claimed.

Finally, it remains to show that $\Fix(F)$ is contractible. Pick any $\langle{-},{-}\rangle_0 \in \Fix(F)$, and given $\langle{-},{-}\rangle \in \Fix(F)$ and $t \in [0,1]$ define $H(\langle{-},{-}\rangle,t) = (1-t)\langle{-},{-}\rangle + t\langle{-},{-}\rangle_0$. It is clear from the definition that $H(\langle{-},{-}\rangle,t)$ is an inner product that is both $\hol$-invariant and $F$-invariant, and therefore $H(\langle{-},{-}\rangle,t) \in \Fix(F)$, for any $\langle{-},{-}\rangle \in \Fix(F)$ and $t \in [0,1]$. Thus $H$ is a deformation retraction of $\Fix(F)$ to the point $\langle{-},{-}\rangle_0 \in \Fix(F)$, showing that $\Fix(F)$ is contractible, as required.
\end{proof}

\begin{proof}[Proof of Theorem~\ref{thm:classifying}]
    Part~\ref{it:class-cocomens} follows from the construction and Lemma~\ref{lem:MCG-quotients}. Furthermore, it follows from part~\ref{it:class-cocomens} that any finite subgroup of $\MCG(\mathcal{O})$ has the same fixed point set in $\Tflat$ as a finite subgroup of $\tNG$, and vice versa, implying by Theorem~\ref{thm:model} that $\MCG(\mathcal{O})$ and $\tNG$ have the same classifying space for proper actions. Part~\ref{it:class-EG} then follows from Proposition~\ref{prop:EG}.
\end{proof}

\section{Commensurability description} \label{sec:commens}

In this section we study the group $\tNG = \tau(\nmlr[\Gamma]{\Aff(n)})$ up to commensurability.  In Section~\ref{ssec:commens-orb} we show that $\Mflat$ is a very good orbifold, motivating the description of such an orbifold (and its orbifold fundamental group) up to commensurability.  In Section~\ref{ssec:commens-sgps}, we show that $\tNG$ is commensurable with a product of centralizers that is easier to treat in further arguments.  We prove Corollary~\ref{cor:commensurable-cryst} in Section~\ref{ssec:commens-cryst}; we prove Theorem~\ref{thm:commensurable} and deduce Corollary~\ref{cor:commensurable-orb} from it in Sections~\ref{ssec:commens-ncryst-fields}--\ref{ssec:commens-ncryst-lat}.

\subsection{Moduli spaces as orbifolds} \label{ssec:commens-orb}

We begin by describing the structure of $\Mflat$ as an orbifold. This will motivate the study of its orbifold fundamental group up to commensurability later, as groups commensurable to (i.e.\ sharing a finite-index subgroup with) the orbifold fundamental group of $\Mflat$ correspond to orbifolds commensurable to $\Mflat$.

\begin{prop} \label{prop:Mflat-orb}
The moduli space $\Mflat$ is a very good orbifold.
\end{prop}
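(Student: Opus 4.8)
The plan is to realise $\Mflat$ as the quotient of a manifold by a smooth, properly discontinuous action of a discrete group --- which makes it a good orbifold --- and then to upgrade ``good'' to ``very good'' by extracting a torsion-free finite-index subgroup of $\tNG$ via Minkowski's lemma.

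First I would fix the geometric picture. By Theorem~\ref{thm:Bettiol-Derdzinski-Piccione} (equivalently, by Lemma~\ref{lem:Teich-IP}, which identifies $\Tflat$ with the open set of positive-definite $\hol$-invariant symmetric bilinear forms inside a real vector space), the space $\Tflat$ is a smooth manifold. The group $\tNG = \tau(\nmlr[\Gamma]{\Aff(n)})$ acts on $\Tflat = \OO{\RR}\backslash\mathcal{C}_\Gamma$ by right multiplication: this is well-defined because every element of $\tNG$ normalizes $\hol$ (so right multiplication by $\tNG$ preserves $\mathcal{C}_\Gamma$, since $(XB)\hol(XB)^{-1} = X\hol X^{-1} \subseteq \OO{\RR}$ for $X \in \mathcal{C}_\Gamma$ and $B \in \tNG$), and it is smooth, being the restriction of the right-translation action on the homogeneous space $\OO{\RR}\backslash\GL{\RR}$ (equivalently, of the linear action $\langle{-},{-}\rangle\mapsto\langle B{-},B{-}\rangle$ on bilinear forms, cf.\ Lemma~\ref{lem:Teich-IP}). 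Next I would record, exactly as in the proof of Proposition~\ref{prop:EG}, that $\tNG$ is discrete: by Lemma~\ref{lem:ctlr-trans} it is contained in $\AutL\cong\GL{\ZZ}$, which is discrete in $\GL{\RR}$. Since $\OO{\RR}$ is compact, Lemma~\ref{lem:dq-orbifold} shows that the $\tNG$-action on $\OO{\RR}\backslash\GL{\RR}$ is properly discontinuous; restricting to the closed $\tNG$-invariant subspace $\Tflat$, the $\tNG$-action on $\Tflat$ is properly discontinuous as well. Hence $\Mflat = \Tflat/\tNG$ is a good orbifold.

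To see it is \emph{very} good, I would produce a finite-index torsion-free subgroup $\mathcal{N}'\leq\tNG$. Under the identification $\AutL\cong\GL{\ZZ}$, the reduction-modulo-$3$ homomorphism $\GL{\ZZ}\to\GL{\ZZ/3\ZZ}$ has torsion-free kernel by Minkowski's lemma, so $\mathcal{N}' \coloneqq \ker\bigl(\tNG\to\GL{\ZZ/3\ZZ}\bigr)$ is torsion-free and has finite index in $\tNG$ (the index divides $|\GL{\ZZ/3\ZZ}|$). Because the $\tNG$-action on $\Tflat$ is properly discontinuous, every point stabilizer is finite; a torsion-free group with finite stabilizers acts freely, so $\mathcal{N}'$ acts freely (and properly discontinuously) on the manifold $\Tflat$. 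Consequently $\widetilde{\mathcal{O}} \coloneqq \Tflat/\mathcal{N}'$ is a manifold, and the natural projection $\Tflat/\mathcal{N}' \to \Tflat/\tNG = \Mflat$ is a finite orbifold covering, of degree $[\tNG:\mathcal{N}']$. Therefore $\Mflat$ is a very good orbifold.

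The routine points --- smoothness and well-definedness of the action, closedness of $\Tflat$ in $\OO{\RR}\backslash\GL{\RR}$, finiteness of $[\tNG:\mathcal{N}']$ --- are immediate. The one place I would be slightly careful is the final sentence: verifying that $\Tflat/\mathcal{N}'\to\Mflat$ is an orbifold covering in the exact sense of Section~\ref{sec:prelims}. This reduces to the standard slice picture for a discrete group acting properly on a manifold: around any orbit one has a $\tNG$-invariant tube on which the local model of $\Mflat$ is $(\text{slice})/\mathrm{Stab}$ for the finite stabilizer, and passing to the intermediate quotient by $\mathcal{N}'$ simply replaces $\mathrm{Stab}$ by its intersection with a conjugate of $\mathcal{N}'$ on each sheet. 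I expect no genuine obstacle here; the substance of the argument is the combination of Lemma~\ref{lem:dq-orbifold}, the manifold structure of $\Tflat$ from Theorem~\ref{thm:Bettiol-Derdzinski-Piccione}, and Minkowski's lemma.
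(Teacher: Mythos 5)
Your proof is correct and follows the same overall architecture as the paper's: show the $\tNG$-action is properly discontinuous (via Lemma~\ref{lem:dq-orbifold} and discreteness of $\tNG \leq \Aut(\lat)$) to get a good orbifold, then pass to a torsion-free finite-index subgroup of $\tNG$ to get a very good one. Two small points of divergence are worth noting. First, where the paper invokes the identification $\OO{\RR}\backslash\mathcal{C}_\Gamma \cong K\backslash G$ with $G = \nmlr{\GL{\RR}}$ and $K = \OO{\RR}\cap G$ (from \cite[\S 4.4]{Bettiol-Derdzinski-Piccione}) so that Lemma~\ref{lem:dq-orbifold} applies verbatim to a homogeneous space of $G$, you instead apply the lemma to the ambient space $\OO{\RR}\backslash\GL{\RR}$ and restrict to the closed $\tNG$-invariant subspace $\Tflat$; both work, and yours is the route already used in the proof of Proposition~\ref{prop:EG}. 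Second, for virtual torsion-freeness the paper cites Selberg's Lemma, whereas you cite Minkowski's lemma (the kernel of reduction mod $3$ in $\GL{\ZZ}$ is torsion-free). Since $\tNG$ sits inside $\Aut(\lat) \cong \GL{\ZZ}$, Minkowski's lemma applies directly, is more elementary than Selberg, and gives an explicit index bound dividing $\lvert\GL{\ZZ/3\ZZ}\rvert$; this is a genuine, if minor, simplification.
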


\begin{proof}
By definition, the moduli space can be described as a double quotient, $\Mflat = \OO{\RR} \backslash \mathcal{C}_\Gamma / \tNG$, where $\mathcal{C}_\Gamma = \{ X \in \GL{\RR} \mid X \hol X^{-1} \subset \OO{\RR} \}$. Moreover, given any element $X \in \mathcal{C}_\Gamma$, the map $G \to \mathcal{C}_\Gamma$ given by $A \mapsto XA$ induces an isomorphism $K \backslash G \cong \OO{\RR} \backslash \mathcal{C}_\Gamma$, where $G = \nmlr{\GL{\RR}}$ and $K = \OO{\RR} \cap \nmlr{\GL{\RR}}$ \cite[\S 4.4]{Bettiol-Derdzinski-Piccione}.  It follows that $\Mflat$ can be described as a double coset space $K \backslash G / \tNG$. Note that we have $\tNG \leq \nmlr{\Aut(\lat)} = \Aut(\lat) \cap G$ by Lemma~\ref{lem:ctlr-trans}, and $\Aut(\lat) \cap G$ discrete in $G$ since $\Aut(\lat)$ is discrete in $\GL{\RR}$; thus $\tNG$ is discrete. The subgroup $K \leq G$ is a closed subgroup of the compact group $\OO{\RR}$, and is therefore compact itself. Therefore, by Lemma~\ref{lem:dq-orbifold}, the right $\tNG$-action on $K \backslash G$ is properly discontinuous. Since $K \backslash G$ is a manifold, this makes $\Mflat$ into a good orbifold.

Finally, we claim that $\tNG$ is virtually torsion-free, i.e.\ has a torsion-free subgroup $H$ of finite index: it will then follow that the double quotient $K \backslash G / H$ is a manifold (since the $H$-action on $K \backslash G$ is properly discontinuous and free) that is a finite orbifold cover of $\Mflat$, implying that $\Mflat$ is a very good orbifold, as required. To show the existence of such a subgroup $H$, note that $\tNG$ is a subgroup of the finitely generated linear group $\Aut(\lat) \cong \GL{\ZZ}$, and so it is virtually torsion-free by Selberg's Lemma \cite{Alperin}.
\end{proof}

\subsection{Commensurated subgroups} \label{ssec:commens-sgps}

We first relate the group $\tNG = \tau(\nmlr[\Gamma]{\Aff(n)})$ and the group $\nmlr{\Aut(\lat)}$, as follows:

\begin{prop} \label{prop:normaliser-fi}
The group $\tNG$ is a finite-index subgroup of $\nmlr{\Aut(\lat)}$.
\end{prop}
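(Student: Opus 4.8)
The plan is to exhibit $\tNG = \tau(\nmlr[\Gamma]{\Aff(n)})$ as the image of $\nmlr[\Gamma]{\Aff(n)}$ under $\tau$, compare it with $\nmlr{\Aut(\lat)}$, and show the index is finite by a cohomological obstruction argument. By Lemma \ref{lem:ctlr-trans} we already know $\tNG \leq \nmlr{\Aut(\lat)}$, so only finiteness of the index is at stake. First I would fix $B \in \nmlr{\Aut(\lat)} \leq \GL{\RR}$ and ask when $B$ lifts to an affine transformation $(B,v)$ normalizing $\Gamma$. Conjugating $\Gamma$ by $(B,v)$ is a group isomorphism onto a new crystallographic group $\Gamma^{(B,v)}$ with the same translation lattice $\lat$ (since $B \in \Aut(\lat)$) and the same holonomy $\hol$ (since $B$ normalizes $\hol$; note $\hol = \tau(\Gamma)$ and $\nmlr{\Aut(\lat)} \leq \nmlr[\hol]{\GL{\RR}}$ because $B\Aut(\lat)B^{-1} = \Aut(\lat)$ forces $B\hol B^{-1}$ to be a finite subgroup conjugate, in fact equal, to $\hol$ — this needs the standard fact that $\hol$ is the automorphism group of $\lat$ coming from $\Gamma$, or more simply one checks $B\hol B^{-1} \subseteq \Aut(\lat)$ is finite and reassembles). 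So $(B,v)$ normalizes $\Gamma$ for a suitable choice of $v \in \RR^n$ precisely when the extension class of $\Gamma$ in $H^2(\hol;\lat)$ is preserved under the automorphism of $\hol$ induced by $B$-conjugation, combined with the automorphism of $\lat$ given by $B$.

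The key step is therefore: the group $\nmlr{\Aut(\lat)}$ acts on the finite set $H^2(\hol;\lat)$ — here I would use that $\hol$ is finite and $\lat \cong \ZZ^n$ is a finitely generated $\ZZ$-module, so $H^2(\hol;\lat)$ is a finite abelian group annihilated by $|\hol|$ — via the combined action on the pair $(\hol$-module structure$)$. More precisely, an element $B \in \nmlr{\Aut(\lat)}$ gives an automorphism of the pair $(\hol, \lat)$, hence acts on the set of equivalence classes of extensions $1 \to \lat \to E \to \hol \to 1$ realizing the given action, which is a torsor-like set parametrized by $H^2(\hol;\lat)$. The subgroup of $B$ fixing the class $[\Gamma]$ is exactly $\tNG$ (after accounting for the choice of $v$, which is the point of the $H^2$ rather than $H^1$ bookkeeping: different lifts $(B,v)$, $(B,v')$ differ by a translation $(\Id, v-v')$, and such a translation normalizes $\Gamma$ iff $v - v'$ lies in a certain subgroup, which is the source of the coboundary ambiguity). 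Since $H^2(\hol;\lat)$ is finite and $\tNG$ is the stabilizer of a point under the $\nmlr{\Aut(\lat)}$-action on it, the orbit is finite and hence $[\nmlr{\Aut(\lat)} : \tNG] < \infty$.

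I expect the main obstacle to be making the $H^2$-bookkeeping precise: one must check that $\nmlr{\Aut(\lat)}$ genuinely normalizes $\hol$ (not merely conjugates it into $\Aut(\lat)$), identify the correct action on $H^2(\hol;\lat)$ so that the stabilizer of $[\Gamma]$ is $\tau$ of the \emph{affine} normalizer rather than of an abstract automorphism group, and verify that the translational freedom in choosing a lift $(B,v)$ is exactly absorbed by passing from cocycles to cohomology classes. An alternative, possibly cleaner route avoiding cohomology: show directly that $\nmlr{\Aut(\lat)}$ preserves, up to finite ambiguity, the finite set of crystallographic groups $\Gamma'$ with translation lattice $\lat$ and holonomy $\hol$ that are abstractly isomorphic to $\Gamma$ — this set is finite by Bieberbach's second theorem (Theorem \ref{thm:bieb}(2)), and by Bieberbach's third theorem (Theorem \ref{thm:bieb}\ref{it:bieb-iso}) any such $\Gamma'$ is conjugate to $\Gamma$ by an element of $\Aff(n)$; chasing the linear parts of these conjugating elements and using that $\tNG$ is the stabilizer of $\Gamma$ itself yields the finite index. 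I would write up whichever of these is shortest, probably the second, since it reuses the already-cited Bieberbach theorems directly.
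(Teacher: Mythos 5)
Your first approach is essentially the paper's proof, up to a shift of cohomological degree: the paper works with $H^1(\hol,\RR^n/\lat)$, which is canonically isomorphic to your $H^2(\hol;\lat)$ via the long exact sequence for $0\to\lat\to\RR^n\to\RR^n/\lat\to 0$ (the $\QQ$-vector space $\RR^n$ is cohomologically trivial for the finite group $\hol$). The paper's choice of $H^1$ is the more convenient one because the relevant $1$-cocycles $s_{\Gamma'}\colon\hol\to\RR^n/\lat$ are literally the vector parts of the affine elements of a crystallographic group modulo the lattice, so the identification of the stabilizer of $[s_\Gamma]$ with (a subgroup of) $\tNG$ is almost immediate and the translational ambiguity in the lift $(B,v)$ is absorbed exactly by the coboundaries $\operatorname{PDer}(\hol,\RR^n/\lat)$; your $H^2$ bookkeeping would also work but requires a bit more care in tracking the extension-class action. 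One small point: in the paper's notation $\nmlr{\Aut(\lat)}$ means the normalizer of $\hol$ \emph{inside} $\Aut(\lat)$, not the normalizer of $\Aut(\lat)$ in $\GL{\RR}$, so the issue you raise about whether $B$ genuinely normalizes $\hol$ is moot --- it holds by definition.

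Your proposed ``alternative, cleaner route'' via Bieberbach's second theorem, however, has a real gap: the set of crystallographic subgroups $\Gamma'\leq\Aff(n)$ with translation lattice $\lat$ and holonomy $\hol$ is \emph{not} finite --- it is parametrized by $\operatorname{Der}(\hol,\RR^n/\lat)$, a continuum. What is finite is the set of $\ker(\tau)$-conjugacy classes of such groups, which is exactly $H^1(\hol,\RR^n/\lat)$, and that finiteness is the content of the cohomological step. Bieberbach's second theorem (finitely many isomorphism classes of $n$-dimensional crystallographic groups) does not give this for free; in fact the usual proof of Bieberbach II uses the finiteness of this cohomology group as an input rather than a consequence. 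So the second route does not avoid the cohomological argument, and as written would be circular or incomplete. Stick with the first route, written in degree one.
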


\begin{proof}
By Lemma~\ref{lem:ctlr-trans}, $\tNG$ is a subgroup of $\nmlr{\Aut(\lat)}$.

Now let $\mathcal{G}$ be the set of $n$-dimensional crystallographic groups whose translation lattice is $\lat$ and whose holonomy (viewed as a subgroup of $\Aut(\lat)$) is $\hol$. Then any group $\Gamma' \in \mathcal{G}$ can be uniquely described as
\[
\Gamma' = \{ (h,t) \in \Aff(n) \mid h \in \hol, t \in s_{\Gamma'}(h) \}
\]
for some function $s_{\Gamma'} \colon \hol \to \RR^n/\lat$. It is easy to verify (using the fact that $\Gamma'$ is a group) that $s_{\Gamma'}(gh) = g \cdot s_{\Gamma'}(h) + s_{\Gamma'}(g)$ for all $g,h \in \hol$, and that any function $s_{\Gamma'}\colon \hol \to \RR^n/\lat$ satisfying this condition defines a group $\Gamma' \in \mathcal{G}$. Moreover, if $\Gamma'' = (\Id,v)\Gamma'(\Id,v)^{-1}$ then we have $s_{\Gamma'}(h)-s_{\Gamma''}(h) = h \cdot v - v$ for all $h \in \hol$. There thus exists a bijection between the $\ker(\tau)$-conjugacy classes of groups in $\mathcal{G}$ and the first cohomology group $H^1(\hol,\RR^n/\lat) = \operatorname{Der}(\hol,\RR^n/\lat) / \operatorname{PDer}(\hol,\RR^n/\lat)$, where we define the \emph{derivations}
\[
\operatorname{Der}(\hol,\RR^n/\lat) = \{ s\colon \hol \to \RR^n/\lat \mid s(gh) = g \cdot s(h) + s(g) \text{ for all } g,h \in \hol \}
\]
and the \emph{principal derivations}
\[
\operatorname{PDer}(\hol,\RR^n/\lat) = \{ s_v'\colon \hol \to \RR^n/\lat, h \mapsto h \cdot v - v \mid v \in \RR^n \}.
\]
See \cite[\S2.2]{Szczepanski} for further details.

We claim that the abelian group $H^1 := H^1(\hol,\RR^n/\lat)$ is finite. Indeed, by \cite[Chapter~IV, Proposition~5.3]{MacLane}, every element $S \in H^1$ satisfies $kS = 0$, where $k = |\hol|$. If $S = [s]$ for some $s \in \operatorname{Der}(\hol,\RR^n/\lat)$ it then follows that $ks = s_v'$ for some $v \in \RR^n$ and therefore, by replacing $s$ with $s-s_{v/k}'$, we may assume that $ks = 0$. It follows that every element of $H^1$ is represented by some function $s\colon \hol \to \frac{1}{k}\lat/\lat$; since $\hol$ and $\frac{1}{k}\lat/\lat$ are both finite, it follows that $H^1$ is finite as well, as claimed.

Now note that $\nmlr{\Aut(\lat)}$ acts on $\mathcal{G}$ by setting $g \cdot \Gamma' = (g,0) \Gamma' (g,0)^{-1}$ for $g \in \nmlr{\Aut(\lat)}$ and $\Gamma' \in \mathcal{G}$. Moreover, if $\Gamma'' = (\Id,v) \Gamma' (\Id,v)^{-1}$ for some $v \in \RR^n$ it follows that $g \cdot \Gamma'' = (\Id,g \cdot v)(g \cdot \Gamma')(\Id,g \cdot v)^{-1}$, implying that $\nmlr{\Aut(\lat)}$ acts on the set of $\ker(\tau)$-conjugacy classes of subgroups in $\mathcal{G}$, and therefore it acts on $H^1(\hol,\RR^n/\lat)$ by the correspondence above.

Let $G_0$ be the stabiliser of $[s_\Gamma]$ under this action. Since $H^1(\hol,\RR^n/\lat)$ is finite, it follows that $G_0$ has finite index in $\nmlr{\Aut(\lat)}$. However, for any $g \in G_0$, we have $s_\Gamma - s_{g \cdot \Gamma} = s_v'$ and hence $(g,0)\Gamma(g,0)^{-1} = g \cdot \Gamma = (\Id,v)\Gamma(\Id,v)^{-1}$ for some $v \in \RR^n$, implying that $(g,-v) \in \Aff(n)$ normalizes $\Gamma$ and therefore $g = \tau(g,-v) \in \tau(\nmlr[\Gamma]{\Aff(n)})$. Thus $G_0 \subseteq \tNG$, and hence since $G_0$ has finite index in $\nmlr{\Aut(\lat)}$ so does $\tNG$, as required.
\end{proof}

We will be making repeated use of the following result.

\begin{lem} \label{lem:Aut-lat-fi}
Let $W$ be a finite-dimensional $\QQ$-vector space and $\Lambda,\Lambda' \subseteq W$ two lattices. Then the subgroups $\Aut(\Lambda)$ and $\Aut(\Lambda')$ of $\Aut(W)$ share a common finite-index subgroup.
\end{lem}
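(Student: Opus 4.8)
The statement is a standard fact: any two lattices in a finite-dimensional $\QQ$-vector space are commensurable, and the associated automorphism groups are commensurable as well. The plan is to reduce everything to the case of a single lattice and its finite-index sublattices, where the claim becomes essentially a congruence-subgroup argument.

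First I would observe that it suffices to prove the statement when $\Lambda' \subseteq \Lambda$. Indeed, given arbitrary lattices $\Lambda, \Lambda'$, their sum $\Lambda + \Lambda'$ is again a lattice (finitely generated, since both summands are, and spanning $W$), and it contains each of $\Lambda$ and $\Lambda'$ as a finite-index subgroup; so if the inclusion case is known, then $\Aut(\Lambda)$ and $\Aut(\Lambda')$ each share a finite-index subgroup with $\Aut(\Lambda+\Lambda')$, hence with each other. (Here ``finite-index subgroup'' is transitive and the intersection of two finite-index subgroups is finite-index, so chaining is harmless.) So from now on assume $\Lambda' \leq \Lambda$ with $[\Lambda:\Lambda'] = N < \infty$.

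Next, the key step: I claim $N\Lambda \subseteq \Lambda' \subseteq \Lambda$. This holds because $\Lambda/\Lambda'$ is a finite abelian group of order $N$, so every element is killed by $N$; thus $N\Lambda \subseteq \Lambda'$. Now I show that the ``principal congruence subgroup''
\[
K \coloneqq \{\, g \in \Aut(\Lambda) \mid g \text{ acts trivially on } \Lambda/N\Lambda \,\}
\]
is a finite-index subgroup of $\Aut(\Lambda)$ that is also contained in $\Aut(\Lambda')$. Finite index follows because $\Lambda/N\Lambda$ is a finite group (isomorphic to $(\ZZ/N\ZZ)^{\dim_\QQ W}$ after fixing a basis of $\Lambda$ over $\ZZ$), so $\Aut(\Lambda) \to \Aut(\Lambda/N\Lambda)$ has finite image and hence $K$, its kernel, has finite index. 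For containment in $\Aut(\Lambda')$: if $g \in K$ then for any $v \in \Lambda'$ we have $g(v) - v \in N\Lambda \subseteq \Lambda' $; since also $v \in \Lambda'$, we get $g(v) \in \Lambda'$, so $g(\Lambda') \subseteq \Lambda'$, and applying the same to $g^{-1} \in K$ gives $g(\Lambda') = \Lambda'$, i.e.\ $g \in \Aut(\Lambda')$. Symmetrically, the subgroup $K' \coloneqq \{\, g \in \Aut(\Lambda') \mid g \text{ acts trivially on } \Lambda'/N\Lambda' \,\}$ has finite index in $\Aut(\Lambda')$ and, using $N\Lambda' \subseteq N\Lambda \cdot (\text{something})$—more directly, since $[\Lambda':\Lambda'']$-type arguments show $N^2\Lambda \subseteq \Lambda'$ when combined with $N\Lambda' \subseteq N\Lambda$—one can arrange a common finite-index subgroup. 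Concretely, it is cleanest to just take $K$ as above together with the analogous subgroup at level $N^2$: the subgroup $\{g \in \Aut(\Lambda) \mid g \equiv \Id \bmod N^2\Lambda\}$ stabilizes $\Lambda'$ (as $N^2\Lambda \subseteq N\Lambda' \subseteq \Lambda'$) and stabilizes $N\Lambda \supseteq$ ... — in any case a single principal congruence subgroup of $\Aut(\Lambda)$ of sufficiently high level lies inside $\Aut(\Lambda')$, and symmetrically, so $\Aut(\Lambda) \cap \Aut(\Lambda')$ contains a finite-index subgroup of each.

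I expect no serious obstacle here; the only mild care needed is bookkeeping with the exact congruence level that guarantees simultaneous stabilization of both lattices, which is why passing through $N\Lambda$ (the largest sublattice of the form $k\Lambda$ contained in $\Lambda'$, essentially) streamlines the argument. Everything else is routine: $\Aut(\Lambda/N\Lambda)$ is finite, kernels of homomorphisms to finite groups have finite index, and finite-index subgroups are closed under intersection and transitive chaining.
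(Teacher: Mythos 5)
Your argument is correct and takes a genuinely different route from the paper. The paper reduces to $\Lambda' \subseteq \Lambda$ via the intersection $\Lambda \cap \Lambda'$ (you use the sum $\Lambda + \Lambda'$; both work), sets $d = [\Lambda:\Lambda']$, and then argues by orbit--stabilizer: $\Aut(\Lambda)$ permutes the finitely many index-$d$ subgroups of $\Lambda$, and $\Aut(\Lambda)\cap\Aut(\Lambda')$ is the stabilizer of $\Lambda'$, hence finite-index; the other direction uses $d\Lambda \subseteq \Lambda'$ with $[\Lambda':d\Lambda] = d^{t-1}$ and the same stabilizer trick. You instead use principal congruence subgroups: the kernel $K$ of $\Aut(\Lambda) \to \Aut(\Lambda/N\Lambda)$ has finite index and lands inside $\Aut(\Lambda')$, since $N\Lambda \subseteq \Lambda'$. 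Both approaches turn on the same inclusion $d\Lambda \subseteq \Lambda' \subseteq \Lambda$; your congruence-subgroup route has the small advantage of producing a \emph{normal} finite-index subgroup. One piece of cleanup: the digression about level $N^2$ is unnecessary and somewhat muddled (raising the level does not help with the direction you were worried about). The symmetric step works cleanly at level $N$ already: if $K' = \{g \in \Aut(\Lambda') \mid g \equiv \Id \bmod N\Lambda'\}$ and $v \in \Lambda$, then $Nv \in N\Lambda \subseteq \Lambda'$, so $g(Nv) - Nv \in N\Lambda'$, and dividing by $N$ (in the $\QQ$-vector space $W$) gives $g(v) - v \in \Lambda' \subseteq \Lambda$, whence $g(\Lambda) = \Lambda$ and $K' \leq \Aut(\Lambda)$. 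Then $K \leq \Aut(\Lambda) \cap \Aut(\Lambda')$ and $K' \leq \Aut(\Lambda) \cap \Aut(\Lambda')$ show the intersection has finite index in each factor, which is exactly what is needed.
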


\begin{proof}
Since $\Lambda \otimes \QQ = W$, it follows that for any element $w \in W$ there exists $k_w \in \ZZ_{\geq 1}$ such that $k_w w \in \Lambda$. Therefore, since $t \coloneqq \dim(W) < \infty$ (and so $\Lambda' \cong \ZZ^t$), there exists $k \in \ZZ_{\geq 1}$ such that $k \Lambda' \subseteq \Lambda$. Therefore, $\Lambda'' \coloneqq \Lambda \cap \Lambda'$ has index $\leq k^t < \infty$ in $\Lambda'$, and therefore is a lattice in $W$. By symmetry, it is then enough to show the Lemma with $\Lambda'$ replaced by $\Lambda''$; in particular, it is enough to show the result in the case when $\Lambda' \subseteq \Lambda$.

Let $d = [\Lambda : \Lambda']$, and let $H = \Aut(\Lambda) \cap \Aut(\Lambda')$. Note that $\Aut(\Lambda)$ acts on the set of index-$d$ subgroups of $\Lambda$, and $H$ is precisely the stabiliser of $\Lambda'$ under this action; but since $\Lambda$ is finitely generated, it has finitely many subgroups of index $d$, implying that $H$ has finite index in $\Aut(\Lambda)$. On the other hand, note that $d\Lambda \subseteq \Lambda'$ and $[\Lambda':d\Lambda] = \frac{[\Lambda':d\Lambda']}{[d\Lambda:d\Lambda']} = d^{t-1} < \infty$. Since $\Aut(\Lambda')$ acts on the (finite) set of index-$d^{t-1}$ subgroups of $\Lambda'$ and $H$ is the stabiliser of $d\Lambda$ under this action, it follows that $H$ has finite index in $\Aut(\Lambda')$ as well, as required.
\end{proof}

We can split the representation of $\hol$ on $\lat \otimes \QQ \cong \QQ^n$ into irreducibles:
\[
    \lat \otimes \QQ = W_1 \oplus \cdots \oplus W_\ell, \quad\text{where } W_i = V_{i,1} \oplus \cdots \oplus V_{i,m_i}
\]
and each $\hol$-representation $V_{i,j}$ is irreducible, with $V_{i,j} \cong V_{i',j'}$ if and only if $i=i'$. Note that any two subgroups of $\GL[k]{\QQ}$ that are conjugate in $\GL[k]{\RR}$ are also conjugate in $\GL[k]{\QQ}$, as finding a conjugating matrix is equivalent to solving a system of linear equations with rational coefficients (together with one inequality for the determinant). Consequently, since the representations $V_{i,j}$ and $V_{i',j'}$ are non-isomorphic over $\QQ$ for $i \neq i'$, the representations $V_{i,j} \otimes \RR$ and $V_{i',j'} \otimes \RR$ are non-isomorphic over $\RR$.

The key point of this construction is that $W_i \otimes \RR$ is a crystallographic $\hol$-subspace that is preserved by any matrix centralising $\hol$. As a consequence, we may consider each of the subspaces $W_i$ separately, as follows.

\begin{lem} \label{lem:centr-split}
The group $\prod_{i=1}^\ell \ctlr[\hol|_{W_i}]{\Aut(\lat \cap W_i)}$ contains $\ctlr{\Aut(\lat)}$ as a finite index subgroup.
\end{lem}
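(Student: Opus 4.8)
The plan is to exploit the block structure of $\hol$-centralizers already highlighted above. Since $W_i$ and $W_j$ have no irreducible constituent in common for $i\neq j$, Schur's lemma gives $\Hom_\hol(W_i,W_j)=0$, so every $\hol$-equivariant $\QQ$-linear automorphism of $\lat\otimes\QQ\cong\QQ^n$ is block-diagonal with respect to the decomposition $\lat\otimes\QQ=\bigoplus_{i=1}^\ell W_i$; that is, $\ctlr{\GL{\QQ}}$ preserves each $W_i$. Write $\lat_i\coloneqq\lat\cap W_i$; this is a full lattice in $W_i$ (for $w\in W_i$ we have $Nw\in\lat$ for some $N\in\ZZ_{\geq1}$, whence $Nw\in\lat_i$), so $\lat'\coloneqq\bigoplus_{i=1}^\ell\lat_i$ is a full-rank, hence finite-index, sublattice of $\lat$.

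First I would record that $\ctlr{\Aut(\lat)}$ embeds into $\prod_{i=1}^\ell\ctlr[\hol|_{W_i}]{\Aut(\lat_i)}$. Indeed, any $\phi\in\ctlr{\Aut(\lat)}$ preserves each $W_i$ by the block-diagonality above, so $\phi(\lat_i)=\phi(\lat)\cap\phi(W_i)=\lat_i$, and $\phi$ restricts on $W_i$ to an element of $\ctlr[\hol|_{W_i}]{\Aut(\lat_i)}$; the assignment $\phi\mapsto(\phi|_{W_i})_i$ is an injective homomorphism because $\lat\otimes\QQ=\bigoplus_iW_i$. Next I would identify the target product with $\ctlr{\Aut(\lat')}$: by block-diagonality a $\hol$-equivariant automorphism preserving $\lat'$ is a direct sum $\bigoplus_i\phi_i$ with $\phi_i$ commuting with $\hol|_{W_i}$ and preserving $\lat_i$, and conversely any tuple of such $\phi_i$ assembles to an element of $\ctlr{\Aut(\lat')}$. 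Under this identification $\ctlr{\Aut(\lat)}$ is literally a subgroup of $\ctlr{\Aut(\lat')}$, since a $\hol$-equivariant automorphism of $\lat$ preserving every $W_i$ also preserves $\lat'$.

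It then remains to show $\ctlr{\Aut(\lat)}$ has finite index in $\ctlr{\Aut(\lat')}$. Here I would apply Lemma~\ref{lem:Aut-lat-fi} to the $\QQ$-vector space $\lat\otimes\QQ$ and the lattices $\lat,\lat'$, obtaining a subgroup $H\leq\Aut(\lat)\cap\Aut(\lat')$ of finite index in each. Then $H\cap\ctlr{\GL{\QQ}}$ has finite index in both $\ctlr{\Aut(\lat)}=\Aut(\lat)\cap\ctlr{\GL{\QQ}}$ and $\ctlr{\Aut(\lat')}=\Aut(\lat')\cap\ctlr{\GL{\QQ}}$; thus $\ctlr{\Aut(\lat)}$ and $\ctlr{\Aut(\lat')}=\prod_{i=1}^\ell\ctlr[\hol|_{W_i}]{\Aut(\lat\cap W_i)}$ share a common finite-index subgroup, and since the former is contained in the latter we conclude $[\ctlr{\Aut(\lat')}:\ctlr{\Aut(\lat)}]<\infty$, which is the claim. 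The only non-routine inputs are the block-diagonality of $\hol$-centralizers and the already-proved Lemma~\ref{lem:Aut-lat-fi}; the remaining arguments are bookkeeping with restrictions and direct sums of lattice automorphisms, so I do not anticipate a serious obstacle.
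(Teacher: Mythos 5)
Your proposal is correct and follows essentially the same route as the paper's own proof: Schur's lemma gives the block structure so that $\prod_i\ctlr[\hol|_{W_i}]{\Aut(\lat\cap W_i)}$ is identified with $\ctlr{\Aut(\lat')}$ for $\lat'=\bigoplus_i(\lat\cap W_i)$, the containment $\ctlr{\Aut(\lat)}\leq\ctlr{\Aut(\lat')}$ is observed directly, and Lemma~\ref{lem:Aut-lat-fi} applied to $\lat,\lat'$ (then intersected with the centralizer of $\hol$ in $\Aut(\lat\otimes\QQ)$) yields finiteness of the index. The only cosmetic difference is that you phrase the centralizer as $\ctlr{\GL{\QQ}}$ where the paper writes $\ctlr{\Aut(\lat\otimes\QQ)}$; these are the same group.
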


\begin{proof}
As $\ctlr{\Aut(\lat \otimes \QQ)}$ preserves each $W_i$, it follows from the Schur's Lemma that we have $\ctlr{\Aut(\lat \otimes \QQ)} = \prod_{i=1}^\ell \ctlr[\hol|_{W_i}]{\Aut(W_i)}$. In particular, every element of $\ctlr{\Aut(\lat)}$ fixes both $W_i$ and $\lat$, so it must fix their intersection; it follows that $\ctlr{\Aut(\lat)}$ is indeed a subgroup of $\prod_{i=1}^\ell \ctlr[\hol|_{W_i}]{\Aut(\lat \cap W_i)}$.

Now as $W_i \leq \lat \otimes \QQ$, for each element of $W_i$ some of its non-zero integer multiples belong to $\lat$, implying that $\lat \cap W_i$ is a lattice in $W_i$. By comparing the ranks and dimensions, it follows that $\lat' \coloneqq (\lat \cap W_1) \oplus \cdots \oplus (\lat \cap W_\ell)$ is a lattice in $\lat \otimes \RR$, and therefore, by Lemma~\ref{lem:Aut-lat-fi}, $\Aut(\lat)$ and $\Aut(\lat')$ share a common finite-index subgroup. As $\ctlr{\Aut(\lat \otimes \QQ)} \cap \Aut(\lat) = \ctlr{\Aut(\lat)}$ and $\ctlr{\Aut(\lat \otimes \QQ)} \cap \Aut(\lat') = \prod_{i=1}^\ell \ctlr[\hol|_{W_i}]{\Aut(\lat \cap W_i)}$, the result follows.
\end{proof}

\begin{cor} \label{cor:commens}
    The subgroups $\tNG$ and $\prod_{i=1}^\ell \ctlr[\hol|_{W_i}]{\Aut(\lat \cap W_i)}$ of $\GL{\RR}$ share a common finite-index subgroup.
\end{cor}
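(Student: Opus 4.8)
The plan is to deduce the statement simply by composing the finite-index relations established so far. First I would invoke Proposition~\ref{prop:normaliser-fi}, which already tells us that $\tNG$ is a finite-index subgroup of $\nmlr{\Aut(\lat)}$.

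Next I would observe that $\ctlr{\Aut(\lat)}$ is itself a finite-index subgroup of $\nmlr{\Aut(\lat)}$. Indeed, conjugation defines an action of $\nmlr{\Aut(\lat)}$ on the finite group $\hol$ whose kernel is exactly $\ctlr{\Aut(\lat)}$, so $\nmlr{\Aut(\lat)}/\ctlr{\Aut(\lat)}$ embeds into the finite group $\Aut(\hol)$. Consequently $\tNG$ and $\ctlr{\Aut(\lat)}$, being two finite-index subgroups of $\nmlr{\Aut(\lat)}$, both contain the common finite-index subgroup $\tNG \cap \ctlr{\Aut(\lat)}$.

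Finally, Lemma~\ref{lem:centr-split} says that $\ctlr{\Aut(\lat)}$ is a finite-index subgroup of $\prod_{i=1}^\ell \ctlr[\hol|_{W_i}]{\Aut(\lat \cap W_i)}$. Since the relation of ``sharing a common finite-index subgroup'' is transitive on the set of subgroups of the fixed ambient group $\GL{\RR}$, chaining the two commensurabilities above yields that $\tNG$ and $\prod_{i=1}^\ell \ctlr[\hol|_{W_i}]{\Aut(\lat \cap W_i)}$ share a common finite-index subgroup, as required.

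I do not expect any real obstacle here: the corollary is essentially bookkeeping, resting on Proposition~\ref{prop:normaliser-fi}, Lemma~\ref{lem:centr-split}, and the elementary fact that the normalizer of a finite subgroup contains its centralizer with finite index. The only point requiring minor care is to phrase the conclusion via intersections, so that the transitivity of commensurability is applied legitimately to subgroups of the common group $\GL{\RR}$.
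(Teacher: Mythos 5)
Your proof is correct and follows essentially the same route as the paper: both use Proposition~\ref{prop:normaliser-fi} and Lemma~\ref{lem:centr-split}, and both reduce to the observation that $\ctlr{\Aut(\lat)}$ has finite index in $\nmlr{\Aut(\lat)}$ because the conjugation map into the finite group $\Aut(\hol)$ has kernel $\ctlr{\Aut(\lat)}$. The only cosmetic difference is that you spell out the intersection-and-transitivity bookkeeping explicitly, whereas the paper leaves it implicit.
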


\begin{proof}
    By Proposition~\ref{prop:normaliser-fi} and Lemma~\ref{lem:centr-split}, we have inclusions of finite-index subgroups $\tNG \leq \nmlr{\Aut(\lat)}$ and $\ctlr{\Aut(\lat)} \leq \prod_{i=1}^\ell \ctlr[\hol|_{W_i}]{\Aut(\lat \cap W_i)}$. It is thus enough to show that $\ctlr{\Aut(\lat)}$ has finite index in $\nmlr{\Aut(\lat)}$. But indeed, $\ctlr{\Aut(\lat)}$ is the kernel of the map $\nmlr{\Aut(\lat)} \to \Aut(\hol)$ sending a matrix $A$ to conjugation by $A$.  Since $\hol$ is finite, so is $\Aut(\hol)$, implying that $\ctlr{\Aut(\lat)}$ has finite index in $\nmlr{\Aut(\lat)}$, as required.
\end{proof}

We now study $\ctlr[\hol|_{W_i}]{\Aut(\lat \cap W_i)}$. In order to simplify the notation, we write $W \coloneqq W_i$, $V_j \coloneqq V_{i,j}$ and $m \coloneqq m_i$. Slightly abusing the notation, we will also identify the subgroup $\hol \leq \GL{\RR}$ with its restriction $\hol|_U \leq \Aut(U) = \{ A \in \GL{\RR} \mid AU = U \}$ for an $\hol$-invariant subset $U \subseteq \RR^n$. Therefore, $W$ is a representation of $\hol$ over $\QQ$ with an invariant lattice $\lat \cap W \subset W$, and we have a splitting $W = V_1 \oplus \cdots \oplus V_m$, where the $V_j$ are pairwise isomorphic and irreducible over $\QQ$.

\subsection{The crystallographic case} \label{ssec:commens-cryst}

The first case is when we have an irreducible crystallographic subrepresentation: that is, when $V_j \otimes \RR$ is irreducible over $\RR$ for some (equivalently, any) $j$. Even though the result below follows from the more general considerations in the next two subsections, we have decided to give a shorter and more straightforward proof separately.

\begin{lem} \label{lem:crystallographic}
Suppose each $V_j \otimes \RR$ is an irreducible representation over $\RR$ of type $\KK$, where $\KK \in \{ \RR, \CC, \HH \}$. Then, under the isomorphism $\ctlr{\Aut(W \otimes \RR)} \cong \GL[m]{\KK}$, a finite index subgroup of $\ctlr{\Aut(\lat \cap W)}$ is identified with a finite-index subgroup of $\GL[m]{R}$, where $R \subseteq \KK$ is a discrete subring such that $(R,+)$ is a lattice in the $\RR$-vector space $(\KK,+)$.
\end{lem}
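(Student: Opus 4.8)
The plan is to pin down the real endomorphism algebra of $W$, descend to a $\QQ$-structure, and then pass to the lattice $\lat \cap W$.  First I would observe that since all $V_j$ are pairwise isomorphic irreducible $\QQ[\hol]$-modules, Schur's lemma gives $\End_{\hol}(W) \cong \Mat_{m}(D)$, where $D = \End_{\hol}(V_1)$ is a division algebra over $\QQ$.  The invertible elements of this algebra centralising $\hol$ form $\GL[m]{D}$, so $\ctlr{\Aut(W)} \cong \GL[m]{D}$.  Tensoring with $\RR$, the hypothesis that $V_j \otimes \RR$ is irreducible over $\RR$ of type $\KK$ means $D \otimes_{\QQ} \RR \cong \End_{\hol}(V_1 \otimes \RR) \cong \KK$; in particular $D$ is a $\QQ$-subalgebra of $\KK$ that spans $\KK$ over $\RR$, and $\ctlr{\Aut(W \otimes \RR)} \cong \GL[m]{\KK}$ compatibly with the inclusion $\GL[m]{D} \hookrightarrow \GL[m]{\KK}$.

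Next I would produce the subring $R$.  The strategy is to replace the $\hol$-equivariant decomposition $W = V_1 \oplus \cdots \oplus V_m$ by one for which the lattice splits nicely.  Fix $\hol$-equivariant isomorphisms $V_1 \xrightarrow{\sim} V_j$ and use them to identify $W \cong V_1 \otimes_{\QQ} \QQ^m = V_1^{\oplus m}$ as an $\hol$-module; under this identification $\ctlr{\Aut(W)}$ acts through the right $\GL[m]{D}$-action on $V_1 \otimes_{D} D^m$, where $D$ acts on $V_1$ on the right (an $(\hol,D)$-bimodule).  Now let $\Lambda_0 \subseteq V_1$ be a $\ZZ$-lattice; its endomorphism ring $\mathcal{O}_0 := \{ d \in D \mid \Lambda_0 d \subseteq \Lambda_0 \}$ is an order in $D$, and by Lemma~\ref{lem:Aut-lat-fi} it suffices to work with any convenient lattice in $W$.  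Choosing the lattice $\Lambda_0^{\oplus m} \cong \Lambda_0 \otimes_{\mathcal{O}_0} \mathcal{O}_0^m$, one checks directly that a matrix $A \in \GL[m]{D}$ centralising $\hol$ preserves $\Lambda_0^{\oplus m}$ if and only if $A \in \GL[m]{\mathcal{O}_0}$; hence with $R := \mathcal{O}_0$ we get that $\ctlr{\Aut(\Lambda_0^{\oplus m})}$ is exactly $\GL[m]{R}$ under our identifications.  Since $\mathcal{O}_0$ is an $\mathcal{O}_K$-order in $D$ (for $K$ the centre field of $D$) it is in particular a $\ZZ$-lattice in $D$, so $(R,+)$ is a lattice in $(D,+) \subseteq (\KK,+)$, and since $D$ spans $\KK$ over $\RR$, so does $R$; moreover $R$ is discrete in $\KK$.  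Finally, Lemma~\ref{lem:Aut-lat-fi} applied to the two lattices $\lat \cap W$ and $\Lambda_0^{\oplus m}$ in $W \otimes \RR = W \otimes \QQ$ shows $\Aut(\lat \cap W)$ and $\Aut(\Lambda_0^{\oplus m})$ share a finite-index subgroup, hence so do their intersections with $\ctlr{\GL{\RR}}$, which gives the claim.

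The main obstacle I anticipate is the bookkeeping in the middle step: carefully setting up $V_1$ as an $(\hol,D)$-bimodule, making the identification $W \cong V_1 \otimes_D D^m$ genuinely $\hol$-equivariant and $\GL[m]{D}$-equivariant on the correct sides, and then verifying that the order $\mathcal{O}_0 = \End_{\hol}(\Lambda_0)$ really is the stabiliser of $\Lambda_0^{\oplus m}$ inside $\GL[m]{D}$ (as opposed to merely being contained in it).  One has to be a little careful that $\End_{\hol}(\Lambda_0)$ is an order — i.e.\ that it is finitely generated as a $\ZZ$-module and spans $D$ over $\QQ$ — but this is standard: it is sandwiched between $\Lambda_0 \cdot (\text{something})$ and a maximal order, or alternatively one notes $D \cong \End_{\hol}(\Lambda_0) \otimes \QQ$ directly since clearing denominators turns any element of $D = \End_{\hol}(V_1)$ into one preserving $\Lambda_0$. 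Everything else — discreteness, spanning, the finite-index comparisons — is routine given the lemmas already established.
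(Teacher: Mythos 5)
Your proposal is correct and follows essentially the same route as the paper: both identify $\ctlr{\Aut(W)}$ with $\GL[m]{D}$ for $D = \End_{\hol}(V_1)$ a $\QQ$-division algebra whose completion is $\KK$, replace $\lat \cap W$ by a convenient product lattice $\Lambda_0^{\oplus m}$ (the paper uses $\lat' = \bigcap_j \phi_j^{-1}(\lat \cap V_j)$ rather than an arbitrary $\Lambda_0$, but Lemma~\ref{lem:Aut-lat-fi} makes the choice immaterial), take $R$ to be the order $\End_{\hol}(\Lambda_0)$, and compare $\Aut(\lat \cap W)$ with $\Aut(\Lambda_0^{\oplus m})$ via Lemma~\ref{lem:Aut-lat-fi}. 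The Morita/bimodule framing you use is a matter of presentation; the underlying computations are the same as in the paper.
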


\begin{proof}
We first explicitly describe the isomorphism $\ctlr{\Aut(W \otimes \RR)} \cong \GL[m]{\KK}$. We have $\KK = \End_{\hol}(V \otimes \RR)$, where $V$ is an irreducible $\QQ$-representation of $\hol$ such that there exist $\hol$-equivariant isomorphisms $\phi_j\colon V \to V_j$ of $\QQ$-vector spaces, for $1 \leq j \leq m$. The isomorphism $\Phi\colon \ctlr{\Aut(W \otimes \RR)} \to \GL[m]{\KK}$ then sends an automorphism $\alpha$ of $W \otimes \RR$ to the matrix $A = (a_{pq})_{1 \leq p,q \leq m}$ such that $\alpha|_{V_q \otimes \RR} \circ (\phi_q \otimes \RR) = \sum_{p=1}^m (\phi_p \otimes \RR) \circ a_{pq}$ for every $q$.

Now let $\lat' = \bigcap_{j=1}^m \phi_j^{-1}(\lat \cap V_j)$. As $V_j \leq (\lat \cap W) \otimes \QQ$ for $2 \leq j \leq m$, for each element of $\lat \cap V_1$ some of its non-zero integer multiples are mapped by $\phi_j \circ \phi_1^{-1}$ to $\lat \cap V_j$, implying that $\phi_1(\lat')$ has finite index in $\lat \cap V_1$; in particular, $\lat'$ is a lattice in $V$ and so in $V \otimes \RR$. It follows that $V \otimes \RR = \lat' \otimes \RR$ and therefore $R \coloneqq \End_{\hol}(\lat')$ can be identified with a subring of~$\KK$, which is discrete since $\lat'$ is finitely generated and discrete in $V \otimes \RR$.

As $R$ is discrete in $\KK$, in order to show that $(R,+)$ is a lattice in $(\KK,+)$ it is enough to show that $(R,+)$ spans $(\KK,+)$ over $\RR$. But since $V$ is dense in $V \otimes \RR$, it follows that $\KK = \End_{\hol}(V \otimes \RR)$ is a subspace of $\End(V \otimes \RR)$ containing $\End_{\hol}(V)$ as a dense subset. Now as $\lat' < V$ is a lattice, some non-zero integer multiple of any given element of $R = \End_{\hol}(\lat')$ will belong to $\End_{\hol}(V)$, implying that $(R,+)$ spans $(\KK,+)$ over $\RR$, as required.

Now it is clear from the construction that $\Phi^{-1}(\GL[m]{R})$ is precisely the group of automorphisms of $W$ that commute with $\hol$ and leave the lattice \[ \overline{\lat} \coloneqq \phi_1(\lat') \oplus \cdots \oplus \phi_{m_i}(\lat') \] invariant. In particular, we have $\Phi^{-1}(\GL[m]{R}) = \ctlr{\Aut(W)} \cap \Aut(\overline{\lat})$; note also that $\ctlr{\Aut(\lat \cap W)} = \ctlr{\Aut(W)} \cap \Aut(\lat \cap W)$. The result then follows from Lemma~\ref{lem:Aut-lat-fi}.
\end{proof}

\begin{proof}[Proof of Corollary~\ref{cor:commensurable-cryst}]
    The description of $\tau(\nmlr[\Gamma]{\Aff(n)})$ follows directly from Lemma~\ref{lem:crystallographic} and Corollary~\ref{cor:commens}. The description of $\Mflat$ then follows from Proposition~\ref{prop:Mflat-orb} together with Theorem~\ref{thm:Bettiol-Derdzinski-Piccione} and the surrounding discussion.
\end{proof}

\subsection{The non-crystallographic case: fields} \label{ssec:commens-ncryst-fields}

In order to describe $\ctlr{\Aut(\lat \cap W)}$, we first describe $\ctlr{\Aut(W)}$.  We will discuss the restriction to $\Aut(\lat \cap W)$ in the next subsections.

\begin{sta} \label{sta:Schur1}
    From now on, we assume that the character of every (equivalently, some) irreducible constituent of $V_j \otimes \CC$ for every (equivalently, some) $j \in \{1,\ldots,m\}$ has Schur index one over $\QQ$. 
\end{sta}

Let $K$ be a Galois number field.  Note that $\Gal(K/\QQ)$-actions on $V_j$ (trivially) and $K$ (canonically) induce a $\Gal(K/\QQ)$-action on $V_j \otimes K$. Moreover, given any $h \in \hol$, $\sigma \in \Gal(K/\QQ)$, $v \in V_j$ and $k \in K$, we have
\[
h \cdot (v \otimes k)^\sigma = h \cdot (v \otimes k^\sigma) = (h \cdot v) \otimes k^\sigma = ((h \cdot v) \otimes k)^\sigma = (h \cdot (v \otimes k))^\sigma,
\]
implying that $h \cdot w^\sigma = (h \cdot w)^\sigma$ for all $w \in V_j \otimes K$. Note that the map $V_j \otimes K \to V_j \otimes K, w \mapsto w^\sigma$ is \emph{not} $K$-linear, but rather satisfies the equation $(kw)^\sigma = k^\sigma w^\sigma$ for $k \in K$ and $w \in V_j \otimes K$, and in particular maps $K$-linear subspaces to $K$-linear subspaces. Given a subset $U \subseteq V_j \otimes K$, we write $U^\sigma \coloneqq \{ u^\sigma \mid u \in U \}$.

In the next two results, we describe the $\hol$-representation $V_j$ in terms of an irreducible component of $V_j \otimes \RR$.

\begin{lem} \label{lem:Gal-premutes}
    If $U_j' \leq V_j \otimes \RR$ is an irreducible $\hol$-subspace with character $\chi\colon \hol \to \RR$, then $U_j' \cong U_j \otimes_{K} \RR$, where $U_j$ is an irreducible $K$-linear $\hol$-subspace for $K = \QQ(\chi)$.  Furthermore, we have $V_j \otimes K = \bigoplus_{\sigma \in \Gal(K/\QQ)} U_j^\sigma$.
\end{lem}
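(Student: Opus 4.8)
The plan is to exploit the classical theory of Schur indices and fields of realization, using Standing Assumption~\ref{sta:Schur1} to guarantee that the character field $K=\QQ(\chi)$ is itself a field of realization for the irreducible representation with character $\chi$. First I would note that $V_j$ is irreducible over $\QQ$ (being one of the $V_{i,j}$ in the isotypic decomposition of $\lat \otimes \QQ$), hence $V_j \otimes \CC$ is a direct sum of Galois-conjugate absolutely irreducible $\hol$-representations, each with Schur index one over $\QQ$ by the assumption. Pick the irreducible real subrepresentation $U_j' \leq V_j \otimes \RR$ with character $\chi$. The real type being $\KK$, the complexification $U_j' \otimes_\RR \CC$ is either absolutely irreducible (type $\RR$) or a sum of two conjugate absolutely irreducibles (types $\CC$, $\HH$); in all cases the character $\chi$ is real-valued, so $K = \QQ(\chi) \subseteq \RR$. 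The hypothesis that $\chi$ has Schur index one over $\QQ$ means precisely that there is an irreducible $K$-linear $\hol$-representation $U_j$ affording $\chi$ (as a $K$-character), and that $U_j$ remains irreducible after any field extension $K \to F$ for $F \supseteq K$; in particular $U_j \otimes_K \RR$ is an irreducible real $\hol$-representation affording $\chi$, hence isomorphic to $U_j'$.

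For the ``furthermore'' part I would argue as follows. Since $U_j \otimes_K \RR \cong U_j' \leq V_j \otimes \RR$ and both $V_j$ and the relevant constituents are semisimple, the representation $U_j$ occurs as a $K$-subrepresentation of $V_j \otimes K$; indeed $\Hom_{\hol}(U_j, V_j \otimes K) \neq 0$ because after tensoring up to $\RR$ (which is faithful and exact on Hom-spaces of finite-dimensional semisimple modules) it becomes $\Hom_{\hol}(U_j', V_j \otimes \RR) \neq 0$. Fix such an embedding $U_j \hookrightarrow V_j \otimes K$. Applying the semilinear operators $w \mapsto w^\sigma$ for $\sigma \in \Gal(K/\QQ)$, and using the compatibility $h \cdot w^\sigma = (h \cdot w)^\sigma$ established in the paragraph preceding the lemma, each $U_j^\sigma$ is again a $K$-linear $\hol$-subrepresentation of $(V_j \otimes K)^\sigma = V_j \otimes K$, irreducible because applying $(\cdot)^{\sigma^{-1}}$ gives a bijection between its subrepresentations and those of $U_j$. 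The character of $U_j^\sigma$ is $\chi^\sigma$. Now I would count dimensions: $\dim_K(V_j \otimes K) = \dim_\QQ V_j$, and since $V_j$ is $\QQ$-irreducible with $\QQ$-character $\chi_{V_j}$, the $\CC$-character of $V_j$ is $\sum_{\sigma \in \Gal(K/\QQ)/\mathrm{stab}} m\,\chi^\sigma$ for suitable multiplicity; Schur index one forces $m=1$ and the stabilizer of $\chi$ in $\Gal(K/\QQ)$ to be trivial (as $K = \QQ(\chi)$), so $\dim_\QQ V_j = [K:\QQ]\cdot \dim_K U_j = \sum_{\sigma \in \Gal(K/\QQ)} \dim_K U_j^\sigma$. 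Hence the $\hol$-equivariant $K$-linear map $\bigoplus_{\sigma} U_j^\sigma \to V_j \otimes K$ obtained from the embeddings is a map between $K$-vector spaces of equal dimension; it is injective because the $U_j^\sigma$ have pairwise distinct characters $\chi^\sigma$ (again using that $\mathrm{stab}(\chi)$ is trivial) and hence are non-isomorphic simple $K[\hol]$-modules, so their sum inside $V_j \otimes K$ is direct. Therefore it is an isomorphism, proving $V_j \otimes K = \bigoplus_{\sigma \in \Gal(K/\QQ)} U_j^\sigma$.

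The main obstacle I anticipate is making the Schur-index bookkeeping fully rigorous: specifically, checking that $K = \QQ(\chi)$ is a splitting field ``up to Galois conjugacy'' in the precise sense that the absolutely irreducible character $\chi$ is realizable over $K$ and has trivial Galois stabilizer in $\Gal(K/\QQ)$. This is exactly the content of Standing Assumption~\ref{sta:Schur1} combined with the standard fact (see, e.g., the references in the discussion before Theorem~\ref{thm:commensurable}) that an irreducible $\CC$-character is realizable over its character field if and only if its Schur index over $\QQ$ is one; so the obstacle is really one of citing and applying the classical representation theory correctly rather than a genuine difficulty. A secondary, purely bookkeeping point is to verify that the real type $\KK \in \{\RR,\CC,\HH\}$ of $U_j'$ does not interfere: since we only need $U_j'$ to be realizable over $K \subseteq \RR$ as an irreducible \emph{real} representation (not an absolutely irreducible one), and since $U_j \otimes_K \RR$ is automatically irreducible over $\RR$ when $U_j$ is absolutely irreducible over $K$ with $K$-character equal to the real character $\chi$, no case distinction on $\KK$ is actually needed for this lemma.
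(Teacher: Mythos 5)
Your overall strategy matches the paper's, but the argument for the first part of the lemma has a genuine gap in the cases where the real type $\KK$ is $\CC$ or $\HH$, and your closing ``bookkeeping'' paragraph dismisses the problem rather than resolving it.

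The issue is the sentence ``The hypothesis that $\chi$ has Schur index one over $\QQ$ means precisely that there is an irreducible $K$-linear $\hol$-representation $U_j$ affording $\chi$ \dots and that $U_j$ remains irreducible after any field extension $K \to F$.'' This conflates two different characters. Standing Assumption~\ref{sta:Schur1} is a hypothesis on the \emph{absolutely irreducible} constituents of $V_j \otimes \CC$, i.e.\ on $\widehat\chi$, the character of an irreducible $\CC$-constituent $\widehat U_j'$ of $U_j' \otimes_\RR \CC$. When $\KK \ne \RR$, the real character $\chi = \widehat\chi + \overline{\widehat\chi}$ is \emph{not} absolutely irreducible, so the Schur index notion does not apply to $\chi$ directly; and the claimed property ``remains irreducible after any field extension'' is simply false for the $U_j$ one obtains in those cases, because $U_j \otimes_K \widehat K$ already splits as $\widehat U_j \oplus \widehat U_j^{\,\iota}$ (where $\widehat K = \QQ(\widehat\chi)$ and $\iota$ is complex conjugation). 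Your final paragraph tries to bypass this by asserting that ``$U_j \otimes_K \RR$ is automatically irreducible over $\RR$ when $U_j$ is absolutely irreducible over $K$'' --- but in the $\CC$ and $\HH$ cases $U_j$ is precisely \emph{not} absolutely irreducible over $K$, so the hypothesis of that remark fails and the asserted shortcut is vacuous. In other words, a case distinction on whether $U_j'\otimes_\RR\CC$ is irreducible cannot be avoided. The paper's proof does make this distinction: it first treats the case $U_j' \otimes_\RR \CC$ irreducible (your argument works there verbatim), and in the remaining cases it takes the absolutely irreducible $\widehat K$-subspace $\widehat U_j$ (using Schur index one for $\widehat\chi$, which is the correct application of the hypothesis), sets $K = \widehat K \cap \RR$, and builds $U_j$ explicitly as $\{\, u + u^\iota \mid u \in \widehat U_j \,\}$, then checks $K = \QQ(\chi)$ afterwards. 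Incidentally, it is also worth verifying -- which you do not -- that the $K$-character of the constructed $U_j$ really is $\chi$ in the split case; this follows from the trace formula $\Tr_K = \Tr_{\widehat K/K} \circ \Tr_{\widehat K}$, but it is not automatic from your phrasing.

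Your treatment of the ``furthermore'' part is sound and takes a slightly different route from the paper: where the paper invokes \cite[Theorem~9.21]{Isaacs} to get that the irreducible constituents of $V_j \otimes K$ are, up to similarity, exactly the $U_j^\sigma$, and then uses Schur's Lemma to pin them down as subspaces, you instead embed each $U_j^\sigma$ directly and conclude by a dimension count together with the pairwise non-isomorphism coming from $K = \QQ(\chi)$. Both arguments are fine; yours is a touch more elementary in avoiding the Clifford-theoretic input, at the cost of requiring you to first know that $\Hom_{\hol}(U_j, V_j \otimes K) \neq 0$ and that the characters $\chi^\sigma$ are pairwise distinct, both of which you do justify. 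Still, this part depends on having the correct $U_j$ with character $\chi$ in hand, which is exactly what the first part must establish, so the gap above propagates.
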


\begin{proof}
    Suppose first that $U_j' \otimes_\RR \CC$ is irreducible over $\CC$.  Since $U_j' \otimes_\RR \CC$ has Schur index one, it is realizable over $K$, and so $U_j' = U_j \otimes_{K} \RR$ for an irreducible $K$-linear $\hol$-subspace $U_j$.  Then, by \cite[Theorem~9.21]{Isaacs}, the irreducible components of $V_j \otimes K$, up to similarity, are precisely $U_j^\sigma$ for $\sigma \in \Gal(K/\QQ)$: that is, $V_j \otimes K = \bigoplus_{\sigma \in \Gal(K/\QQ)} U_{j,\sigma}$ for some irreducible $K$-linear $\hol$-subspaces $U_{j,\sigma} \leq V_j \otimes K$, with $U_{j,\sigma} \cong U_j^\sigma$.

    Now since $K = \QQ(\chi)$ we have $\chi^\sigma \neq \chi^\tau$, and therefore $U_j^\sigma \not\cong U_j^\tau$ for any distinct elements $\sigma,\tau \in \Gal(K/\QQ)$, implying that the subrepresentations $\{ U_{j,\sigma} \mid \sigma \in \Gal(K/\QQ) \}$ are pairwise non-isomorphic. In particular, given $\sigma \in \Gal(K/\QQ)$, it follows from Schur's Lemma that $U_{j,\sigma}$ is the unique $\hol$-subrepresentation of $V_j \otimes K$ that is isomorphic to $U_j^\sigma$; since $U_{j,1}^\sigma$ is another such subrepresentation, we have $U_{j,\sigma} = U_{j,1}^\sigma$.  It follows that $V_j \otimes K = \bigoplus_{\sigma \in \Gal(K/\QQ)} U_j^\sigma$ for $U_j = U_{j,1}$, as required.

    Suppose now instead that $U_j' \otimes_\RR \CC$ is not irreducible, and let $\widehat{U}_j'$ be its irreducible constituent. Then, since $\widehat{U}_j'$ has Schur index one, we have $\widehat{U}_j' = \widehat{U}_j \otimes_{\widehat{K}} \CC$ for an irreducible $\widehat{K}$-linear $\hol$-subspace $\widehat{U}_j$, where $\widehat{K} = \QQ(\widehat{\chi})$ and $\widehat{\chi}$ is the character of $\widehat{U}_j$. Moreover, by the argument above, we have $V_j \otimes \widehat{K} = \bigoplus_{\sigma \in \Gal(\widehat{K}/\QQ)} \widehat{U}_j^\sigma$.

    Now let $\iota \in \Gal(\widehat{K}/\QQ)$ be the complex conjugation, and let $U_j = \{ u + u^\iota \mid u \in \widehat{U}_j \}$. Then $U_j$ is a $K$-linear $\hol$-invariant subspace of $\widetilde{U}_j \coloneqq (V_j \otimes K) \cap (\widehat{U}_j \oplus \widehat{U}_j^\iota)$, where $K = \widehat{K} \cap \RR$; here the $\hol$-invariance follows from the fact that $\Gal(K/\QQ)$ is abelian, as $K$ is a subfield of a cyclotomic field.  Therefore, $U_j$ is either zero or equal to $\widetilde{U}_j$ (since $\widehat{U}_j$ is not realizable over $\RR$ while $\widetilde{U}_j$ is, implying that $\widetilde{U}_j$ is irreducible over $K$ as an $\hol$-subspace).  However, if $U_j$ was zero then this would imply that each $u \in \widehat{U}_j$ is a sum of elements of the form $v \otimes \mathbf{i}k$ for $v \in V_j$ and $k \in \RR$, implying that if $\kappa$ is any non-real element of $\widehat{K}$ then $(\kappa - \kappa^\iota)\widehat{U}_j \subseteq V_j \otimes K$, contradicting the fact that $\widehat{U}_j$ is not realizable over $\RR$. Therefore, we must have $U_j = \widetilde{U}_j$.

    It now follows, by considering dimensions, that $\bigoplus_{\sigma \in \Gal(K/\QQ)} U_j^\sigma$ is the whole of $V_j \otimes K$. Furthermore, we have $\QQ(\chi) \subseteq \widehat{K} \cap \RR = K$, whereas the fact that $\{ \widehat{U}_j^\sigma \mid \sigma \in \Gal(\widehat{K}/\QQ) \}$ are pairwise non-isomorphic implies that so are $\{ U_j^\sigma \mid \sigma \in \Gal(K/\QQ) \}$, and thus $\chi^\sigma \neq \chi$ for $1 \neq \sigma \in \Gal(K/\QQ)$. Hence $K = \QQ(\chi)$, as required.
\end{proof}

\begin{cor} \label{cor:Vj}
In the notation of Lemma~\ref{lem:Gal-premutes}, we have $V_j = \left\{ \sum_{\sigma \in \Gal(K/\QQ)} u^\sigma \,\middle|\, u \in U_j \right\}$.
\end{cor}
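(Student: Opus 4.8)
The plan is to realise $V_j$ as the subspace of $V_j \otimes K$ fixed by the $\Gal(K/\QQ)$-action described before Lemma~\ref{lem:Gal-premutes}, and then to read off the claimed formula from the decomposition $V_j \otimes K = \bigoplus_{\sigma \in \Gal(K/\QQ)} U_j^\sigma$ supplied by that lemma. The first step is the elementary observation that, since $K/\QQ$ is Galois and hence $K^{\Gal(K/\QQ)} = \QQ$, an element $w \in V_j \otimes K$ is fixed by the whole of $\Gal(K/\QQ)$ if and only if it lies in $V_j \otimes \QQ = V_j$: concretely, fixing a $\QQ$-basis $v_1,\ldots,v_r$ of $V_j$ and writing $w = \sum_i v_i \otimes k_i$, the condition $w^\sigma = w$ for all $\sigma$ forces every $k_i$ to lie in $\QQ$. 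Thus $V_j = (V_j \otimes K)^{\Gal(K/\QQ)}$.

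Granting this, the inclusion ``$\supseteq$'' is immediate: for $u \in U_j$ and $\tau \in \Gal(K/\QQ)$ one has $\bigl(\sum_{\sigma} u^\sigma\bigr)^\tau = \sum_{\sigma} u^{\tau\sigma} = \sum_{\sigma} u^\sigma$ after reindexing the sum over $\Gal(K/\QQ)$, so $\sum_\sigma u^\sigma$ is Galois-fixed and hence lies in $V_j$. For the reverse inclusion, take $w \in V_j$ and, regarding $w \in V_j \otimes K$, use Lemma~\ref{lem:Gal-premutes} to write $w = \sum_{\sigma} w_\sigma$ with $w_\sigma \in U_j^\sigma$. Since the (semilinear) map $v \mapsto v^\tau$ on $V_j \otimes K$ is additive, carries $K$-subspaces to $K$-subspaces, and satisfies $(U_j^\sigma)^\tau = U_j^{\tau\sigma}$, applying $\tau$ to this expression and using $w^\tau = w$ gives $\sum_\rho w_\rho = \sum_\rho (w_{\tau^{-1}\rho})^\tau$ with the $\rho$-th terms on both sides lying in the summand $U_j^\rho$; since the sum $\bigoplus_\sigma U_j^\sigma$ is direct, we may compare components to obtain $w_\rho = (w_{\tau^{-1}\rho})^\tau$ for all $\rho,\tau$. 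Specialising to $\rho = \tau$ yields $w_\tau = (w_1)^\tau$ for every $\tau$, where $w_1 \in U_j$ is the component of $w$ in $U_j^{\mathrm{id}} = U_j$. Hence $w = \sum_\tau (w_1)^\tau$, which is of the required form.

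This is a Galois-descent argument, and I do not anticipate a genuine obstacle; the delicate points are purely organisational. The main one is to confirm that the semilinear maps $v \mapsto v^\tau$ genuinely permute the summands $U_j^\sigma$ of $V_j \otimes K$ in the stated way --- this follows from the identity $h \cdot w^\sigma = (h \cdot w)^\sigma$ recorded before Lemma~\ref{lem:Gal-premutes} together with Schur's Lemma, exactly as in the proof of that lemma (which shows $U_{j,\sigma} = U_{j,1}^\sigma$) --- and then to keep track of the indices carefully in the componentwise comparison. I would also remark that neither the reindexing in the ``$\supseteq$'' step nor the extraction $w_\tau = (w_1)^\tau$ requires $\Gal(K/\QQ)$ to be abelian, although in fact it is, $K = \QQ(\chi)$ being a subfield of a cyclotomic field.
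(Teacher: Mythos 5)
Your proof is correct and follows essentially the same route as the paper: both arguments identify $V_j$ with the $\Gal(K/\QQ)$-fixed subspace of $V_j \otimes K$ via a $\QQ$-basis computation, and then use the direct-sum decomposition $V_j \otimes K = \bigoplus_\sigma U_j^\sigma$ from Lemma~\ref{lem:Gal-premutes} to compare components and extract $w_\tau = (w_1)^\tau$. The only cosmetic difference is the indexing convention for composing Galois automorphisms, which (as you note) is immaterial since $\Gal(K/\QQ)$ is abelian here.
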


\begin{proof}
We will show that both $V_j$ and $V_j' \coloneqq \left\{ \sum_{\sigma \in \Gal(K/\QQ)} u^\sigma \,\middle|\, u \in U_j \right\}$ are equal to the fixed set of $\Gal(K/\QQ)$, i.e.\ equal to $X \coloneqq \{ v \in V_j \otimes K \mid v^\sigma = v \text{ for all } \sigma \in \Gal(K/\QQ) \}$.

It is clear that $V_j \subseteq X$. On the other hand, let $(e_1,\ldots,e_q)$ be a basis for $V_j$ over $\QQ$. Then any element $v \in V_j \otimes K$ has a unique expression of the form $v = \sum_{i=1}^q e_i \otimes k_i$ for some $k_1,\ldots,k_q \in K$. If $v \in X$, we then have $\sum_{i=1}^q e_i \otimes (k_i-k_i^\sigma) = v-v^\sigma = 0$ for all $\sigma \in \Gal(K/\QQ)$, implying (by the uniqueness of expressions) that $k_i^\sigma = k_i$ for all $\sigma \in \Gal(K/\QQ)$ and hence $k_i \in \QQ$. Thus $v \in V_j$, as required.

Now it is clear that $V_j' \subseteq X$. On the other hand, since $V_j \otimes K = \bigoplus_{\sigma \in \Gal(K/\QQ)} U_j^\sigma$ by Lemma~\ref{lem:Gal-premutes}, any element $v \in V_j \otimes K$ has a unique expression of the form $v = \sum_{\sigma \in \Gal(K/\QQ)} u_\sigma^\sigma$ for some elements $u_\sigma \in U_j$. If $v \in X$ and $\tau \in \Gal(K/\QQ)$, we then have $v = v^{\tau} = \sum_{\sigma \in \Gal(K/\QQ)} u_\sigma^{\sigma\tau}$ and therefore, by the uniqueness of the expression $v = \sum_{\sigma \in \Gal(K/\QQ)} u_\sigma^{\sigma}$, we have $u_{\sigma\tau}^{\sigma\tau} = u_{\sigma}^{\sigma\tau}$ for all $\sigma \in \Gal(K/\QQ)$ and in particular $u_\tau^\tau = u_{\Id}^\tau$. Thus $v = \sum_{\sigma \in \Gal(K/\QQ)} u_{\Id}^{\sigma} \in V_j'$, as required.
\end{proof}

It follows that we have
\[
W \otimes K = \bigoplus_{\sigma \in \Gal(K/\QQ)} T^\sigma, \qquad\text{where}\quad T = \bigoplus_{j=1}^m U_j,
\]
and where $W = \{ \sum_{\sigma \in \Gal(K/\QQ)} t^\sigma \mid t \in T \}$; here the $\Gal(K/\QQ)$ action on $W \otimes K$ is defined as the diagonal action on all the $V_j \otimes K$. Note that two irreducible $K$-linear $\hol$-subspaces $U \leq T^{\sigma}$ and $U' \leq T^{\sigma'}$ are isomorphic if and only if $\sigma = \sigma'$.

Given any $\alpha \in \Aut_K(T)$ and $\sigma \in \Gal(K/\QQ)$, we may define a map $\alpha^\sigma\colon T^\sigma \to T^\sigma$ by setting $\alpha^\sigma(t^\sigma) = \alpha(t)^\sigma$ for $t \in T$; it is easy to verify that $\alpha^\sigma$ is linear over $K$, i.e.\ $\alpha^\sigma \in \Aut_K(T^\sigma)$. We may also define a map $\widetilde\alpha \in \Aut_K(W \otimes K)$ by setting $\widetilde\alpha(\sum_{\sigma \in \Gal(K/\QQ)} t_\sigma) = \sum_{\sigma \in \Gal(K/\QQ)} \alpha^\sigma(t_\sigma)$ whenever $t_\sigma \in T^\sigma$.  This allows us to describe the centralizer of $\hol$ in $\Aut(W)$, as follows.

\begin{lem} \label{lem:ctlrK}
We have $\ctlr{\Aut(W)} = \{ \widetilde\alpha|_W \mid \alpha \in \ctlr{\Aut_K(T)} \}$.
\end{lem}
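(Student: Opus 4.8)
The plan is to prove the two inclusions separately, exploiting the decomposition $W \otimes K = \bigoplus_{\sigma \in \Gal(K/\QQ)} T^\sigma$ and the fact, coming from Lemma~\ref{lem:Gal-premutes} together with the mutual isomorphism of the $V_j$, that the $T^\sigma$ are the $\hol$-isotypic blocks of $W \otimes K$ regrouped by Galois conjugacy class, with distinct $T^\sigma$ sharing no irreducible $\hol$-constituent.

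For the inclusion ``$\supseteq$'', I would take $\alpha \in \ctlr{\Aut_K(T)}$ and check directly that $\widetilde\alpha$ restricts to $W$ and lands in $\ctlr{\Aut(W)}$. For $w = \sum_{\sigma} t^\sigma$ with $t \in T$ one has $\widetilde\alpha(w) = \sum_\sigma \alpha^\sigma(t^\sigma) = \sum_\sigma \alpha(t)^\sigma \in W$ since $\alpha(t) \in T$; since $\alpha \mapsto \widetilde\alpha$ is a group homomorphism, $\widetilde{\alpha^{-1}} = (\widetilde\alpha)^{-1}$ also preserves $W$, so $\widetilde\alpha|_W$ is a $\QQ$-linear automorphism of $W$. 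As $h \cdot t^\sigma = (h \cdot t)^\sigma$ and $\alpha$ commutes with $\hol$, each $\alpha^\sigma$ commutes with $\hol$ on $T^\sigma$, so $\widetilde\alpha$, and hence $\widetilde\alpha|_W$, commutes with $\hol$; thus $\widetilde\alpha|_W \in \ctlr{\Aut(W)}$.

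For the inclusion ``$\subseteq$'', I would start with $\beta \in \ctlr{\Aut(W)}$ and extend scalars to $\beta \otimes K \in \Aut_K(W \otimes K)$, which still commutes with $\hol$. Two observations then finish the argument. First, since distinct $T^\sigma$ share no $\hol$-constituent, Schur's Lemma forces $\beta \otimes K$ to preserve each isotypic block and hence each $T^\sigma$; in particular $\alpha \coloneqq (\beta \otimes K)|_T$ lies in $\ctlr{\Aut_K(T)}$ (it is $K$-linear, $\hol$-equivariant, and invertible with inverse $(\beta^{-1} \otimes K)|_T$). Second, because $\beta$ is defined over $\QQ$, the map $\beta \otimes K = \beta \otimes_\QQ \mathrm{id}_K$ commutes with the semilinear $\Gal(K/\QQ)$-action $\mathrm{id}_W \otimes \tau$ on $W \otimes K$, that is, $(\beta \otimes K)(w^\tau) = ((\beta \otimes K)(w))^\tau$. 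Combining these, for $t \in T$ and $\tau \in \Gal(K/\QQ)$ we get $(\beta \otimes K)(t^\tau) = ((\beta \otimes K)(t))^\tau = \alpha(t)^\tau = \alpha^\tau(t^\tau)$, so $(\beta \otimes K)|_{T^\tau} = \alpha^\tau$ for every $\tau$; hence $\beta \otimes K = \widetilde\alpha$ on all of $W \otimes K$, and restricting to $W$ yields $\beta = \widetilde\alpha|_W$.

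The argument is essentially formal, and I do not anticipate a serious obstacle. The one point that needs a little care is the identification of the $T^\sigma$ as full isotypic blocks of $W \otimes K$ that pairwise share no constituents: this uses Lemma~\ref{lem:Gal-premutes} (hence Standing Assumption~\ref{sta:Schur1}) to know $U_j^\sigma \not\cong U_j^\tau$ for $\sigma \neq \tau$, and the mutual isomorphism $V_1 \cong \cdots \cong V_m$ (so that one may take the $U_j$ with a common character and thus $U_j \cong U_1$ over $K$) to know each $T^\sigma$ is isotypic of type $U_1^\sigma$. Once this is established, Schur's Lemma does the rest.
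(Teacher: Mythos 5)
Your proof is correct and follows essentially the same approach as the paper: both directions proceed by extending scalars to $W \otimes K$, using the blockwise decomposition into the $T^\sigma$ and Schur's Lemma to see that any $\hol$-equivariant automorphism preserves each $T^\sigma$, and then using the rationality of $\beta$ to identify the block maps with $\alpha^\sigma$. The only cosmetic difference is that you phrase the final step via the Galois equivariance of $\beta \otimes \mathrm{id}_K$, whereas the paper derives the identity $\beta_\sigma = \beta_{\mathrm{Id}}^\sigma$ from the requirement that $\beta(\sum_\sigma t^\sigma)$ lands in the $\Gal(K/\QQ)$-fixed set $W$; these are two ways of expressing the same fact.
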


\begin{proof}
Note first that $\widetilde\alpha|_W \in \Aut(W)$ for all $\alpha \in \Aut_K(T)$. Indeed, by the description of $W \otimes K$ above, given $w \in W$ we have $w = \sum_{\sigma \in \Gal(K/\QQ)} t^\sigma$ for some $t \in T$, implying that $\widetilde\alpha(w) = \sum_{\sigma \in \Gal(K/\QQ)} \alpha^\sigma(t^\sigma) = \sum_{\sigma \in \Gal(K/\QQ)} \alpha(t)^\sigma \in W$, as required. Since $h \cdot w^\sigma = (h \cdot w)^\sigma$ for all $h \in \hol$, $\sigma \in \Gal(K/\QQ)$ and $w \in W \otimes K$, it also follows that $\widetilde\alpha|_W$ centralizes $\hol$ whenever $\alpha$ does.

Conversely, let $\beta \in \ctlr{\Aut(W)}$, and let $\beta' = \beta \otimes K \in \Aut_K(W \otimes K)$. Since $\beta'$ centralizes $\hol$, it follows from the decomposition of $W \otimes K$ and Schur's Lemma that there exist automorphisms $\beta_\sigma \in \ctlr{\Aut_K(T^\sigma)}$ such that $\beta'(\sum_{\sigma \in \Gal(K/\QQ)} t_\sigma) = \sum_{\sigma \in \Gal(K/\QQ)} \beta_\sigma(t_\sigma)$ whenever $t_\sigma \in T^\sigma$. It is now enough to show that $\beta_\sigma = \beta_{\Id}^\sigma$ for all $\sigma$. But note that since $\beta \in \Aut(W)$ we have $\beta(\sum_{\sigma \in \Gal(K/\QQ)} t^\sigma) = \sum_{\sigma \in \Gal(K/\QQ)} \beta_\sigma(t^\sigma) \in W$ for all $t \in T$, implying that $\beta_{\Id}^\sigma(t^\sigma) = \beta_{\Id}(t)^\sigma = \beta_\sigma(t^\sigma)$; since this holds for every $t \in T$, we indeed have $\beta_\sigma = \beta_{\Id}^\sigma$, as required.
\end{proof}

\subsection{Description of the division algebra} \label{ssec:commens-ncryst-divalg}

In order to describe the groups $\ctlr{\Aut(W)}$ and $\ctlr{\Aut(\lat \cap W)}$, we first give a description $\ctlr{\Aut_K(T)}$ as $\GL[m]{L}$ for some associative division algebra $L$.

\begin{prop} \label{prop:AutU-ncryst}
Suppose $U_1 \otimes_K \RR$ (and thus each $U_j \otimes_K \RR$) is an irreducible $\RR$-representation of type $\KK \in \{ \RR, \CC, \HH \}$. Then, under the isomorphism $\ctlr{\Aut_\RR(T \otimes_K \RR)} \cong \GL[m]{\KK}$, the subgroup $\ctlr{\Aut_K(T)} \leq \ctlr{\Aut_\RR(T \otimes_K \RR)}$ is identified with $\GL[m]{L}$, where $L \subseteq \KK$ is an associative division algebra over $K$ such that the action of every $\sigma \in \Gal(K/\QQ)$ on $K$ extends to a ring isomorphism $L \cong L^\sigma$ for some $K$-subalgebra $L^\sigma \subseteq \KK$, and such that the canonical map $L \otimes_K \RR \to \KK$ is an isomorphism.
\end{prop}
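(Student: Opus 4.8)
The plan is to read off a matrix description of $\ctlr{\Aut_K(T)}$ from Schur's lemma, extract the division algebra $L$ as an endomorphism ring, and then verify the three asserted properties one at a time. First I would set $L \coloneqq \End_{K[\hol]}(U_1)$; as $U_1$ is irreducible over $K$ and $K[\hol]$ is a finite-dimensional $K$-algebra, Schur's lemma shows that $L$ is a finite-dimensional (associative) division algebra over $K$. Fixing $K[\hol]$-isomorphisms $\psi_j \colon U_1 \to U_j$ with $\psi_1 = \Id$, the standard computation of the endomorphism ring of a direct sum of copies of a module identifies $\End_{K[\hol]}(T)$ with $\Mat_m(L)$, hence $\ctlr{\Aut_K(T)} = \Aut_{K[\hol]}(T)$ with $\GL[m]{L}$; applying extension of scalars along $K \hookrightarrow \RR$ to the $\psi_j$ identifies, in the same way, $\ctlr{\Aut_\RR(T \otimes_K \RR)}$ with $\GL[m]{\KK}$, where $\End_{\RR[\hol]}(U_1 \otimes_K \RR) \cong \KK$ by hypothesis. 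Under these identifications the inclusion $\ctlr{\Aut_K(T)} \leq \ctlr{\Aut_\RR(T \otimes_K \RR)}$, which is just $\alpha \mapsto \alpha \otimes_K \Id_\RR$, becomes entrywise application of the injective $K$-algebra homomorphism $L = \End_{K[\hol]}(U_1) \hookrightarrow \End_{\RR[\hol]}(U_1 \otimes_K \RR) \cong \KK$, realising $L$ as a $K$-subalgebra of $\KK$.

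For the claim that the canonical map $L \otimes_K \RR \to \KK$ (the $\RR$-linear extension of the inclusion above) is an isomorphism, I would invoke that $\Hom$ out of a finitely presented module commutes with flat base change: $U_1$ is finite-dimensional over $K$, hence finitely presented over $K[\hol]$, and $\RR$ is flat over the field $K$, so the canonical map $\End_{K[\hol]}(U_1) \otimes_K \RR \to \End_{\RR[\hol]}(U_1 \otimes_K \RR)$ is an isomorphism. In particular $\dim_K L = \dim_\RR \KK$. I would then rule out $\KK = \HH$ using Standing Assumption~\ref{sta:Schur1}: if $U_1 \otimes_K \RR$ were of quaternionic type, an absolutely irreducible constituent $Y$ of $(U_1 \otimes_K \RR) \otimes_\RR \CC$ would be self-conjugate, so $\chi_Y$ would be real-valued and $\QQ(\chi_Y) \subseteq \RR$; but $Y$ is also an irreducible constituent of $V_1 \otimes \CC$ (as $U_1 \otimes_K \RR \leq V_1 \otimes \RR$ by Lemma~\ref{lem:Gal-premutes}), hence realizable over $\QQ(\chi_Y)$ by the Standing Assumption, hence over $\RR$, forcing $U_1 \otimes_K \RR$ to be of real type---a contradiction. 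Therefore $\KK \in \{\RR, \CC\}$ and $\dim_K L \leq 2$, so $L$ is either $K$ itself or a quadratic number field extension of $K$.

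Finally, for the Galois property, I would use the $\sigma$-semilinear $\hol$-equivariant twist bijections $t \mapsto t^\sigma$ from $T$ onto $T^\sigma$ introduced above: the formula $\alpha^\sigma(t^\sigma) = \alpha(t)^\sigma$ defines a $\sigma$-semilinear ring isomorphism $\End_{K[\hol]}(T) \to \End_{K[\hol]}(T^\sigma)$, which through the matrix descriptions (using the analogous twisted isomorphisms $\psi_j^\sigma \colon U_1^\sigma \to U_j^\sigma$) restricts entrywise to a $\sigma$-semilinear ring isomorphism from $L = \End_{K[\hol]}(U_1)$ onto $\End_{K[\hol]}(U_1^\sigma)$. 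By the dimension count of the previous paragraph this target algebra is again either $K$ or a quadratic number field over $K$, and in either case it admits a $K$-algebra embedding into $\KK$: if it equals $K$, via the inclusion $K \subseteq \RR \subseteq \KK$; if it is quadratic over $K$, then $\KK = \CC$ and one takes either extension of $\iota \colon K \hookrightarrow \RR \hookrightarrow \CC$ to this quadratic field. Letting $L^\sigma \subseteq \KK$ be the image of such an embedding and precomposing with the $\sigma$-semilinear isomorphism above gives a ring isomorphism $L \to L^\sigma$ restricting to $\sigma$ on $K$. I expect this last step---and in particular the fact that one must first eliminate $\KK = \HH$, so that every Galois twist of $L$ becomes a field of degree at most $2$ over $K$ with a place above $\iota$ realising it inside $\KK$---to be the only genuine obstacle, the remainder being routine manipulation with Schur's lemma and flat base change.
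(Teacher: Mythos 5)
Your proposal is correct, and it deviates from the paper's proof in two substantive and interesting ways. For the isomorphism $L \otimes_K \RR \to \KK$, you invoke flat base change for $\Hom$ out of a finitely presented module; the paper instead argues by hand that the solution set over $K$ of the linear system $\{gx = xg : g \in \hol\}$ is dense in the solution set over $\RR$ (and similarly for injectivity). Both are correct, but your route is cleaner and more conceptual. More notably, you observe that Standing Assumption~\ref{sta:Schur1} rules out $\KK = \HH$ outright: if $U_1 \otimes_K \RR$ were quaternionic then the absolutely irreducible constituent $Y$ would have real-valued character, so $\QQ(\chi_Y) \subseteq \RR$, and Schur index one over $\QQ$ would force $Y$ to be realizable over $\RR$, contradicting $\nu_2(Y) = -1$. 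The paper does not appear to use this; it treats $\KK = \HH$ as the ``most substantial'' case in Corollary~\ref{cor:Lsigma} and devotes explicit quaternion-algebra manipulations to it, which your observation shows is vacuous. With $\HH$ excluded, $L$ is commutative of degree $\le 2$ over $K$, and your construction of $L^\sigma$ (abstractly embedding $\End_{K[\hol]}(U_1^\sigma)$ into $\KK$ and precomposing with the $\sigma$-semilinear twist) is valid and shorter than the paper's, which instead picks an $\RR$-algebra isomorphism $\End_{\hol}(U_1^\sigma \otimes_K \RR) \cong \KK$ and defines $L^\sigma$ as the image of $\End_{\hol}(U_1^\sigma)$ under it.

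Two points you should tighten. First, when you write that ``by the dimension count of the previous paragraph'' the algebra $\End_{K[\hol]}(U_1^\sigma)$ is again $K$ or quadratic, the previous paragraph only computed $\dim_K \End_{K[\hol]}(U_1)$; you must run the same flat base change argument for $U_1^\sigma$ and note that $U_1^\sigma \otimes_K \RR$ is irreducible of the same type $\KK$ (both facts hold, by Krull--Schmidt applied to $V_1 \otimes \RR = \bigoplus_\sigma U_1^\sigma \otimes_K \RR$ and by Galois-conjugating the Frobenius--Schur indicator, but they need to be stated). Second, the asymmetry that the statement requires $L \otimes_K \RR \to \KK$ to be an isomorphism but imposes no such condition on $L^\sigma$ is real, and your chosen embedding of $\End_{K[\hol]}(U_1^\sigma)$ might well land inside $\RR$ even when $\KK = \CC$; this does not affect the Proposition, but if the downstream Corollary~\ref{cor:Lsigma} (which asserts one may take $L^\sigma = L$) is in view you would need to choose the embedding more carefully.
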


\begin{proof}
The isomorphism $\ctlr{\Aut_\RR(T \otimes_K \RR)} \cong \GL[m]{\KK}$ is induced, under some $\hol$-equivariant isomorphisms $U_1 \otimes_K \RR \to U_j \otimes_K \RR$, by the isomorphism $\End_{\hol}(U_1 \otimes_K \RR) \cong \KK$. Let $L \subseteq \KK$ correspond to $\End_{\hol}(U_1) \subseteq \End_{\hol}(U_1 \otimes_K \RR)$ under the latter isomorphism. Since, by construction, the $\hol$-equivariant isomorphisms $U_1 \otimes_K \RR \to U_j \otimes_K \RR$ are induced by $\hol$-equivariant isomorphisms $U_1 \to U_j$, it follows that $\ctlr{\Aut_K(T)} \cong \GL[m]{L}$.

Now it is clear that $L \cong \End_{\hol}(U_1)$ is stable under addition, multiplication (i.e.\ composition) and scalar multiplication by elements of $K$, implying that $L$ is an (associative) $K$-subalgebra of $\KK$. Moreover, if $\lambda \in L \setminus \{0\}$ then $\lambda \in \Aut_K(U_1)$ (as then $\lambda$ has a multiplicative inverse in $\KK$ and therefore $\lambda \in \Aut_{\RR}(U_1 \otimes_K \RR)$), and so $\lambda$ has a multiplicative inverse $\lambda^{-1} \in \Aut_K(U_1)$, which then clearly commutes with the action of $\hol$ and so belongs to $L$; this shows that $L$ is a division algebra over $K$. We first aim to show that the map $L \otimes_K \RR \to \KK$ is an isomorphism. Fix a $K$-basis $(e_1,\ldots,e_q)$ of $U_1$, so that $\End(U_1)$ can be identified with the ring $\Mat_q(K)$ of $q \times q$ matrices with entries in $K$.

To show that $L \otimes_K \RR \to \KK$ is surjective, we need to show that $L$ spans $\KK$ over $\RR$. Under the identification $\End(U_1 \otimes_K \RR) \cong \Mat_q(\RR)$ induced by $\End(U_1) \cong \Mat_q(K)$, the group $\hol$ is identified with a finite subgroup of $\GL[q]{K}$, and the system of equations $\{ g x = x g \mid g \in \hol \}$ over $\End(U_1 \otimes_K \RR)$ becomes a system of $\left|\hol\right| \cdot q^2$ homogeneous linear equations over $q^2$ variables with coefficients in $K$. If $S \leq \RR^{q^2}$ is the set of solutions for the latter system of equations, it then follows (from the fact that the equations are linear with coefficients in $K$) that $S \cap K^{q^2}$ is dense in $S$ and in particular spans $S$ over $\RR$. But $S$ and $S \cap K^{q^2}$ correspond to $\KK$ and $L$, respectively, implying that $L$ spans $\KK$ over $\RR$, as required.

To show that $L \otimes_K \RR \to \KK$ is injective, we need to show that if $\lambda_1,\ldots,\lambda_s \in L$ are linearly independent over $K$ then they are also linearly independent over $\RR$. Now linear independence over $K$ means that the equation $\sum_{l=1}^s x_l\lambda_l = 0$ has no non-zero solutions in $K^s$. But under the identification $\End(U_1) \cong \Mat_q(K)$, the latter equation becomes a system of $q^2$ homogeneous linear equations over $s$ variables with coefficients in $K$, implying that its solution set in $K^s$ is dense inside its solution set in $\RR^s$. Since the equation has no non-zero solutions in $K^s$ it then follows that it has no non-zero solutions in $\RR^s$, and so $\lambda_1,\ldots,\lambda_s \in L$ are linearly independent over $\RR$, as required.

Finally, let $\sigma \in \Gal(K/\QQ)$; we aim to find a $K$-subalgebra $L^\sigma \subseteq \KK$ such that the automorphism of $K$ given by $k \mapsto k^\sigma$ extends to a ring isomorphism $L \cong L^\sigma$. In order to do that, note that given $\alpha \in \End(U_1)$, the map $\alpha$ commutes with the action of $\hol$ if and only if so does $\alpha^\sigma \in \End(U_1^\sigma)$, implying that $\End_{\hol}(U_1^\sigma) = \{ \alpha^\sigma \mid \alpha \in \End_{\hol}(U_1) \}$. Now by the argument above, the map $\End_{\hol}(U_1^\sigma) \otimes_K \RR \to \End_{\hol}(U_1^\sigma \otimes_K \RR)$ is an isomorphism, implying that
\[
\dim_{\RR} \End_{\hol}(U_1^\sigma \otimes_K \RR) = \dim_K \End_{\hol}(U_1^\sigma) = \dim_K L = \dim_{\RR} \KK,
\]
and therefore $\KK \cong \End_{\hol}(U_1^\sigma \otimes_K \RR)$. Let $L^\sigma \subseteq \KK$ correspond to $\End_{\hol}(U_1^\sigma)$ under the latter isomorphism; it follows from the previous argument that $L^\sigma$ is an associative division algebra over $K$.

It remains to show that the map $L \to L^\sigma$ given by $\alpha \to \alpha^\sigma$, which is clearly a ring isomorphism, restricts to the map $k \mapsto k^\sigma$ on $K$. But by construction, an element $k \in K$ corresponds, as an element of $L$, to the scalar multiplication $\alpha_k\colon U_1 \to U_1$ sending $u \mapsto ku$. We therefore have $\alpha_k^\sigma(u^\sigma) = \alpha_k(u)^\sigma = (ku)^\sigma = k^\sigma \cdot u^\sigma$ for all $u \in U_1$, and therefore $\alpha_k^\sigma$ corresponds to the element $k^\sigma \in K \subseteq L^\sigma$, as required.
\end{proof}

In the rest of this subsection, we show that the division algebras $L^\sigma$ (for $\sigma \in \Gal(K/\QQ)$) appearing in Proposition~\ref{prop:AutU-ncryst} can be taken to be equal to $L$, and thus we have a $\Gal(K/\QQ)$-action on $L$ by ring automorphisms.  Note that if $\KK = \RR$ then we automatically have $L = K = L^\sigma$, whereas if $\KK = \CC$ then we can show that $L = L^\sigma$ using an easier argument.  The remainder of this section therefore has the most substance in the case $\KK = \HH$.  Note that in that case the construction does not automatically yield $L = L^\sigma$ (because of the existence of a continuum of $\RR$-linear automorphisms $\KK \to \KK$, unlike in the cases $\KK = \RR$ and $\KK = \CC$), but can be modified so that we indeed have $L = L^\sigma$.

\begin{lem} \label{lem:k2}
In the setting of Proposition~\ref{prop:AutU-ncryst}, let $\lambda \in L$ be such that $\kappa \coloneqq \lambda^2 \in K \cap (-\infty,0)$. Then $\frac{\kappa^\sigma}{\kappa} = k^2$ for some $k \in K$.
\end{lem}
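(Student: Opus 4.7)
The plan is to first reduce to the case $\KK = \CC$, and then use the character-theoretic structure of $L$ to derive the square relation. The case $\KK = \RR$ is impossible: it would force $L = K \subseteq \RR$, so $\lambda^2 \geq 0$, contradicting $\kappa < 0$. The case $\KK = \HH$ is excluded by the Standing Assumption~\ref{sta:Schur1}: in quaternionic type the complex irreducible constituent $W$ of $V_j \otimes \CC$ is not realizable over $\RR$, hence not realizable over its character field $\QQ(\psi) \subseteq \RR$, giving Schur index at least two over $\QQ$ and contradicting the hypothesis.

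In the remaining case $\KK = \CC$, the algebra $L$ is a quadratic field extension of $K$, so I would write $L = K(\mu)$ with $\mu^2 = -d$, where $d \in K$ and $d > 0$ in the fixed embedding $K \hookrightarrow \RR$ (as required by $L \otimes_K \RR \cong \CC$).  Expanding $\lambda = a + b\mu$ shows that $\lambda^2 \in K$ forces $ab = 0$, and the condition $\lambda^2 < 0$ then forces $a = 0$ and $b \in K^\times$, giving $\lambda = b\mu$ and $\kappa = -b^2 d$.  Applying the ring isomorphism $\phi_\sigma\colon L \to L^\sigma$ of Proposition~\ref{prop:AutU-ncryst}, one likewise sees that $L^\sigma$ is a quadratic field extension isomorphic to $K[x]/(x^2+\sigma(d))$ as a $K$-algebra, forcing also $\sigma(d) > 0$.

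The key step is to show that $L \cong L^\sigma$ as $K$-algebras---note that $\phi_\sigma$ is only $\sigma$-semilinear, so this does not follow formally---and this is where the Schur-index-one hypothesis is essential. Since $\psi$ (the character of $W$) has Schur index one, the base change $U_1 \otimes_K \widehat K$ (for $\widehat K = \QQ(\psi)$) decomposes as a sum of two non-isomorphic irreducible $\widehat K$-representations realizing $\psi$ and $\bar\psi$; by Schur's lemma this gives $L \otimes_K \widehat K \cong \widehat K \oplus \widehat K$, and projecting onto either factor produces a $K$-algebra embedding $L \hookrightarrow \widehat K$ which is an isomorphism by dimensions. The identical argument applied to $U_1^\sigma$ yields $L^\sigma \cong \QQ(\psi^\sigma)$ as $K$-algebras. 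Since character fields of finite groups are abelian number fields, $\widehat K$ is Galois over $\QQ$, so any lift $\tilde\sigma \in \Gal(\widehat K/\QQ)$ of $\sigma$ satisfies $\QQ(\psi^\sigma) = \tilde\sigma(\widehat K) = \widehat K$, whence $L^\sigma \cong \widehat K \cong L$ as $K$-algebras.

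Combining $L \cong L^\sigma$ with the explicit presentations $L \cong K[x]/(x^2+d)$ and $L^\sigma \cong K[x]/(x^2+\sigma(d))$, the classification of quadratic extensions by $K^\times/(K^\times)^2$ yields $\sigma(d)/d = c^2$ for some $c \in K^\times$. Then
\[
\frac{\kappa^\sigma}{\kappa} = \frac{-\sigma(b)^2 \sigma(d)}{-b^2 d} = \left(\frac{\sigma(b)}{b}\right)^2 \cdot \frac{\sigma(d)}{d} = \left(\frac{\sigma(b)\, c}{b}\right)^2,
\]
finishing the proof. The main obstacle is establishing the $K$-algebra isomorphism $L \cong L^\sigma$: the map $\phi_\sigma$ is only $\sigma$-semilinear, and this is where the character-theoretic detour through the Galois abelian field $\widehat K$ is needed.
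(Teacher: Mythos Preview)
Your proof is correct, and it takes a genuinely different route from the paper's.

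Your opening observation---that the Standing Assumption actually excludes $\KK=\HH$, because a real irreducible of quaternionic type has a self-conjugate complex constituent whose character field lies in $\RR$ while the constituent itself is not realizable over $\RR$, forcing Schur index $\geq 2$---is entirely valid. The paper does not notice this: it remarks just before the lemma that the argument ``has the most substance in the case $\KK=\HH$'', a case which by your observation is vacuous. Your reduction to $\KK=\CC$ is therefore a legitimate simplification.

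For $\KK=\CC$, your approach is to identify $L$ with the complex character field $\widehat K=\QQ(\psi)$ via the splitting $L\otimes_K\widehat K\cong\widehat K\oplus\widehat K$ coming from Schur index one, and then to use that $\widehat K/\QQ$ is abelian (hence Galois-stable) to conclude $L^\sigma\cong\widehat K$ as well; the square relation $\sigma(d)/d\in(K^\times)^2$ then drops out of Kummer theory. The paper's proof is instead uniform in $\KK$: it analyzes the eigenspaces of the order-$4$ operator $\lambda/\sqrt{-\kappa}$ on $U_1\otimes_K\CC$, invokes Brauer's theorem to place $\sqrt{\kappa}=\mathbf{i}\sqrt{-\kappa}$ inside the cyclotomic field $\QQ(e^{2\pi\mathbf{i}/n})$, extends $\sigma$ over the abelian extension $K(\sqrt{\kappa})$, and reads off $k=\sqrt{\kappa}^{\sigma'}/\sqrt{\kappa}\in K$ directly.

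Both arguments ultimately rest on the same phenomenon: the relevant quadratic extension of $K$ sits inside an abelian number field, so $\sigma$ lifts. Your route is cleaner under the Standing Assumption; the paper's argument is independent of it and would survive if that assumption were dropped.
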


\begin{proof}
Let $\kappa' = \sqrt{-\kappa} \in \RR$, so that the endomorphism $\lambda' \coloneqq \frac{1}{\kappa'} \lambda \in \End_{\hol}(U_1 \otimes_K \RR)$ has order $4$. Thus $\lambda'$ is diagonalisable over $\CC$ and has eigenvalues $\pm 1$ and $\pm \mathbf{i}$, with at least one eigenvalue in $\{\pm\mathbf{i}\}$; but as $\hol$ fixes the eigenspaces of $\lambda'$ and the $\hol$-representation $U_1 \otimes_K \RR$ is irreducible, it follows that the only eigenvalues of $\lambda'$ are $\pm\mathbf{i}$. This implies that we have a splitting $U_1 \otimes_K \CC = U'_+ \oplus U'_-$, where $U'_\pm$ is the $\pm\mathbf{i}$-eigenspace of $\lambda'$, and therefore the $\pm\mathbf{i}\kappa'$-eigenspace of~$\lambda$.

Now by Brauer's Theorem \cite[Theorem~10.3]{Isaacs}, the $\hol$-representation splits completely over the cyclotomic field $L'_n \coloneqq \QQ(e^{2\pi\mathbf{i}/n})$, where $n$ is the exponent of $\hol$, and by our construction we have $K \subseteq L'_n$. We thus have $U_1 \otimes_K L_n' = U''_+ \oplus U''_-$, where $U''_\pm$ are $\hol$-representations irreducible over $\CC$ and satisfying $U''_\pm \otimes_{L'_n} \CC = U'_\pm$. Given a non-zero element $u \in U_1$, we can then write $u = u_+ + u_-$ with $u_\pm \in U''_\pm$, implying that $\mathbf{i}\kappa'(u_+-u_-) = \lambda(u_+)+\lambda(u_-) = \lambda(u) \in U_1$ since $\lambda \in \End(U_1)$; but we also have $0 \neq u_+-u_- \in U_1 \otimes_K L'_n$, implying that $\mathbf{i}\kappa' \in L_n'$. Therefore, the quadratic extension $L' \coloneqq K(\mathbf{i}\kappa')$ of $K$ is contained in $L'_n$.

Now note that the extension $L_n'/\QQ$ is Galois with the Galois group $\Gal(L'_n/\QQ)$ abelian, implying that all subgroups of $\Gal(L_n'/\QQ)$ are normal. Hence the extension $L'/\QQ$ is Galois as well; this implies that $\sigma \in \Gal(K/\QQ)$ extends to an automorphism $\sigma' \in \Gal(L'/\QQ)$. Let $k \coloneqq \frac{(\mathbf{i}\kappa')^{\sigma'}}{\mathbf{i}\kappa'} \in L'$, and note that $k^2 = \frac{\kappa^\sigma}{\kappa}$ by the definition of $\kappa'$. Note also that $(\mathbf{i}\kappa')^{\sigma'} \notin \RR$ since $\mathbf{i}\kappa' \notin \RR$ and the field $K = L' \cap \RR$ is fixed by $\sigma'$, and thus $(\mathbf{i}\kappa')^{\sigma'} \in \mathbf{i}\RR$ since $[(\mathbf{i}\kappa')^{\sigma'}]^2 = \kappa^\sigma \in \RR$. This implies that $k \in \RR$, and therefore $k \in K = L' \cap \RR$, as required.
\end{proof}

\begin{cor} \label{cor:Lsigma}
In the setting of Proposition~\ref{prop:AutU-ncryst}, we may take $L^\sigma=L$.
\end{cor}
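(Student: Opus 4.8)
The plan is to treat the three cases $\KK = \RR$, $\KK = \CC$, $\KK = \HH$ separately, the first two being immediate and the third carrying essentially all the content. If $\KK = \RR$, then $\dim_K L = \dim_\RR\KK = 1$, so $L = K$, and likewise $L^\sigma = K$, giving $L^\sigma = L$. If $\KK = \CC$, then $L$ is a $2$-dimensional division algebra over $K$ with $L \otimes_K \RR \cong \CC$, hence a quadratic field extension of $K$ not contained in $\RR$; thus $L = K \oplus K\lambda$ for some $\lambda \in \CC$ with $\kappa := \lambda^2 \in K$ and $\kappa < 0$ under the embedding $K \subseteq \RR$. Under the isomorphism $\End_\hol(U_1^\sigma \otimes_K \RR) \cong \CC$ used to define $L^\sigma$, the image $\lambda'$ of $\lambda^\sigma$ satisfies $(\lambda')^2 = \kappa^\sigma$; by Lemma~\ref{lem:k2} we have $\kappa^\sigma = k^2\kappa$ for some $k \in K$, so $\kappa^\sigma < 0$ and $\lambda'$, $k\lambda$ are purely imaginary complex numbers with the same square, whence $\lambda' = \pm k\lambda \in K\lambda$. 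Since $\{1,\lambda'\}$ is a $K$-basis of $L^\sigma$, we get $L^\sigma = K \oplus K\lambda' = K \oplus K\lambda = L$, with no adjustment needed.

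The substantive case is $\KK = \HH$. Here $L$ is a quaternion algebra over $K$ (a $4$-dimensional division algebra with $L \otimes_K \RR \cong \HH$ has center $K$), so I would fix a standard basis $1,\lambda,\mu,\lambda\mu$ of $L$ over $K$ with $\lambda^2 = a \in K$, $\mu^2 = b \in K$, $\lambda\mu = -\mu\lambda$; since $K(\lambda)\otimes_K\RR$ and $K(\mu)\otimes_K\RR$ embed into the division ring $\HH$, necessarily $a,b<0$. Transporting through the construction of $L^\sigma$ and invoking Lemma~\ref{lem:k2}, the images $\lambda',\mu' \in \HH$ of $\lambda^\sigma,\mu^\sigma$ anticommute and satisfy $(\lambda')^2 = a^\sigma = k_1^2 a$, $(\mu')^2 = b^\sigma = k_2^2 b$ for some $k_1,k_2 \in K$; in particular $\lambda',\mu'$ are purely imaginary quaternions, they are mutually orthogonal (anticommuting purely imaginary quaternions always are), and they have norms $|k_1|\,|\lambda|$ and $|k_2|\,|\mu|$. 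As $\lambda,\mu$ are likewise orthogonal purely imaginary quaternions, there is a rotation of $\operatorname{Im}(\HH) \cong \RR^3$ sending the orthonormal pair $(\lambda'/|\lambda'|,\mu'/|\mu'|)$ to $(\lambda/|\lambda|,\mu/|\mu|)$; such a rotation has the form $x \mapsto qxq^{-1}$ for a unit quaternion $q$, and comparing squares (which are real, hence fixed by conjugation) shows $q\lambda'q^{-1} = \pm k_1\lambda \in K\lambda$ and $q\mu'q^{-1} = \pm k_2\mu \in K\mu$, so $q L^\sigma q^{-1} = K \oplus K\lambda \oplus K\mu \oplus K\lambda\mu = L$. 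I would then replace the isomorphism $\End_\hol(U_1^\sigma \otimes_K \RR) \cong \HH$ underlying $L^\sigma$ by its composite with conjugation by $q$: this is again an $\RR$-algebra isomorphism, it fixes $K \subseteq \RR$ pointwise (so the induced ring isomorphism $L \cong L^\sigma$ still restricts to $\sigma$ on $K$, and the isomorphism $L \otimes_K \RR \cong \HH$ of Proposition~\ref{prop:AutU-ncryst} is unaffected), and under it the new $L^\sigma$ equals $L$, as required.

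The main obstacle is precisely this last case: unlike for $\RR$ and $\CC$, the isomorphism $\End_\hol(U_1^\sigma \otimes_K \RR) \cong \HH$ is determined only up to the continuum $\Aut_\RR(\HH) \cong \mathrm{SO}(3)$, so $L^\sigma$ is pinned down only up to conjugacy in $\HH$ and one must exhibit an explicit conjugator. The two ingredients that make this work are the elementary fact that anticommuting purely imaginary quaternions are orthogonal --- which realizes the required change of generators as an honest rotation of $\operatorname{Im}(\HH)$, hence as an inner automorphism of $\HH$ --- and Lemma~\ref{lem:k2}, which is exactly what forces the rescaling factors $k_1,k_2$ to lie in $K$ rather than merely in $\RR$, so that $q L^\sigma q^{-1}$ is literally $L$ and not just a $K$-isomorphic copy of it.
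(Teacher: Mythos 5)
Your proof is correct and takes essentially the same approach as the paper: both treat $\RR$ and $\CC$ quickly, then for $\HH$ choose an anticommuting trace-zero pair, use Lemma~\ref{lem:k2} to control the rescaling of their squares over $K$, and realign the two copies of $L$ inside $\HH$. The only cosmetic difference is that the paper re-chooses both embeddings $\phi\colon L\to\KK$ and $\phi^\sigma\colon L^\sigma\to\KK$ so the chosen bases land on the standard axes $c_i\mathbf{e}_i$, whereas you keep the embedding of $L$ fixed and post-compose the one for $L^\sigma$ with an explicit inner automorphism $x\mapsto qxq^{-1}$; these amount to the same thing.
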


\begin{proof}
Consider the $K$-linear map $\Tr\colon L \to K$, where $\Tr(\lambda)$ is the trace of the $K$-linear map $L \to L$ sending $\mu \mapsto \lambda\mu$. Let $L_0 = \ker(\Tr)$: in particular, $L_0$ is a subspace of the $K$-vector space $L$ of codimension one. Using the facts that $L \subseteq \KK \in \{ \RR, \CC, \HH \}$ and $L \cap \RR = K$, one may verify that $L_0$ coincides with the set of elements $\lambda \in L$ such that $\lambda^2 \in K \cap (-\infty,0]$.

Now if $\KK = \RR$, then we have $L = K = L^\sigma$ and there's nothing to show. If $\KK = \CC$, then $\dim_K L_0 = 1$ implying that there exists a non-zero element $\lambda \in L_0$, and $(1,\lambda)$ is an $\RR$-basis for~$\KK$. By Lemma~\ref{lem:k2}, there exists $k \in K$ such that $\left( \frac{\lambda^\sigma}{\lambda} \right)^2 = \frac{(\lambda^2)^\sigma}{\lambda^2} = k^2$ and therefore $\lambda^\sigma = \pm k\lambda$ (note that we are working over $\KK = \CC$ here), implying that $L^\sigma = K \oplus \lambda^\sigma K = K \oplus \lambda K = L$, as required.

Suppose now that $\KK = \HH$; we aim to find injective $K$-algebra homomorphisms $\phi\colon L \to \KK$ and $\phi^\sigma\colon L^\sigma \to \KK$ such that the induced maps $L \otimes_K \RR \to \KK$ and $L^\sigma \otimes_K \RR \to \KK$ are isomorphisms and such that $\phi(L) = \phi^\sigma(L^\sigma)$. Pick a non-zero element $\lambda_1 \in L_0$. Note that for any $\lambda \in L_0$ we have $\lambda\lambda_1 + \lambda_1\lambda = (\lambda+\lambda_1)^2-\lambda^2-\lambda_1^2 \in K$, giving a $K$-linear map $\chi\colon L_0 \to K$ sending $\lambda \mapsto \lambda\lambda_1+\lambda_1\lambda$; since $\dim_K L_0 = 3$ it follows that $\chi$ is not injective, so we may pick a non-zero element $\lambda_2 \in \ker(\chi)$. Let $\lambda_3 = \lambda_1\lambda_2$; using the fact that $\lambda_1^2,\lambda_2^2 \in K \cap (-\infty,0)$ and $\lambda_1\lambda_2+\lambda_2\lambda_1 = 0$, we may check that $\lambda_3 \in L_0$ and $\chi(\lambda_3) = 0$, which then implies that the tuple $(1,\lambda_1,\lambda_2,\lambda_3)$ is $K$-linearly independent and so a $K$-basis for $L$; similarly, $(1,\lambda_1^\sigma,\lambda_2^\sigma,\lambda_3^\sigma)$ is a $K$-basis for $L^\sigma$.

Now for $i \in \{1,2,3\}$, let $c_i = \sqrt{-{\lambda_i}^2} \in \RR$ and $c_i^\sigma = \sqrt{-(\lambda_i^\sigma)^2} \in \RR$. One can then verify that the assignments $\phi(\lambda_i) = c_i\mathbf{e}_i$ and $\phi^\sigma(\lambda_i^\sigma) = c_i^\sigma\mathbf{e}_i$, where $(\mathbf{e}_1,\mathbf{e}_2,\mathbf{e}_3) = (\mathbf{i},\mathbf{j},\mathbf{k})$, extend to injective $K$-algebra homomorphisms $\phi\colon L \to \KK$ and $\phi^\sigma\colon L^\sigma \to \KK$ such that the induced maps $L \otimes_K \RR \to \KK$ and $L^\sigma \otimes_K \RR \to \KK$ are isomorphisms. Furthermore, it follows from Lemma~\ref{lem:k2} that $(\lambda_i^\sigma)^2 = k_i^2 \lambda_i^2$ and so $c_i^\sigma = \pm k_ic_i$ for some $k_i \in K$, implying that $\phi(\lambda_i K) = \phi^\sigma(\lambda_i^\sigma K)$ and therefore $\phi(L) = \phi^\sigma(L^\sigma)$, as required.
\end{proof}

\subsection{The non-crystallographic case: lattices} \label{ssec:commens-ncryst-lat}

We now describe the group $\ctlr{\Aut(\lat \cap W)}$ using Lemma~\ref{lem:ctlrK} and the description of $\ctlr{\Aut_K(T)}$ in the previous subsection.

\begin{lem} \label{lem:ncryst-commens}
Let $\Lambda$ be a complete $\mathfrak{o}_K$-lattice in $T$ and let $\Lambda' = \{ \sum_{\sigma \in \Gal(K/\QQ)} t^\sigma \mid t \in \Lambda \}$. Then $\ctlr{\Aut(\lat \cap W)}$ shares a finite-index subgroup with $\{ \widetilde\alpha|_{\Lambda'} \mid \alpha \in \ctlr{\Aut_K(\Lambda)} \}$.
\end{lem}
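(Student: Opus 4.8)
The plan is to show that $\{ \widetilde\alpha|_{\Lambda'} \mid \alpha \in \ctlr{\Aut_K(\Lambda)} \}$ is \emph{equal} to $\ctlr{\Aut(W)} \cap \Aut(\Lambda')$, and then to deduce the statement from the fact that $\Lambda'$ and $\lat \cap W$ are both lattices in the $\QQ$-vector space $W$, together with Lemmas~\ref{lem:ctlrK} and~\ref{lem:Aut-lat-fi}. To set things up, I would consider the additive map $p\colon T \to W$ given by $p(t) = \sum_{\sigma \in \Gal(K/\QQ)} t^\sigma$. Since the decomposition $W \otimes K = \bigoplus_{\sigma} T^\sigma$ is direct and $t^{\Id} = t$ for $t \in T$, the map $p$ is injective; as $W = p(T)$ by the definition of $W$ and $p$ is $\QQ$-linear, $p$ is a $\QQ$-linear isomorphism from $T$ onto $W$. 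In particular, since a complete $\mathfrak{o}_K$-lattice $\Lambda$ in $T$ is in particular a lattice in $T$ viewed as a $\QQ$-vector space, its image $\Lambda' = p(\Lambda)$ is a lattice in $W$.

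Next I would record the compatibility $\widetilde\alpha \circ p = p \circ \alpha$ for every $\alpha \in \Aut_K(T)$, which is immediate from $\widetilde\alpha(t^\sigma) = \alpha(t)^\sigma$. Consequently $\widetilde\alpha(\Lambda') = \widetilde\alpha(p(\Lambda)) = p(\alpha(\Lambda))$, and by the injectivity of $p$ this coincides with $\Lambda' = p(\Lambda)$ if and only if $\alpha(\Lambda) = \Lambda$; thus $\widetilde\alpha|_W$ preserves $\Lambda'$ precisely when $\alpha$ preserves $\Lambda$. Combining this with Lemma~\ref{lem:ctlrK}, which identifies $\ctlr{\Aut(W)}$ with $\{ \widetilde\alpha|_W \mid \alpha \in \ctlr{\Aut_K(T)} \}$, gives
\[
\{ \widetilde\alpha|_{\Lambda'} \mid \alpha \in \ctlr{\Aut_K(\Lambda)} \} = \ctlr{\Aut(W)} \cap \Aut(\Lambda'),
\]
where I identify $\widetilde\alpha|_{\Lambda'}$ with $\widetilde\alpha|_W$ (legitimate since $\Lambda'$ spans $W$ over $\QQ$): the inclusion ``$\subseteq$'' is clear, and for ``$\supseteq$'' any $\beta \in \ctlr{\Aut(W)} \cap \Aut(\Lambda')$ equals $\widetilde\alpha|_W$ for some $\alpha \in \ctlr{\Aut_K(T)}$ by Lemma~\ref{lem:ctlrK}, and then $\alpha(\Lambda) = \Lambda$ by the previous sentence.

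Finally, as $\lat \cap W$ is also a lattice in $W$, Lemma~\ref{lem:Aut-lat-fi} yields a subgroup $H \leq \Aut(W)$ which has finite index in both $\Aut(\Lambda')$ and $\Aut(\lat \cap W)$. Intersecting with the centralizer, the subgroup $\ctlr{\Aut(W)} \cap H$ then has finite index in $\ctlr{\Aut(W)} \cap \Aut(\Lambda') = \{ \widetilde\alpha|_{\Lambda'} \mid \alpha \in \ctlr{\Aut_K(\Lambda)} \}$ and in $\ctlr{\Aut(W)} \cap \Aut(\lat \cap W) = \ctlr{\Aut(\lat \cap W)}$, which proves the lemma. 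I do not expect any genuine obstacle here: the argument is essentially bookkeeping, and the only mildly delicate point is the tightness of the dictionary $\alpha \leftrightarrow \widetilde\alpha|_W$ between lattice-preserving automorphisms, which rests entirely on the injectivity of $p$.
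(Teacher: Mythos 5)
Your proposal is correct and follows essentially the same route as the paper: both establish that $\Lambda'$ is a $\ZZ$-lattice in $W$ and that $\{ \widetilde\alpha|_{\Lambda'} \mid \alpha \in \ctlr{\Aut_K(\Lambda)} \} = \ctlr{\Aut(W)} \cap \Aut(\Lambda')$, then conclude via Lemma~\ref{lem:Aut-lat-fi}. The one presentational difference is that you make the $\QQ$-linear isomorphism $p\colon T \to W$ explicit and derive both claims from it, whereas the paper proves the lattice property by a direct spanning-and-finite-generation argument and handles the injectivity of $t \mapsto \sum_\sigma t^\sigma$ implicitly in the step ``$\sum_\sigma \alpha^{\pm 1}(t)^\sigma \in \Lambda'$ implies $\alpha^{\pm 1}(t) \in \Lambda$''; your reorganization is a mild streamlining rather than a different approach.
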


\begin{proof}
By definition, $\ctlr{\Aut(\lat \cap W)} = \ctlr{\Aut(W)} \cap \Aut(\lat \cap W)$. On the other hand, we claim that $\Lambda'$ is a lattice in $W$ and that $\{ \widetilde\alpha|_{\Lambda'} \mid \alpha \in \ctlr{\Aut_K(\Lambda)} \} = \ctlr{\Aut(W)} \cap \Aut(\Lambda')$. In that case, $\Aut(\lat \cap W)$ and $\Aut(\Lambda')$ share a finite-index subgroup by Lemma~\ref{lem:Aut-lat-fi}, implying the result.

To see that $\Lambda'$ is a lattice (i.e.\ a complete $\ZZ$-lattice in $W$), note that every $x \in K$ can be expressed as $x=a/b$ for some $a \in \mathfrak{o}_K$ and $b \in \ZZ \setminus \{0\}$. Thus any $x' \in T$ is equal to $a'/b$ for some $a' \in \Lambda$ and $b \in \ZZ \setminus \{0\}$, implying that $W = \{ \sum_{\sigma \in \Gal(K/\QQ)} t^\sigma \mid t \in T \}$ is spanned by $\Lambda'$ over~$\QQ$. Since $\Lambda$ is finitely generated over $\mathfrak{o}_K$ and $\mathfrak{o}_K$ is finitely generated over $\ZZ$, it also follows that $\Lambda'$ is finitely generated as an abelian group; hence $\Lambda'$ is a lattice, as required.

To see that $\{ \widetilde\alpha|_{\Lambda'} \mid \alpha \in \ctlr{\Aut_K(\Lambda)} \} = \ctlr{\Aut(W)} \cap \Aut(\Lambda')$, note that the inclusion $\subseteq$ follows from Lemma~\ref{lem:ctlrK} and the definition of $\widetilde\alpha$. Conversely, suppose $\widetilde\alpha$ fixes $\Lambda'$ for some $\alpha \in \ctlr{\Aut_K(T)}$. Then $\sum_{\sigma \in \Gal(K/\QQ)} \alpha^{\pm 1}(t)^\sigma = \widetilde\alpha^{\pm 1}(\sum_{\sigma \in \Gal(K/\QQ)} t^\sigma) \in \widetilde\alpha^{\pm 1}(\Lambda') = \Lambda'$ for all $t \in \Lambda$, implying that $\alpha^{\pm 1}(t) \in \Lambda$. We thus have $\alpha \in \Aut_K(\Lambda)$, as required.
\end{proof}

\begin{thm} \label{thm:commens-1factor}
Suppose $U_1 \otimes_K \RR$ (and so each $U_j \otimes_K \RR$) is an irreducible $\RR$-representation of type $\KK \in \{ \RR, \CC, \HH \}$. Then, under the isomorphism $\ctlr{\Aut_\RR(W \otimes \RR)} \cong \GL[m]{\KK}^{[K:\QQ]}$, a finite-index subgroup of $\ctlr{\Aut(\lat \cap W)}$ is identified with a finite-index subgroup of the group
\[
\{ (A^\sigma \mid \sigma \in \Gal(K/\QQ)) \mid A \in \GL[m]{\mathfrak{o}} \},
\]
where $\mathfrak{o}$ is an $\mathfrak{o}_K$-order in a division $K$-subalgebra $L \subseteq \KK$ satisfying the following properties:
\begin{itemize}
    \item the canonical map $L \otimes_K \RR \to \KK$ is an isomorphism,
    \item the $\Gal(K/\QQ)$-action on $K$ extends to an action on $L$ by ring automorphisms, and
    \item the $\Gal(K/\QQ)$-action on $L$ restricts to an action on $\mathfrak{o}$.
\end{itemize}
\end{thm}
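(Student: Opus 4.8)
The plan is to assemble the ingredients of Sections~\ref{ssec:commens-ncryst-fields} and~\ref{ssec:commens-ncryst-divalg}. By Lemma~\ref{lem:ctlrK} we have $\ctlr{\Aut(W)} = \{\widetilde\alpha|_W \mid \alpha \in \ctlr{\Aut_K(T)}\}$, and by Proposition~\ref{prop:AutU-ncryst} together with Corollary~\ref{cor:Lsigma} (Standing Assumption~\ref{sta:Schur1} being in force) there is an isomorphism $\ctlr{\Aut_K(T)} \cong \GL[m]{L}$ for a division $K$-subalgebra $L \subseteq \KK$ with $L \otimes_K \RR \cong \KK$ and a $\Gal(K/\QQ)$-action by ring automorphisms; this isomorphism is induced by a fixed family of $\hol$-equivariant isomorphisms $\psi_j\colon U_1 \to U_j$ and by $\End_{\hol}(U_1) \cong L$. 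The lattice $\lat \cap W$ is then handled via Lemma~\ref{lem:ncryst-commens}.

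First I would build the product decomposition. Extending $W \otimes K = \bigoplus_{\sigma} T^\sigma$ along $K \hookrightarrow \RR$ gives an $\hol$-equivariant splitting $W \otimes \RR = \bigoplus_{\sigma \in \Gal(K/\QQ)} (T^\sigma \otimes_K \RR)$ whose summands are the isotypic components: each is $m$ copies of $U_1^\sigma \otimes_K \RR$, of type $\KK$ (the type being the same for all $\sigma$ by the argument in Proposition~\ref{prop:AutU-ncryst}), and distinct $\sigma,\tau$ give non-isomorphic irreducibles since the characters $\chi^\sigma,\chi^\tau$ are distinct (as $K = \QQ(\chi)$ by Lemma~\ref{lem:Gal-premutes}). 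By Schur's lemma this yields $\ctlr{\Aut_\RR(W \otimes \RR)} \cong \prod_{\sigma} \ctlr{\Aut_\RR(T^\sigma \otimes_K \RR)} \cong \GL[m]{\KK}^{[K:\QQ]}$, the $\sigma$-factor being identified by applying Proposition~\ref{prop:AutU-ncryst} to $U_1^\sigma$ in place of $U_1$. Now if $\alpha \in \ctlr{\Aut_K(T)}$ corresponds to $A \in \GL[m]{L}$, then $\widetilde\alpha$ restricts on $T^\sigma$ to $\alpha^\sigma$, which---reading it off via the $\sigma$-twists $\psi_j^\sigma$ and the identification $L^\sigma = L$ of Corollary~\ref{cor:Lsigma}---corresponds to $A^\sigma \in \GL[m]{L}$. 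Passing to $\RR$-coefficients, $\ctlr{\Aut(W)}$ is thus identified with the ``diagonal'' subgroup $\{(A^\sigma \mid \sigma \in \Gal(K/\QQ)) \mid A \in \GL[m]{L}\}$ of $\GL[m]{\KK}^{[K:\QQ]}$.

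Next I would incorporate the lattice. Fix a complete $\mathfrak{o}_K$-lattice $\Lambda_1 \subseteq U_1$ and set $\Lambda = \bigoplus_{j=1}^m \psi_j(\Lambda_1)$, a complete $\mathfrak{o}_K$-lattice in $T$. As in the proof of Lemma~\ref{lem:crystallographic}, $\ctlr{\Aut_K(\Lambda)}$ corresponds under $\ctlr{\Aut_K(T)} \cong \GL[m]{L}$ exactly to $\GL[m]{\mathfrak{o}'}$, where $\mathfrak{o}' = \End_{\hol}(\Lambda_1)$ is an $\mathfrak{o}_K$-order in $L$; since the map $\alpha \mapsto \widetilde\alpha|_{\Lambda'}$ is injective, Lemma~\ref{lem:ncryst-commens} gives that $\ctlr{\Aut(\lat \cap W)}$ shares a finite-index subgroup with $\{(A^\sigma \mid \sigma) \mid A \in \GL[m]{\mathfrak{o}'}\}$. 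To make the order $\Gal(K/\QQ)$-stable, replace $\mathfrak{o}'$ by $\mathfrak{o} \coloneqq \bigcap_{\sigma \in \Gal(K/\QQ)} (\mathfrak{o}')^\sigma$: each $(\mathfrak{o}')^\sigma$ is an $\mathfrak{o}_K$-order (since $\sigma$ acts on $L$ by a ring automorphism with $\sigma(\mathfrak{o}_K) = \mathfrak{o}_K$) and the $(\mathfrak{o}')^\sigma$ are pairwise commensurable, so $\mathfrak{o}$ is an $\mathfrak{o}_K$-order of finite index in $\mathfrak{o}'$, visibly $\Gal(K/\QQ)$-stable. Then $\GL[m]{\mathfrak{o}}$---being the stabilizer in $\GL[m]{\mathfrak{o}'}$ of the finite-index $\mathfrak{o}_K$-sublattice $\mathfrak{o}^m \subseteq (\mathfrak{o}')^m$---has finite index in $\GL[m]{\mathfrak{o}'}$, which gives the claim; the three bullet points are exactly the properties supplied by Proposition~\ref{prop:AutU-ncryst}, Corollary~\ref{cor:Lsigma}, and the construction of $\mathfrak{o}$.

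The step I expect to require the most care is the identification of $\ctlr{\Aut(W)}$ with the diagonal subgroup in the second paragraph: one must choose the isomorphisms $\ctlr{\Aut_\RR(T^\sigma \otimes_K \RR)} \cong \GL[m]{\KK}$ for the various $\sigma$ in a mutually compatible, i.e.\ $\Gal(K/\QQ)$-equivariant, way---through the twists $\psi_j^\sigma$ of one fixed family and through embeddings $L, L^\sigma \hookrightarrow \KK$ matching the identification $L = L^\sigma$ from Corollary~\ref{cor:Lsigma}---so that the $\sigma$-component of $\widetilde\alpha$ is genuinely $A^\sigma$ for the same ring automorphism $\sigma$ of $L$ isolated there. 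The remaining steps (the order-commensurability bookkeeping and the lattice computation) are routine given the earlier results.
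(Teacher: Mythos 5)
Your proposal is correct and follows essentially the same route as the paper: describe $\ctlr{\Aut_K(T)}$ as $\GL[m]{L}$ via Proposition~\ref{prop:AutU-ncryst} and Corollary~\ref{cor:Lsigma}, transfer to $\ctlr{\Aut(W)}$ via Lemma~\ref{lem:ctlrK}, choose $\Lambda = \bigoplus_j \psi_j(\Lambda_1)$ so that $\ctlr{\Aut_K(\Lambda)} \cong \GL[m]{\overline{\mathfrak{o}}}$ with $\overline{\mathfrak{o}} = \End_{\hol}(\Lambda_1)$, shrink to the $\Gal(K/\QQ)$-stable order $\mathfrak{o} = \bigcap_\sigma \overline{\mathfrak{o}}^\sigma$ and check finite index via the stabiliser-of-a-finite-index-sublattice argument, then invoke Lemma~\ref{lem:ncryst-commens}. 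The one place you are slightly terser than the paper is in verifying that $\End_{\hol}(\Lambda_1)$ really is an $\mathfrak{o}_K$-order (subring, $\mathfrak{o}_K$-submodule, spans $L$ over $K$, finitely generated by Noetherianity of $\mathfrak{o}_K$)---the paper spells this out rather than leaning on Lemma~\ref{lem:crystallographic}, whose setting is $K = \QQ$---but this is a bookkeeping matter, not a gap.
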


\begin{proof}
Under the identification $\ctlr{\Aut_\RR(W \otimes \RR)} \cong \GL[m]{\KK}^r$, the group $\ctlr{\Aut_\RR(T \otimes_K \RR)}$ corresponds to the first copy of $\GL[m]{\KK}$, with its subgroup $\ctlr{\Aut_K(T)}$ identified, by Proposition~\ref{prop:AutU-ncryst}, with $\GL[m]{L}$ for a division $\KK$-subalgebra $L$, which satisfies the required properties by Proposition~\ref{prop:AutU-ncryst} and Corollary~\ref{cor:Lsigma}. We first aim to find a complete $\mathfrak{o}_K$-lattice $\Lambda \subseteq T$ such that $\ctlr{\Aut_K(\Lambda)}$ corresponds to $\GL[m]{\overline{\mathfrak{o}}}$ under this identification, for some $\mathfrak{o}_K$-order $\overline{\mathfrak{o}}$ in $L$, then replace $\overline{\mathfrak{o}}$ with a $\Gal(K/\QQ)$-invariant $\mathfrak{o}_K$-suborder $\mathfrak{o} \subseteq \overline{\mathfrak{o}}$ and show that $\GL[m]{\mathfrak{o}}$ has finite index in $\GL[m]{\overline{\mathfrak{o}}}$. The result then follows by Lemma~\ref{lem:ncryst-commens}.

In order to define $\Lambda$, let $\Lambda''$ be any complete $\mathfrak{o}_K$-lattice in $U_1$ (e.g.\ $\Lambda'' = (\lat \otimes \mathfrak{o}_K) \cap U_1$). Let $\phi_j\colon U_1 \to U_j$ be an $\hol$-equivariant $K$-linear isomorphism for $1 \leq j \leq m$, and set $\Lambda = \bigoplus_{j=1}^m \phi_j(\Lambda'')$. It is then clear by construction that $\ctlr{\Aut_K(\Lambda)} \cong \GL[m]{\overline{\mathfrak{o}}}$ where $\overline{\mathfrak{o}} \subseteq L$ corresponds to $\End_{\hol}(\Lambda'')$ under the isomorphism $L \cong \End_{\hol}(U_1)$. It is clear that $\overline{\mathfrak{o}}$ is a subring of $L$, and it is an $\mathfrak{o}_K$-submodule of $L$ since $\Lambda''$ is an $\mathfrak{o}_K$-lattice. To see that $\overline{\mathfrak{o}}$ spans $L$ over $K$, it is enough to notice that $\Lambda''$ is finitely generated over $\mathfrak{o}_K$ and spans $U_1$ over $K$, implying that for any $\lambda \in L$ there exists $k \in K$ such that $k\lambda \in \overline{\mathfrak{o}}$. Finally, $\overline{\mathfrak{o}}$ is finitely generated over $\mathfrak{o}_K$ since it is a submodule of $\End(\Lambda'')$ (which is finitely generated since $\Lambda''$ is finitely generated over $\mathfrak{o}_K$) and since the ring $\mathfrak{o}_K$ is Noetherian. Thus $\overline{\mathfrak{o}}$ is an $\mathfrak{o}_K$-order in $L$, as required.

Note that since $\overline{\mathfrak{o}} \subset L$ is an $\mathfrak{o}_K$-order, so is $\overline{\mathfrak{o}}^\sigma \coloneqq \{ \lambda^\sigma \mid \lambda \in \overline{\mathfrak{o}} \}$ for any $\sigma \in \Gal(K/\QQ)$.  Let $\mathfrak{o} = \bigcap_{\sigma \in \Gal(K/\QQ)} \overline{\mathfrak{o}}^\sigma$.  Since each $\overline{\mathfrak{o}}^\sigma$ is a complete $\ZZ$-lattice in the $\QQ$-vector space $L$, so is $\mathfrak{o}$, and in particular $\overline{\mathfrak{o}}$ and $\mathfrak{o}$ have the same rank as abelian groups, and therefore as $\mathfrak{o}_K$-modules.  It follows that $\mathfrak{o}$ is a complete $\mathfrak{o}_K$-lattice; on the other hand, it is clearly a subring of $L$ and thus an $\mathfrak{o}_K$-order.

Finally, we claim that $\GL[m]{\mathfrak{o}}$ has finite index in $\GL[m]{\overline{\mathfrak{o}}}$.  Indeed, note that by looking at $\GL[m]{L}$ as the group of automorphisms of the free $L$-module $L^m$, the subgroups $\GL[m]{\overline{\mathfrak{o}}}$ and $\GL[m]{\mathfrak{o}}$ correspond to setwise stabilisers of the $\mathfrak{o}_K$-lattices $\overline{\mathfrak{o}}^m$ and $\mathfrak{o}^m$, respectively. But by the previous paragraph, $\mathfrak{o}$ has index $d < \infty$ in $\overline{\mathfrak{o}}$, implying that $\mathfrak{o}^m$ has index $d^m$ in $\overline{\mathfrak{o}}^m$.  The group $\GL[m]{\overline{\mathfrak{o}}}$ permutes the $\mathfrak{o}_K$-sublattices of $\overline{\mathfrak{o}}^m$ of index $d^m$, so as there are finitely many such lattices, it follows that $\GL[m]{\mathfrak{o}}$ has finite index in $\GL[m]{\overline{\mathfrak{o}}}$, as claimed.
\end{proof}

\begin{proof}[Proof of Theorem~\ref{thm:commensurable}]
    This follows directly from Corollary~\ref{cor:commens} and Theorem~\ref{thm:commens-1factor}.
\end{proof}

\begin{proof}[Proof of Corollary~\ref{cor:commensurable-orb}]
    This follows directly from Proposition~\ref{prop:Mflat-orb} and Theorem~\ref{thm:commensurable}, together with Theorem~\ref{thm:Bettiol-Derdzinski-Piccione} and the surrounding discussion.
\end{proof}

\begin{rmk}
One may show, even without Standing Assumption~\ref{sta:Schur1}, that any irreducible constituent of the $\hol$-representation $\RR^n$ is realizable over an abelian number field that is a subfield of $\RR$.  Indeed, let $C = \QQ(e^{2\pi\mathbf{i}/o})$, where $o$ is the exponent of $\hol$, and let $U_j' \leq V_j \otimes \RR$ be an irreducible $\hol$-subspace in the above notation. By Brauer's Theorem \cite[Theorem~10.3]{Isaacs}, any representation of $\hol$ splits completely over $C$: that is, if $T$ is an irreducible $\hol$-representation over $C$ then $T \otimes_C \CC$ is irreducible over~$\CC$. In particular, $U_j'$ is realizable over $C$; by \cite[Theorem~1.3]{Pasechnik}, any representation over $C$ that is realizable over $\RR$ is also realizable over $C \cap \RR$, implying that $U_j'$ is realizable over $C \cap \RR$.
\end{rmk}

\bibliographystyle{amsalpha}
\bibliography{ref}

\end{document}